\DeclareMathSymbol{\shortminus}{\mathbin}{AMSa}{"39}
\newcommand{\rnc}{\renewcommand}
\newcommand{\nc}{\newcommand}
\newcommand{\mrm}{\mathrm}
\nc{\mb}{\mathbb}
\nc{\mc}{\mathcal}
\nc{\E}{\mb{E}}
\nc{\N}{\mb{N}}
\nc{\R}{\mb{R}}
\nc{\Q}{\mb{Q}}
\rnc{\P}{\mrm P}
\rnc{\d}{\mrm d}
\nc{\C}{\mc{C}}
\nc{\D}{\mc{D}}
\nc{\B}{\mc{B}}
\nc{\vbeta}{\bm \beta}
\nc{\vtheta}{\bm \theta}
\nc{\vX}{\bm X}
\nc{\vy}{\bm y}
\nc{\vU}{\bm U}
\nc{\vI}{\bm I}
\nc{\vE}{\bm E}
\nc{\ve}{\bm e}
\nc{\vV}{\bm V}
\nc{\vv}{\bm v}
\nc{\vS}{\bm S}
\nc{\vSigma}{\bm \Sigma}
\nc{\oPo}{\stackrel{\mrm p}{\rightarrow}}
\nc{\oWo}{\stackrel{w}{\rightarrow}}
\nc{\oDo}{\stackrel{d}{\longrightarrow}}
\nc{\eff}{\|F\|}
\def\E{{ E }}
\def\R{{ \mathbb{R} }}
\def\N{{ \mathbb{N} }}
\def\P{ P }
\def\E{ E }
\newtheorem{lemma}{Lemma}
\newtheorem{assump}{Assumption}
\newtheorem{theorem}{Theorem}
\newtheorem{prop}{Proposition}
\newtheorem{remark}{Remark}
\newcommand\blfootnote[1]{%
  \begingroup
  \renewcommand\thefootnote{}\footnote{#1}%
  \addtocounter{footnote}{-1}%
  \endgroup
} % Fuer Fussnoten ohne Nummer
\begin{document}

\title{\Large \bf
QANOVA: Quantile-based Permutation Methods For General Factorial Designs}
\author[1,$*$]{Marc Ditzhaus}
\author[1]{Roland Fried}
\author[1]{Markus Pauly}

\affil[1]{Department of Statistics, TU Dortmund University, Germany.}

\maketitle

\begin{abstract}
\blfootnote{${}^*$ e-mail: marc.ditzhaus@tu-dortmund.de}
 Population means and standard deviations are the most common estimands to quantify effects in factorial layouts. In fact, most statistical procedures in such designs are built towards inferring means or contrasts thereof. For more robust analyses, we consider the population median, the  interquartile range (IQR) and more general quantile combinations as estimands in which we formulate null hypotheses and calculate compatible confidence regions.  Based upon simultaneous multivariate central limit theorems and corresponding resampling results, we derive asymptotically correct procedures in general, potentially heteroscedastic, factorial designs with univariate endpoints. Special cases cover robust tests for the population median or the IQR in arbitrary crossed one-, two- and higher-way layouts with potentially heteroscedastic error distributions. In extensive simulations we analyze their small sample properties and also conduct an illustrating data analysis comparing children's height and weight from different countries.

\end{abstract}

\noindent{\bf Keywords:} Birth Cohorts, IQR, Main and Interaction Effects, Median, Permutation Tests

%\vfill
%\vfill

\section{Introduction}

Factorial designs are popular in various fields such as ecology, biomedicine and psychology \citep{gissi:1990,baigent:etal:1998,cassidy:etal:2008,mehta:etal:2010,kurz:etal:2015} as they us allow to study interaction effects between different factors alongside their main effects. In fact, \citet{lubsen:pocock:1994} pointed out that  ``\textit{it is desirable for reports of factorial trials to include estimates of the interaction between the treatments}''. The ANOVA-$F$-test is the most common tool for this but suffers from restrictive assumptions such as homoscedasticity and normality.  Thus, several tests have been developed that allow for non-normal errors or are valid for heteroscedastic one- and two-way or even more general factorial designs \citep{johansen:1980,brunner:dette:munk:1997,bathke:schabenberger:tobias:madden:2009,zhang:2012, paulyETAL2015, friedrich2017permuting, friedrich2017gfd, harrar2019comparison}.

All these procedures describe effects by (contrasts of) means. This is in line with a phenomenon observed in various areas: comparisons are mainly based upon means or variances but not on their robust counterparts. This can be explained in part by the simplicity and elegance gained by using linear or, under independence, additive statistics. Nevertheless it contradicts the important role of statistics based on quantiles, like the median and the interquartile range (IQR), in data exploration and modeling, e.g. in boxplots or summary statistics. The interest in analyzing quantiles has lead to the development of quantile regression, which is commonly established nowadays \citep{koenker2001quantile, koenker2012quantile}. However, as, e.g., stressed by \citet{beyerlein2014quantile} ``{\it it appears to be quite underused in medical research}''. One reason may be that, although there exist several approaches for specific designs \citep{sen1962,potthoff1963,fung1980,hettmanspergerMcKean2010, fried2011robust, chungRomano2013}, there does not exist an equal abundance of methods based on quantiles for general factorial designs.
There are procedures, at least for the median, but they often require strong distributional assumptions (as symmetry) or, at least, an extension to factorial designs is missing.
Therefore the {\it main  aims} of the present paper are to develop inference procedures (tests and compatible confidence regions)
\begin{itemize}
	\item[(i)] for the median, the interquartile range (IQR) or any linear combination of quantiles.
	\item[(ii)] within the framework of factorial designs to study robust main and interaction effects.
	\item[(iii)] for general heterogeneous or heteroscedastic models beyond normality.%  and for possibly skewed data
	\item[(iv)] that are theoretically valid and posses a satisfactory finite sample performance.
	%\item and leading to compatible confidence regions.
\end{itemize}
To achieve these goals, we combine and extend the ideas of \citet{chungRomano2013} (who derive tests for equality of medians in one-way ANOVA models) and \citet{paulyETAL2015} (who establish mean-based testing procedures in general factorial designs) to (simultaneously) infer arbitrary linear contrasts of general quantiles. In view of (ii) and (iv) we thereby follow the idea of {\it permuting studentized Wald-type statistics}
to obtain methods that are finitely correct in case of exchangeable data (e.g., under the null hypothesis of equal means / medians in the classic $F$-ANOVA normal model) {\it but also} asymptotically valid for general non-exchangeable settings. This alluring technique has originally been developed for special two-sample models \citep{neuhaus:1993,janssen:1997studentized,janssenPauls2003,pauly:2011:discussion} and has recently displayed its full strength to obtain accurate methods in one-way \citep{chungRomano2013} and more general factorial designs \citep{paulyETAL2015, friedrich2017permuting, smaga2017diagonal, umlauft2017rank, harrar2019comparison}.

However, to derive the fore-mentioned theoretical evidence in our general quantile-based approaches we could not employ the methods derived in the previously mentioned papers. In fact, to overcome some technical difficulties that occur when jointly permuting sample quantiles, we had to take a detour in which we extended some results for general permutation empirical processes and uniform Hadamard differentiability \citep{vaartWellner1996} that are of own mathematical interest. Anyhow, this finally results in (i)-(iv), i.e., a flexible toolbox for inferring contrasts of different quantiles in factorial designs. In the special case of the median and its bootstrap-based variance estimator we obtain the one-way permutation test derived in \citet{chungRomano2013}.

The paper is organized as follows: We first introduce the model, estimators for population quantiles and how to formulate null hypotheses in them to test for certain main or interaction effects. In Section \ref{sec:asy_results}, we state the theory to handle the joint asymptotics for sample quantiles and their covariance matrix estimators. As the latter are crucial to obtain the correct dependency structure necessary in the aforementioned studentization we study three different approaches based upon kernel density estimators, bootstrapping or certain interval estimates. As they are mostly only known for the sample median, these considerations require certain extension to our more general situations which are explained in Sections \ref{sec:kernel}--\ref{sec:boot}. From these findings we then deduce three different asymptotically valid testing procedures. To improve their small sample performance, we consider their respective permutation versions in Section \ref{sec:perm}, prove asymptotic exactness 
and also analyze their power under local and fixed alternatives.
%. Here, a crucial part is estimating the unknown variance of the sample quantiles as well as the covariances between them.
%Different methods for estimating the asymptotic sample median variance are known in the literature, e.g.,
%the bootstrap estimator of \citet{efron1979} and the  of \citet{priceBonett2001}. Since the variance just depends on a multitude of the unknown, underlying density, \citep{nadaraya:1965,schuster:1969,silverman:1978} can also be considered for the sample median's variance estimation.
%In Sections \ref{sec:kernel}--\ref{sec:boot}, we explain how to extend all these estimation strategies to.
To compare the small sample behavior of the resulting six tests, we conducted extensive simulations presented in Section \ref{sec:sim}. Finally, we illustrate the new methodology by analyzing a recent data set on the
height and weight of children in different countries in Section \ref{sec:real_data}. All proofs are deferred to the appendix, where also some additional simulation results are presented.

\section{The set-up}
We consider a general model given by mutually independent random variables
\begin{align}\label{eq:model}
X_{ij} \sim F_i \quad (i=1,\ldots,k;j=1,\ldots,n_i)
\end{align}
with absolutely continuous distribution functions $F_i$ and corresponding densities $f_i$. This set-up allows the incorporation of divers factorial structure by adequately splitting up indices. To accept this consider for example a two-way design with factors $A$ (possessing $a$ levels) and $B$ (having $b$ levels). Setting $k=a\cdot b$ we split up the group index $i$ into $i=(i_1,i_2)$ and model observations as $X_{i_1i_2j}\sim F_{i_1i_2}$ with $i_1=1,\ldots,a$ and $i_2=1,\ldots,b$. Factorial designs of more complexity can be incorporated similarly, see, e.g., \citet{paulyETAL2015}.

Having the model fixed we now turn to the parameters of interest: Choosing $m\in\N$ different probabilities $0<p_1<\ldots <p_m<1$ we want to study inference methods for the corresponding quantiles
\begin{align}\label{eqn:def_quantiles}
q_{ir}=F_i^{-1}(p_r) = \inf\{ t\in \R : F_i(t)\geq p_r\} \quad (i=1,\ldots,k;r=1,\ldots,m).
\end{align}
Pooling them in the vector $\mathbf{q}=(\mathbf{q}_1',\dots,\mathbf{q}_k')'=(q_{11},\ldots,q_{1m},q_{21},\ldots,q_{km})' $, we are particularly interested in testing the {\it QANOVA} null hypothesis $\mathcal H_0: \mathbf{H} \mathbf{q} = \mathbf{0}_r$ for a contrast matrix $\mathbf{H}\in \R^{r\times km}$ of interest. Here, $\mathbf{H}$ is called a contrast matrix if $\mathbf H \mathbf{1}_{km} = \mathbf{0}_r$ holds, where $\mathbf{1}_d$ and $\mathbf{0}_d$ are vectors of length $d$ consisting of $1$'s and $0$'s only, and  $\mathbf{A}'$ denotes the transpose of the matrix $\mathbf{A}$.
Choosing the contrast matrices in line with the design and the question of interest allows us to test various hypotheses about main and interaction effects, see Subsection~\ref{sec:con+matr} below.
Moreover, we want to point out that respective confidence regions for corresponding contrasts of quantiles can be obtained straightforwardly by inverting the test procedures. In what follows we will therefore focus on hypothesis testing but provide some exemplary confidence intervals in the context of the illustrative data analyses given in Section~\ref{sec:real_data}.
Turning back to the null hypothesis $\mathcal H_0:  \mathbf{H} \mathbf{q} = \mathbf{0}_r$ we recall from general ANOVA that it is convenient to re-formulate it as
$\mathcal H_0:  \mathbf{T} \mathbf{q} = \mathbf{0}_{km}$ for the unique projection matrix $\mathbf{T}=\mathbf{H}'(\mathbf{HH}')^+\mathbf H$, see, e.g.,  \citet{brunner:dette:munk:1997,paulyETAL2015,smaga2017diagonal}. Here, $\mathbf{A}^+$ denotes the Moore--Penrose inverse of the matrix $\mathbf{A}$. In fact, both matrices, $\mathbf{H}$ and $\mathbf{T}$, describe the same null hypothesis while $\mathbf{T}$ has preferable properties as being symmetric and idempotent.
To infer $\mathcal H_0$ we propose sensitive test statistics in the vector of corresponding sample quantiles. To introduce them, let
\begin{align*}
\widehat F_{i}(t)= n_i^{-1}\sum_{j=1}^{n_i}{1}\{X_{ij}\leq t\} \quad \text{and}\quad \widehat F(t)= n^{-1}\sum_{i=1}^k\sum_{j=1}^{n_i}{1}\{X_{ij}\leq t\},
\end{align*}
denote the group-specific and pooled empirical distribution function, respectively,
where $n=\sum_{i=1}^kn_i$ is the pooled sample size. %and $1\{\cdot\}$ is the indicator function.
Then the natural estimator of the quantile $q_{ir}$ is
\begin{align}\label{eqn:def_qhat}
\widehat q_{ir} = \widehat F_i^{-1}(p_r) = \inf\{ t\in \R : \widehat F_i(t)\geq p_r\} = X_{\lceil n_ip_r\rceil:n_i}^{(i)} 
\end{align}
for $i=1,\ldots,k;r=1,\ldots,m$, where $X_{1:n_i}^{(i)}\leq \ldots\leq X_{n_i:n_i}^{(i)}$ are the order statistics of group $i$.% and $\lceil x \rceil = \min\{n\in\N: x\leq n\} $.
\subsection{Examples of specific hypotheses} \label{sec:con+matr}
To give some examples of hypotheses covered within this framework we first consider a one-way design. For $m=1$, we obtain the $k$-sample null hypothesis of equal $p_1$-quantiles
\begin{itemize}
	\item \textit{No group effect:} $\mathcal H_0=\{\mathbf{P}_{k}\mathbf{q} = \mathbf{0}_{k} \} =\{ q_{1}=\ldots=q_{k}\}$ with $ \mathbf{P}_k =  \mathbf I_k - \mathbf{J}_k/k$.
\end{itemize}
Here, $\mathbf{I}_k\in\R^{k\times k}$ denotes the unit matrix, $\mathbf{J}_k=\mathbf{1}_k\mathbf{1}_k'$ and we suppressed the second index of the quantiles ($m=1$). Choosing $p_1=1/2$ gives the null hypothesis of equal medians which reduces to the null hypothesis of equal means in case of symmetric error distributions. Setting $k=ab$, we consider a two-way design with factors A (having levels $i_1=1,\dots,a$) and B (with levels $i_2=1,\dots,b$) and suppose that we like to formulate main and interaction effects in terms of quantiles, e.g. medians. The corresponding three null hypotheses are 
\begin{itemize}
	\item \textit{No main effect of A:} $\mathcal H_0 = \{\mathbf{H}_{A}\mathbf{q} = \mathbf{0}_{ab} \} = \{ \bar{q}_{1\cdot} = \ldots = \bar{q}_{a\cdot}\}$ with $\mathbf{H}_{A}= \mathbf P_a \otimes (\mathbf J_b/b)$,
	
	\item \textit{No main effect of B}: $\mathcal H_0 = \{\mathbf{H}_{B}\mathbf{q} = \mathbf{0}_{ab} \} = \{ \bar{q}_{\cdot1} = \ldots = \bar{q}_{\cdot b}\}$ with $\mathbf{H}_B = (\mathbf J_a/a) \otimes \mathbf P_b$,
	
	\item \textit{No interaction effect}: $\mathcal H_0 = \{\mathbf{H}_{AB}\mathbf{q} = \mathbf{0}_{ab} \} = \{
	\bar{q}_{\cdot \cdot}- \bar{q}_{\cdot i_2} - \bar{q}_{i_1\cdot} + {q}_{i_1i_2}   \equiv 0\}  $ with $\mathbf{H}_{AB} = \mathbf P_a \otimes \mathbf P_b$.
\end{itemize}
Here, $\otimes$ is the Kronecker product and $\bar{q}_{i_1 \cdot}$, $\bar{q}_{\cdot i_2}$ and $\bar{q}_{\cdot \cdot}$ are the means over the dotted indices. 
%As in mean-based inference for linear models 
The latter hypotheses can also be described more lucid by utilizing an additive effects notation. To this end, we decompose the quantile $q_{i_1i_2} = q^\mu + q^\alpha_{i_1} + q^\beta_{i_2} + q^{\alpha\beta}_{i_1i_2}$ from group $(i_1,i_2)$
into a general effect $q^\mu$, main effects
$q^\alpha_{i_1}$ and $q^\beta_{i_2}$ % corresponding to level $i_1$ and $i_2$ of factor $A$ and $B$, respectively, 
as well as an interaction effect $q^{\alpha\beta}_{i_1i_2}$ assuming the usual side conditions $\sum_{i_1} q^\alpha_{i_1} = \sum_{i_2} q^\beta_{i_2} =  \sum_{i_1} q^{\alpha\beta}_{i_1i_2} =
\sum_{i_2} q^{\alpha\beta}_{ij} = 0$. Then the null hypotheses can be written as
$\{\mathbf{H}_{A}\mathbf{q} = \mathbf{0}_{ab} \} = \{q^\alpha_1=\dots q^\alpha_a=0\}$ or
$\{\mathbf{H}_{AB}\mathbf{q} = \mathbf{0}_{ab} \} = \{q^{\alpha\beta}_{i_1i_2} \equiv 0 \text{ for all } i_1,i_2 \}$.
Beyond working with specific quantiles it is also possible to infer hypotheses about
linear combinations $\mathbf{c}' \mathbf{q}_i = \sum_{r=1}^mc_rq_{ir}$ of quantiles. Here, $\mathbf{c}\in\mathbf{R}^k$ is an arbitrary vector, e.g.   choosing $c_1=-c_2=-1$ for $m=2$ and setting $p_1=0.25$ and $p_2=0.75$ leads to the group-specific interquartile ranges $\mathbf{c}' \mathbf{q}_i = IQR_i$. To obtain similar hypothesis in these parameters as above the
contrast matrix has to be specified to $\mathbf{\widetilde H}= \mathbf{ H} \otimes (c_1,\ldots,c_r)$, where $\mathbf{H}$ is one of the contrast matrices introduced above. For example, $\mathbf{H} = \mathbf{P}_{k}$ together with the previous choices for $\mathbf{c}$ and $p_1,p_2$ gives the null hypothesis $\{IQR_1=\dots=IQR_k\}$ of equal IQRs among all $k$ groups. However, the framework is much more flexible and even allows to infer hypotheses about IQRs and medians simultaneously by choosing $p_1=0.5$, $p_2=0.25$ and $p_3=0.75$  together with adequate contrast matrices.

\section{Asymptotic results}\label{sec:asy_results}
To establish the joined asymptotic theory for the sample quantiles and their covariance matrix estimators we assume non-vanishing groups throughout, i.e., as $\min(n_i:i=1,\ldots,k)\to\infty$
\begin{align}
&\frac{n_i}{n}\to\kappa_i >0\label{eqn:ni_n_kappai}.
\end{align}
%	\quad \text{or }\quad& 0<\liminf_{n\to \infty} (n_i/n)\leq \limsup_{n\to \infty} (n_i/n)<1,\label{eqn:ni_n_general_limsup}
%and note that the subsequent results of this section are valid under the less stringent assumption $ 0<\liminf_{n\to \infty} (n_i/n)\leq \limsup_{n\to \infty} (n_i/n)<1$ by turning to subsequences.
%. Our main result, Theorem \ref{theo:teststat_uncon}, is valid under the general assumption \eqref{eqn:ni_n_general_limsup}, whereas the intermediate statements are proven under \eqref{eqn:ni_n_kappai}.
Recall that the sample median will be asymptotically normal if the underlying density is positive and continuous in a neighbourhood of the true median. This statement can be extended to the multivariate case \citep{serfling2009}, e.g., under the following assumption, which we consider throughout.
\begin{assump}\label{ass:densities}
	Let $F_i$ be continuously differentiable at $q_{ir}$ with positive derivative $f_i(q_{ir})>0$ for every $r=1,\ldots,m$ and $i=1,\ldots,k$.
\end{assump}
\begin{prop}[Theorem B in Sec. 2.3.3 of \citet{serfling2009}]\label{prop:uncond}
	For $i\in\{1,\ldots,k\}$
	\begin{align}
	\sqrt{n}\Bigl( \widehat q_{ir} -  q_{ir}\Bigr)_{r=1,\ldots,m} \overset{\mathrm d}{\longrightarrow} \mathbf{Z}_i,
	\end{align}
	where $\mathbf{Z}_i$ is a zero-mean, multivariate normal distributed random variable with nonsingular covariance matrix $\mathbf{\Sigma}^{(i)}$ given by its entries
	\begin{align}\label{eqn:def_sigmai}
	\mathbf{\Sigma}^{(i)}_{ab} = \kappa_i^{-1}\frac{1}{f_i(q_{ia})f_i(q_{ib})}( p_a\wedge p_b - p_ap_b )\qquad (a,b\in\{1,\ldots,m\}).
	\end{align}
\end{prop}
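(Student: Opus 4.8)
The plan is to reduce the joint asymptotics of the sample quantiles within group $i$ to a multivariate central limit theorem for the group's empirical distribution function, and then to transport that limit through a first-order (Bahadur-type) linearization. The starting point is the duality $\{\widehat q_{ir}\le x\}=\{\widehat F_i(x)\ge p_r\}$, which holds for the generalized inverse in \eqref{eqn:def_qhat} because $\widehat F_i$ is nondecreasing and right-continuous. Fixing $t_1,\dots,t_m\in\R$ and writing $x_{ir}=q_{ir}+t_r/\sqrt{n_i}$, I would rewrite the joint distribution function of the normalized quantiles as
\begin{align*}
\P\Bigl(\sqrt{n_i}(\widehat q_{ir}-q_{ir})\le t_r\ \forall r\Bigr)=\P\Bigl(\widehat F_i(x_{ir})\ge p_r\ \forall r\Bigr)=\P\Bigl(U_{ir}\ge a_{ir}\ \forall r\Bigr),
\end{align*}
where $U_{ir}=\sqrt{n_i}(\widehat F_i(x_{ir})-F_i(x_{ir}))$ and $a_{ir}=\sqrt{n_i}(p_r-F_i(x_{ir}))$.

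The argument then rests on two ingredients. First, by the differentiability of $F_i$ at $q_{ir}$ (Assumption~\ref{ass:densities}) and $F_i(q_{ir})=p_r$, the deterministic thresholds satisfy $a_{ir}=-\sqrt{n_i}\bigl(F_i(q_{ir}+t_r/\sqrt{n_i})-F_i(q_{ir})\bigr)\to -f_i(q_{ir})t_r$. Second, the vector $(U_{ir})_{r}$ is a normalized sum of i.i.d., bounded, mean-zero triangular-array terms $\mathbf 1\{X_{ij}\le x_{ir}\}-F_i(x_{ir})$; since $x_{ir}\to q_{ir}$ and $F_i$ is continuous there, the covariances converge, $\operatorname{Cov}(U_{ir},U_{is})=F_i(x_{ir}\wedge x_{is})-F_i(x_{ir})F_i(x_{is})\to p_r\wedge p_s-p_rp_s$, and the Lindeberg condition holds trivially by boundedness. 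Hence $(U_{ir})_{r}\oDo W\sim N(0,\mathbf\Gamma^{(i)})$ with $\mathbf\Gamma^{(i)}_{rs}=p_r\wedge p_s-p_rp_s$.

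Combining the two ingredients and using the continuity of the multivariate normal distribution function to pass $\ge$ to the limiting event, the displayed probability converges to $\P\bigl(W_r\ge -f_i(q_{ir})t_r\ \forall r\bigr)=\P\bigl(-W_r/f_i(q_{ir})\le t_r\ \forall r\bigr)$, which is precisely the joint distribution function of the vector with entries $-W_r/f_i(q_{ir})$. This identifies the $\sqrt{n_i}$-normalized limit as $N(0,\widetilde{\mathbf\Sigma})$ with $\widetilde{\mathbf\Sigma}_{rs}=\mathbf\Gamma^{(i)}_{rs}/(f_i(q_{ir})f_i(q_{is}))$. Since the $p_r$ are distinct in $(0,1)$, $\mathbf\Gamma^{(i)}$ is positive definite (it is the covariance matrix of the linearly independent indicators $\mathbf 1\{U\le p_r\}$ for $U\sim\mathrm{Unif}(0,1)$), and the positive diagonal scaling by the $1/f_i(q_{ir})$ preserves nonsingularity. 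Finally, replacing the $\sqrt{n_i}$ normalization by $\sqrt n=\kappa_i^{-1/2}\sqrt{n_i}(1+o(1))$ from \eqref{eqn:ni_n_kappai} multiplies the covariance by $\kappa_i^{-1}$ and yields exactly $\mathbf\Sigma^{(i)}$ in \eqref{eqn:def_sigmai}.

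I expect the only delicate point to be the triangular-array empirical central limit theorem at the data-dependent points $x_{ir}$ together with the bookkeeping in passing between the event $\{\widehat F_i(x_{ir})\ge p_r\}$ and the limiting Gaussian event; both are controlled by the boundedness of the indicators and the continuity of the normal law, while everything else is a direct computation. A more structural alternative, which is also the route extended later in the paper for the permutation versions, is the functional delta method: the map $G\mapsto(G^{-1}(p_r))_{r}$ is Hadamard differentiable at $F_i$ tangentially to functions continuous at the $q_{ir}$, with derivative $h\mapsto\bigl(-h(q_{ir})/f_i(q_{ir})\bigr)_{r}$, so that the result follows by composing this derivative with the Donsker limit of $\sqrt{n_i}(\widehat F_i-F_i)$.
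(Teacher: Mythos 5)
Your proof is correct, but it takes a genuinely different route from the paper. The paper's proof is functional-analytic: it invokes the Donsker theorem $\sqrt{n_i}(\widehat F_i - F_i) \oDo B \circ F_i$ on $D(\R)$, Hadamard differentiability of the quantile map $\Phi_p$ at $F_i$ (Lemma 3.9.20 of van der Vaart and Wellner), and the functional delta method, and then proves nonsingularity of $\mathbf{\Sigma}^{(i)}$ by transforming the Brownian bridge into a Brownian motion, $B(t) \overset{\mathrm d}{=} (1-t)W(t/(1-t))$, and computing the determinant of $(\min(t_a,t_b))_{a,b}$ explicitly. You instead argue elementarily via the duality $\{\widehat q_{ir}\le x\}=\{\widehat F_i(x)\ge p_r\}$, a triangular-array Lindeberg CLT for the indicator sums at the perturbed points $x_{ir}=q_{ir}+t_r/\sqrt{n_i}$, and Portmanteau to pass to the limiting orthant probability; your nonsingularity argument (no nontrivial linear combination of the centered indicators $\mathbf 1\{U\le p_r\}$, $0<p_1<\dots<p_m<1$, is a.s.\ constant) is cleaner than the paper's determinant computation, though stated slightly loosely—what is needed is that the indicators together with the constant function are linearly independent in $L^2$, which your distinctness-of-the-$p_r$ observation does deliver. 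What each approach buys: yours is self-contained and avoids empirical process theory entirely, exactly in the spirit of Serfling's original Theorem B; the paper's heavier machinery is a deliberate investment, since the Donsker-plus-Hadamard-differentiability framework is precisely what gets upgraded (to permutation empirical processes and \emph{uniform} Hadamard differentiability) in the proofs of Theorems \ref{theo:stat_perm} and \ref{theo:local_alt}, where your direct computation would not transfer—as you yourself correctly anticipate in your closing remark about the "more structural alternative."
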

%{\color{red} Wir haben beweisen, dass die Matrix regul{\"a}r ist. Sollen wir das Ergebnis extra anf{\"u}hren?!}
In general, the covariance matrix is unknown and, thus, needs to be estimated.
However, let us suppose, for a moment, that we have chosen a consistent estimator $\mathbf{\widehat \Sigma}^{(i)}$ for $\mathbf{\Sigma}^{(i)}$. Then we could already define  a {\it Wald-type statistic} for testing $\mathcal{H}_0: \mathbf{T} \mathbf{q} = \mathbf{0}_r$
\begin{align}\label{eq:WTS}
S_n(\mathbf{T}) = n( \mathbf{T} \mathbf{ \widehat q})' ( \mathbf{T} \mathbf{ \widehat \Sigma} \mathbf{T}' )^{+} \mathbf{T} \mathbf{ \widehat q}\quad \text{with }\mathbf{ \widehat\Sigma} = \bigoplus_{i=1}^k\mathbf{ \widehat\Sigma }^{(i)},
\end{align}
where $\oplus$ denotes the direct sum. By Proposition \ref{prop:uncond} the limiting covariance matrix $\mathbf{\Sigma} = \oplus_{i=1}^k\mathbf{ \Sigma}^{(i)}$ is positive definite which implies that the Moore--Penrose inverse $( \mathbf{T} \mathbf{ \widehat \Sigma} \mathbf{T}' )^+$ converges in probability to $( \mathbf{T} \mathbf{\Sigma} \mathbf{T}' )^+$. Thus, $S_n(\mathbf{T})$ converges to $Z=\mathbf{Y}' ( \mathbf{T} \mathbf{\Sigma} \mathbf{T}' )^+ \mathbf{Y}$ in distribution under $\mathcal{H}_0$, where $\mathbf{Y}\sim N(\mathbf{0},  \mathbf{T} \mathbf{\Sigma} \mathbf{T}')$. Moreover, the limit $Z=\mathbf{Y}' ( \mathbf{T} \mathbf{ \Sigma} \mathbf{T}' )^+ \mathbf{Y}$ is chi-square distributed with $\text{rank}( \mathbf{T} \mathbf{ \Sigma} \mathbf{T}') = \text{rank}(\mathbf{T} \mathbf{ \Sigma}^{1/2}) = \text{rank}(\mathbf{T})$ degrees of freedom \citep[Theorem 9.2.2]{rao:mitra:1971}. We summarize this as
\begin{theorem}\label{theo:teststat_uncon}
	Under $\mathcal H_0: \mathbf{T}\mathbf{q} = \mathbf{0}_r$, $S_n(\mathbf{T})$ converges in distribution to $Z\sim\chi^2_{\text{rank}(\mathbf{T})}$.
\end{theorem}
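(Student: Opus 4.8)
The plan is to reduce the whole statement to a continuous-mapping argument applied to the joint Gaussian limit of the sample quantiles, with the studentization handled by a separate in-probability convergence of the covariance estimator.

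First I would assemble the full-vector limit. The observations are mutually independent across groups by model \eqref{eq:model}, and within each group Proposition~\ref{prop:uncond} gives $\sqrt{n}(\widehat q_{ir}-q_{ir})_{r}\oDo \mathbf{Z}_i$. Stacking over $i$ and invoking independence yields $\sqrt{n}(\mathbf{\widehat q}-\mathbf{q})\oDo \mathbf{Z}\sim N(\mathbf{0},\mathbf{\Sigma})$ with the block-diagonal $\mathbf{\Sigma}=\bigoplus_{i=1}^k\mathbf{\Sigma}^{(i)}$, which is positive definite because each $\mathbf{\Sigma}^{(i)}$ is nonsingular. Under $\mathcal H_0$ one has $\mathbf{T}\mathbf{q}=\mathbf{0}$, so $\mathbf{T}\mathbf{\widehat q}=\mathbf{T}(\mathbf{\widehat q}-\mathbf{q})$, and applying the fixed linear map $\mathbf{T}$ to the Gaussian limit gives $\sqrt{n}\,\mathbf{T}\mathbf{\widehat q}\oDo \mathbf{Y}:=\mathbf{T}\mathbf{Z}\sim N(\mathbf{0},\mathbf{T}\mathbf{\Sigma}\mathbf{T}')$.

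Next I would treat the plug-in of $\mathbf{\widehat\Sigma}$, which is the only genuine subtlety: the Moore--Penrose inverse is \emph{not} continuous in general, but only at matrices whose rank is locally preserved. Here I would exploit positive definiteness of $\mathbf{\Sigma}$. Granting the assumed consistency $\mathbf{\widehat\Sigma}\oPo\mathbf{\Sigma}$, the estimator $\mathbf{\widehat\Sigma}$ is positive definite with probability tending to one, whence $\text{rank}(\mathbf{T}\mathbf{\widehat\Sigma}\mathbf{T}')=\text{rank}(\mathbf{T}\mathbf{\widehat\Sigma}^{1/2})=\text{rank}(\mathbf{T})=\text{rank}(\mathbf{T}\mathbf{\Sigma}\mathbf{T}')$, so the rank is stable along the convergent sequence. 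The constant-rank continuity theorem for the Moore--Penrose inverse then delivers $(\mathbf{T}\mathbf{\widehat\Sigma}\mathbf{T}')^{+}\oPo(\mathbf{T}\mathbf{\Sigma}\mathbf{T}')^{+}$. I expect this rank-stability step to be the main obstacle, since continuity of the pseudoinverse cannot simply be asserted and must be earned from the positive definiteness of the limiting covariance.

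Combining the two convergences through Slutsky's lemma and the continuous mapping theorem applied to the quadratic form gives
\begin{align*}
S_n(\mathbf{T})=(\sqrt{n}\,\mathbf{T}\mathbf{\widehat q})'(\mathbf{T}\mathbf{\widehat\Sigma}\mathbf{T}')^{+}(\sqrt{n}\,\mathbf{T}\mathbf{\widehat q})\oDo \mathbf{Y}'(\mathbf{T}\mathbf{\Sigma}\mathbf{T}')^{+}\mathbf{Y}=:Z.
\end{align*}
Finally I would identify the law of $Z$. Since $\mathbf{Y}\sim N(\mathbf{0},\mathbf{T}\mathbf{\Sigma}\mathbf{T}')$, the quadratic-form result \citep[Theorem 9.2.2]{rao:mitra:1971} shows $Z\sim\chi^2_{\nu}$ with $\nu=\text{rank}(\mathbf{T}\mathbf{\Sigma}\mathbf{T}')$. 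As $\mathbf{\Sigma}$ is nonsingular and $\mathbf{\Sigma}^{1/2}$ therefore invertible, $\text{rank}(\mathbf{T}\mathbf{\Sigma}\mathbf{T}')=\text{rank}(\mathbf{T}\mathbf{\Sigma}^{1/2})=\text{rank}(\mathbf{T})$, which yields the claimed degrees of freedom and completes the argument.
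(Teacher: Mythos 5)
Your proposal is correct and follows essentially the same route as the paper: joint Gaussian limit of $\sqrt{n}(\mathbf{\widehat q}-\mathbf{q})$ from Proposition~\ref{prop:uncond}, plug-in consistency of $\mathbf{\widehat\Sigma}$, convergence of the Moore--Penrose inverse, and identification of the limit as $\chi^2_{\text{rank}(\mathbf{T})}$ via \citet[Theorem 9.2.2]{rao:mitra:1971} together with $\text{rank}(\mathbf{T}\mathbf{\Sigma}\mathbf{T}')=\text{rank}(\mathbf{T}\mathbf{\Sigma}^{1/2})=\text{rank}(\mathbf{T})$. Your only departure is that you spell out the rank-stability justification for continuity of the pseudoinverse, a step the paper asserts in one line as a consequence of positive definiteness of $\mathbf{\Sigma}$; this is a welcome refinement rather than a different argument.
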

Thus, comparing $S_n(\mathbf{T})$ with the $(1-\alpha)$-quantile  of the limiting null distribution defines an asymptotic exact level $\alpha$ test
$\varphi_n = 1\{S_n(\mathbf{T})> \chi^2_{\text{rank}(\mathbf{T}), 1-\alpha}\}$. %{RF: sonst nennt Ihr das $\chi^2_{\text{rank}(\mathbf{T}), 1-\alpha}$}
As Proposition \ref{prop:uncond} is not restricted to the null hypothesis, we can even deduce that $n^{-1}S_n(\mathbf{T})$ always converges in probability to $( \mathbf{T} \mathbf{ q})' ( \mathbf{T} \mathbf{  \Sigma} \mathbf{T} )^+ \mathbf{T} \mathbf{  q}$. Since $\mathbf{T}\mathbf{q}\neq \mathbf{0}_{km}$ implies $( \mathbf{T} \mathbf{ q})' ( \mathbf{T} \mathbf{  \Sigma} \mathbf{T}' )^+ \mathbf{T} \mathbf{  q}>0$ (see the Supplement for a verification) consistency follows.
\begin{theorem}\label{theo:teststat_consist}
	Under $\mathcal H_1: \mathbf{T}\mathbf{q} \neq \mathbf{0}_r$, $S_n(\mathbf{T})$ converges in probability to $\infty$.
\end{theorem}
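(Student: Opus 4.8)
The plan is to exploit the convergence of the rescaled statistic $n^{-1}S_n(\mathbf{T})$ already anticipated in the text and then multiply by the diverging factor $n$. First I would assemble the two consistency ingredients. Proposition~\ref{prop:uncond} is not tied to $\mathcal H_0$ and gives $\sqrt{n}(\mathbf{\widehat q}-\mathbf{q})=O_{\P}(1)$, so that $\mathbf{\widehat q}\oPo\mathbf{q}$ and hence $\mathbf{T}\mathbf{\widehat q}\oPo\mathbf{T}\mathbf{q}$. Moreover, the assumed consistency $\mathbf{\widehat\Sigma}\oPo\mathbf{\Sigma}$ together with the positive definiteness of $\mathbf{\Sigma}$ (noted before Theorem~\ref{theo:teststat_uncon}) yields $(\mathbf{T}\mathbf{\widehat\Sigma}\mathbf{T}')^{+}\oPo(\mathbf{T}\mathbf{\Sigma}\mathbf{T}')^{+}$.

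Combining these two facts with the continuous mapping theorem and Slutsky's lemma, I would conclude
\[
n^{-1}S_n(\mathbf{T}) = (\mathbf{T}\mathbf{\widehat q})'(\mathbf{T}\mathbf{\widehat\Sigma}\mathbf{T}')^{+}\mathbf{T}\mathbf{\widehat q}\;\oPo\; c:=(\mathbf{T}\mathbf{q})'(\mathbf{T}\mathbf{\Sigma}\mathbf{T}')^{+}\mathbf{T}\mathbf{q}.
\]
Under $\mathcal H_1$ we have $\mathbf{T}\mathbf{q}\neq\mathbf{0}_{km}$, which by the positivity argument referenced in the text (and verified in the Supplement) forces $c>0$.

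Finally, I would turn the limit of $n^{-1}S_n(\mathbf{T})$ into divergence of $S_n(\mathbf{T})$. Fix an arbitrary $M>0$. For all $n$ large enough that $M/n<c/2$ one has
\[
\P\bigl(S_n(\mathbf{T})\leq M\bigr)=\P\bigl(n^{-1}S_n(\mathbf{T})\leq M/n\bigr)\leq\P\bigl(n^{-1}S_n(\mathbf{T})\leq c/2\bigr),
\]
and the right-hand side tends to $0$ because $n^{-1}S_n(\mathbf{T})\oPo c>c/2$. Thus $\P(S_n(\mathbf{T})>M)\to1$ for every $M>0$, which is precisely the claim $S_n(\mathbf{T})\oPo\infty$.

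The only genuinely delicate step is the pseudoinverse convergence $(\mathbf{T}\mathbf{\widehat\Sigma}\mathbf{T}')^{+}\oPo(\mathbf{T}\mathbf{\Sigma}\mathbf{T}')^{+}$: the Moore--Penrose inverse is discontinuous exactly where the rank drops, so the argument relies on $\mathrm{rank}(\mathbf{T}\mathbf{\widehat\Sigma}\mathbf{T}')$ stabilizing at $\mathrm{rank}(\mathbf{T})$ with probability tending to one. This is ensured here because $\mathbf{\Sigma}$ is positive definite, giving the limit the rank $\mathrm{rank}(\mathbf{T})$, while $\mathrm{rank}(\mathbf{T}\mathbf{A}\mathbf{T}')\leq\mathrm{rank}(\mathbf{T})$ for every $\mathbf{A}$; so the limiting rank cannot be exceeded and, by lower semicontinuity of the rank, cannot be undercut on a set of asymptotically full probability. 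With that point settled, the remaining steps are routine Slutsky-type manipulations and I anticipate no further obstacle.
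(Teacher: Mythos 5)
Your proposal is correct in its outer structure, and that structure coincides with the paper's own reduction: Proposition~\ref{prop:uncond} holds irrespective of the null hypothesis, hence $n^{-1}S_n(\mathbf{T})$ converges in probability to $c=(\mathbf{T}\mathbf{q})'(\mathbf{T}\mathbf{\Sigma}\mathbf{T}')^{+}\mathbf{T}\mathbf{q}$, and once $c>0$ is known, divergence of $S_n(\mathbf{T})$ follows exactly as you argue (your remark on rank stability of the Moore--Penrose inverse is also sound and is the same observation the paper makes before Theorem~\ref{theo:teststat_uncon}). However, there is a genuine gap at the decisive step: you assert $c>0$ ``by the positivity argument referenced in the text (and verified in the Supplement).'' That verification \emph{is} the paper's proof of Theorem~\ref{theo:teststat_consist} --- the entire appendix argument for this theorem consists of showing that $\mathbf{T}\mathbf{q}\neq\mathbf{0}_{km}$ implies $c>0$. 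By citing it, you have assumed precisely the nontrivial content of the statement you were asked to prove.

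The implication is not automatic, because $\mathbf{T}\mathbf{\Sigma}\mathbf{T}'$ is in general singular (its rank is $\mathrm{rank}(\mathbf{T})<km$), and for a singular positive semidefinite matrix $\mathbf{A}$ a nonzero vector $x$ can perfectly well satisfy $x'\mathbf{A}^{+}x=0$: this happens whenever $x$ is orthogonal to the column space of $\mathbf{A}$, since then $\mathbf{A}^{+}x=\mathbf{0}$. So one must show that $\mathbf{T}\mathbf{q}$ is not annihilated by $(\mathbf{T}\mathbf{\Sigma}\mathbf{T}')^{+}$. The paper does this as follows: since $\mathbf{\Sigma}$ is nonsingular, write $\mathbf{q}=\mathbf{\Sigma}^{1/2}\widetilde{\mathbf{q}}$ and set $\mathbf{B}=\mathbf{T}\mathbf{\Sigma}^{1/2}$; the Moore--Penrose identities $(\mathbf{A}'\mathbf{A})^{+}=\mathbf{A}^{+}(\mathbf{A}')^{+}$ and $(\mathbf{A}')^{+}=(\mathbf{A}^{+})'$ give
\begin{align*}
c=(\mathbf{T}\mathbf{q})'(\mathbf{B}\mathbf{B}')^{+}\mathbf{T}\mathbf{q}
=\bigl\|\mathbf{B}^{+}\mathbf{T}\mathbf{q}\bigr\|^{2},
\end{align*}
and this is strictly positive because $\mathbf{B}\mathbf{B}^{+}\mathbf{T}\mathbf{q}=\mathbf{B}\mathbf{B}^{+}\mathbf{B}\widetilde{\mathbf{q}}=\mathbf{B}\widetilde{\mathbf{q}}=\mathbf{T}\mathbf{q}\neq\mathbf{0}_{km}$ (using $\mathbf{A}\mathbf{A}^{+}\mathbf{A}=\mathbf{A}$), so the vector inside the norm cannot vanish. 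Equivalently, one can note that the column space of $\mathbf{T}\mathbf{\Sigma}\mathbf{T}'$ equals that of $\mathbf{T}$ (again because $\mathbf{\Sigma}^{1/2}$ is nonsingular), and the latter contains $\mathbf{T}\mathbf{q}$. Some argument of this kind must be supplied for your proof to be complete.
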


It remains to find appropriate estimators $\mathbf{\widehat \Sigma}^{(i)}$ for the unkown covariance matrices. For that purpose, we examine different strategies: 'Brude force' via plug-in of a kernel density estimator into \eqref{eqn:def_sigmai} or using a different approach that first estimates the diagonal elements $\Sigma_{aa}^{(i)}$ and then employs
their following relationship with the remaining matrix elements:
\begin{align}\label{eqn:def_sigmai_rewrite}
\mathbf{\Sigma}^{(i)}_{ab} = \sqrt{	\mathbf{\Sigma}^{(i)}_{aa} 	\mathbf{\Sigma}^{(i)}_{bb}} \frac{p_a\wedge p_b - p_ap_b }{\sqrt{(p_a-p_a^2)(p_b-p_b^2)}}\qquad (a,b\in\{1,\ldots,m\}).
\end{align}
In the latter case, we consider two ways for estimating the variances $\Sigma_{aa}^{(i)}$: Via bootstrapping \citep{efron1979} or with the interval estimator proposed in \citet{priceBonett2001}.
In the following subsections we explain all three possibilities in detail.

\subsection{Kernel estimator}\label{sec:kernel}
A popular way to estimate densities are so-called kernel density estimators, which are based on a Lebesgue density $K:\R \to [0,\infty)$ with $\int K(x) \,\mathrm{ d } x = 1$ and a bandwidth $h_{n}\to 0$. For more flexibility, we allow for different choices within the groups and thus add the corresponding group index, i.e., we work with $K_i$ and $h_{ni}$. Then, the kernel density estimator for $f_i$ is given by
\begin{align}\label{eqn:def_kernel_estimator}
\widehat f_{K,i}(x) = (n_ih_{ni})^{-1}\sum_{i=1}^{n_i} K_i\Bigl( \frac{x - X_{ij}}{h_{ni}} \Bigr)\quad (i=1,\dots,k).
\end{align}
\citet{nadaraya:1965} proved strong uniform consistency of \eqref{eqn:def_kernel_estimator}:
\begin{align}\label{eqn:kernel_uniform}
\sup_{x\in\R} \Bigl | \widehat f_{K,i}(x) - f_i(x) \Bigr| \to 0 \text{ with probability one}\quad (i=1,\dots,k)
\end{align}
\noindent under the following assumption:
\begin{assump}\label{ass:kernel}
	Let $K_i$ be of bounded variation and $f_i$ be uniformly continuous. Furthermore, suppose that $\sum_{n=1}^\infty \exp(-\gamma n h_{ni}^2)$ converges for any choice of $\gamma$.
\end{assump}
\noindent Here, the convergence of the series $\sum_{n=1}^\infty \exp(-\gamma n h_{ni})$ is, e.g., implied by choosing $h_{n,i} = n_i^{-\theta}$ for some $\theta\in(0,1/2)$. We further note that \citet{schuster:1969} discussed necessary and sufficient conditions for the stated uniform consistency. In particular, all $f_i$ need to be uniformly continuous. Moreover, the conditions on the bandwidths can be weakened when the kernel fulfils additional regularity conditions \citep{silverman:1978}. Anyhow, combining Proposition \ref{prop:uncond} and \eqref{eqn:kernel_uniform} yields consistency of the plug-in covariance matrix estimators.
\begin{lemma}\label{lem:kern_density_estimator}
	Under Assumption \ref{ass:kernel} we have for all $i=1,\dots,k$ and $a,b=1,\dots,m$:
	\begin{align}\label{eqn:def_hat_sigma_kernel}
	\mathbf{\widehat \Sigma}_{ab}^{(i),K}\equiv \frac{n}{n_i}\frac{p_a\wedge p_b - p_ap_b}{\widehat f_{K,i}(\widehat q_{ia})\widehat f_{K,i}(\widehat q_{ib})} \to \mathbf{\Sigma}_{ab}^{(i)}\quad \text{in probability}.
	\end{align}
\end{lemma}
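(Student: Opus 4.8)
The plan is to reduce everything to the single convergence $\widehat f_{K,i}(\widehat q_{ir}) \oPo f_i(q_{ir})$ for each fixed $i$ and $r$; once this holds, the full claim follows from the continuous mapping theorem together with the ratio convergence $n/n_i \to \kappa_i^{-1}$ from \eqref{eqn:ni_n_kappai}. Indeed, the numerator $p_a\wedge p_b - p_ap_b$ is a deterministic constant, and by Assumption \ref{ass:densities} both $f_i(q_{ia})$ and $f_i(q_{ib})$ are strictly positive, so the map $(x,y)\mapsto (p_a\wedge p_b - p_ap_b)/(xy)$ is continuous at the point $(f_i(q_{ia}),f_i(q_{ib}))$. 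Thus all that is substantive is the consistent estimation of the densities at the (estimated) quantiles.

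To obtain the central convergence I would proceed in two steps. First, Proposition \ref{prop:uncond} gives $\sqrt{n}(\widehat q_{ir}-q_{ir})=O_P(1)$, and hence $\widehat q_{ir}\oPo q_{ir}$ for every $r$. Second, I would decompose
\[
\widehat f_{K,i}(\widehat q_{ir}) - f_i(q_{ir}) = \bigl[\widehat f_{K,i}(\widehat q_{ir}) - f_i(\widehat q_{ir})\bigr] + \bigl[f_i(\widehat q_{ir}) - f_i(q_{ir})\bigr].
\]
The first bracket is bounded in absolute value by $\sup_{x\in\R}\bigl|\widehat f_{K,i}(x)-f_i(x)\bigr|$, which tends to zero almost surely by the uniform consistency \eqref{eqn:kernel_uniform} granted under Assumption \ref{ass:kernel}; since the supremum ranges over all $x\in\R$, it dominates the estimation error even when the estimator is evaluated at the random point $\widehat q_{ir}$. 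The second bracket tends to zero in probability by continuity of $f_i$ at $q_{ir}$ (Assumption \ref{ass:densities}) combined with $\widehat q_{ir}\oPo q_{ir}$ via the continuous mapping theorem. Together these give $\widehat f_{K,i}(\widehat q_{ir}) \oPo f_i(q_{ir})$.

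The main obstacle, and precisely the reason the uniform statement \eqref{eqn:kernel_uniform} is required rather than mere pointwise consistency, is that the density estimator must be evaluated at the data-dependent argument $\widehat q_{ir}$ rather than at the fixed quantile $q_{ir}$: pointwise convergence $\widehat f_{K,i}(x)\oPo f_i(x)$ at each fixed $x$ would not licence passing to a random argument, whereas the uniform bound does. Having established $\widehat f_{K,i}(\widehat q_{ir})\oPo f_i(q_{ir})>0$ for all $r$, I would finally apply the continuous mapping theorem to the map displayed above, together with $n/n_i\to\kappa_i^{-1}$, to conclude
\[
\mathbf{\widehat\Sigma}_{ab}^{(i),K} \oPo \kappa_i^{-1}\frac{p_a\wedge p_b - p_ap_b}{f_i(q_{ia})f_i(q_{ib})} = \mathbf{\Sigma}_{ab}^{(i)},
\]
which is the assertion of the lemma.
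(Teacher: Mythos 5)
Your proof is correct and follows exactly the route the paper takes: the paper dispenses with this lemma in one line ("combining Proposition \ref{prop:uncond} and \eqref{eqn:kernel_uniform} yields consistency of the plug-in covariance matrix estimators"), and your argument simply fills in the details of that combination — sup-norm domination of the error at the random point $\widehat q_{ir}$, continuity of $f_i$ at $q_{ir}$ together with $\widehat q_{ir}\oPo q_{ir}$, and the continuous mapping theorem with $n/n_i\to\kappa_i^{-1}$. Your remark on why uniform (rather than pointwise) consistency of the kernel estimator is needed is exactly the right observation.
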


\subsection{Bootstrap estimator}\label{sec:boot}
In their one-way tests for equality of medians, \citet{chungRomano2013} used the bootstrap approach of \citet{efron1979} to estimate the asymptotic variance of the sample median. We adopt this idea for general quantiles. Therefore, for every group $i$, let $X^*_{i1},\ldots,X^*_{in_i}$ denote a bootstrap sample (drawn with replacement) from the observations $\mathbf{X}_i = (X_{ij})_{j=1,\ldots,n_i}$. From this we can calculate bootstrap versions of all previous estimators which we indicate by a superscript $^*$, e.g., $\widehat q_{ir}^{\:*}$ and $\widehat F_{i}^*$. Then, the mean squared error of the bootstrapped sample quantile given the data can be explicitly calculated using a simple reordering trick and \eqref{eqn:def_qhat}
\begin{align*}
\Bigl( \widehat \sigma_i^*(p_r) \Bigr)^2 &\equiv \E( n_i (\widehat q_{ir}^{\:*} - \widehat q_{ir})^2 \mid \mathbf{X}_i) = n_i \sum_{j=1}^{n_i} \Bigl( X_{ij} - \widehat q_{ir} \Bigr)^2 \P \Bigl(  \widehat q_{ir}^{\:*} = X_{ij}\mid \mathbf{X}_i\Bigr)\\
& = n_i \sum_{j=1}^{n_i} \Bigl( X_{j:n_i}^{(i)} - \widehat q_{ir} \Bigr)^2 P_{ij};\quad P_{ij}=\P \Bigl(  X_{\lceil mp_r \rceil :n_i}^{(i),*} = X_{j:n_i}^{(i)} \mid \mathbf{X}_i \Bigr).
\end{align*}
Following \citet{efron1979}, the probabilities $P_{ij}$ can be rewritten to
\begin{align*}
P_{ij} = \P( B_{n_i,(j-1)/n_i} \leq \lceil n_ip_r \rceil-1) - \P( B_{n_i,j/n_i} \leq \lceil n_ip_r \rceil-1) 
\end{align*}
where $B_{n,p}$ denotes a Binomial distributed random variable with size parameter $n$ and success probability $p$. In contrast to the standard jackknife method, the bootstrap  median variance estimator $( \widehat \sigma_i^*(1/2) )^2$ converges  to $1/(4f_i^2(F^{-1}(1/2)))$ as desired \citep{efron1979}. Moreover, a detailed proof for strong consistency of this estimator was given by \citet{ghoshETAL:1984} under 
\begin{assump}\label{ass:Boot_estim}
	Let  $\max_{i=1,\ldots,k}\E(|X_{i1}|^\delta)<\infty$ for some $\delta>0$.
\end{assump}
Later, \citet{babu:1986} weakened their assumptions and \citet{hall:martin:1988} studied the exact convergence rate of the estimator. Nevertheless, Assumption~\ref{ass:Boot_estim} is no big restriction for practical purposes and we therefore  prove consistency under this presumption.
\begin{lemma}\label{lem:Boot_estimator}
	Under Assumption \ref{ass:Boot_estim} we have for all $i=1,\dots,k$ and $a,b=1,\dots,m$:
	\begin{align*}%\label{eqn:def_hat_sigma_boot}
	\mathbf{\widehat \Sigma}_{ab}^{(i),B} \equiv \frac{n}{n_i}\widehat \sigma_i^*(p_a) \widehat \sigma_i^*(p_b)  \frac{p_a\wedge p_b - p_ap_b }{\sqrt{(p_a-p_a^2)(p_b-p_b^2)}} \overset{p}{\rightarrow} \mathbf{\Sigma}_{ab}^{(i)}.
	\end{align*}
\end{lemma}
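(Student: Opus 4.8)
The plan is to reduce the joint statement to a one-dimensional convergence for each probability $p_r$ separately and then to reassemble the matrix entries by a purely algebraic computation. Concretely, the only probabilistic input I need is that, for each fixed $r\in\{1,\ldots,m\}$ and each group $i$, the (group-size--scaled) bootstrap variance of the sample quantile is consistent for the asymptotic variance appearing in Proposition \ref{prop:uncond}, i.e.
\begin{align}\label{eq:boot-marginal}
\bigl(\widehat\sigma_i^*(p_r)\bigr)^2 = \E\bigl(n_i(\widehat q_{ir}^{\:*} - \widehat q_{ir})^2 \mid \mathbf{X}_i\bigr) \overset{p}{\to} \frac{p_r - p_r^2}{f_i(q_{ir})^2}.
\end{align}
Given \eqref{eq:boot-marginal}, the lemma follows from the continuous mapping theorem together with the fact that products of sequences converging in probability to constants again converge to the product of the limits.

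First I would establish \eqref{eq:boot-marginal}. For the median $p_r=1/2$ this is exactly the classical statement that $(\widehat\sigma_i^*(1/2))^2 \to 1/(4f_i^2(q_{i,1/2}))$, for which \citet{ghoshETAL:1984} give a detailed strong-consistency proof under Assumption \ref{ass:Boot_estim}. The extension to a general level $p_r\in(0,1)$ proceeds along the same lines: the explicit representation of $(\widehat\sigma_i^*(p_r))^2$ through the order statistics and the binomial tail probabilities $P_{ij}$ derived above does not exploit $p_r=1/2$, the moment condition $\E(|X_{i1}|^\delta)<\infty$ controls the contribution of the extreme order statistics, and the pointwise regularity of $f_i$ at $q_{ir}$ from Assumption \ref{ass:densities} supplies the local behaviour around $q_{ir}$ that produces the limit $(p_r-p_r^2)/f_i(q_{ir})^2$. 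This is the analytic heart of the proof and the step I expect to be the main obstacle, since it is where the moment assumption and the density regularity genuinely enter; everything else is bookkeeping.

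It remains to combine the pieces. By \eqref{eq:boot-marginal} and the continuity of the square root on $[0,\infty)$, the continuous mapping theorem gives $\widehat\sigma_i^*(p_r) \overset{p}{\to} \sqrt{p_r-p_r^2}/f_i(q_{ir})$ for every $r$; since the limits are deterministic positive constants, the products converge as well, $\widehat\sigma_i^*(p_a)\widehat\sigma_i^*(p_b) \overset{p}{\to} \sqrt{(p_a-p_a^2)(p_b-p_b^2)}/(f_i(q_{ia})f_i(q_{ib}))$. Plugging this together with $n/n_i\to\kappa_i^{-1}$ from \eqref{eqn:ni_n_kappai} into the definition of $\mathbf{\widehat\Sigma}_{ab}^{(i),B}$, the factor $\sqrt{(p_a-p_a^2)(p_b-p_b^2)}$ cancels against the deterministic prefactor and we obtain
\begin{align*}
\mathbf{\widehat\Sigma}_{ab}^{(i),B} \overset{p}{\to} \kappa_i^{-1}\,\frac{p_a\wedge p_b - p_ap_b}{f_i(q_{ia})f_i(q_{ib})} = \mathbf{\Sigma}_{ab}^{(i)},
\end{align*}
where the last equality is \eqref{eqn:def_sigmai}, equivalently the rewriting \eqref{eqn:def_sigmai_rewrite} that the estimator was designed to mimic. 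This completes the argument.
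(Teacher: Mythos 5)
Your proposal is correct and takes essentially the same route as the paper: the paper likewise reduces the lemma to marginal consistency of $(\widehat\sigma_i^*(p_r))^2$ for each $p_r$ — a step it does not prove in detail either, but delegates to the cited literature (\citet{ghoshETAL:1984} for the median under Assumption \ref{ass:Boot_estim}, with \citet{babu:1986} covering general quantiles) — and then assembles the matrix entries via $n/n_i\to\kappa_i^{-1}$ and the rewriting \eqref{eqn:def_sigmai_rewrite}, exactly as in your final display. Your identification of the extension from $p_r=1/2$ to general $p_r$ as the analytic heart of the argument matches the paper's own treatment, which handles that step by citation rather than by proof.
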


\subsection{Interval-based estimator}\label{sec:PB}
\citet{mckeanSchrader1984} introduced an estimator for the sample median standard deviation based on a standardized confidence interval. Later, \citet{priceBonett2001} suggested to modify this estimator to improve its performance in small sample size settings. Both estimators are consistent \citep{priceBonett2001} and can compete with the aforementioned bootstrap approach in simulations \citep{mckeanSchrader1984,priceBonett2001} with a slightly better performance of the Price--Bonnet modification. While both papers only treat the median, extensions to general quantiles follow intuitively and have already been used, e.g., for the $25\%$- and $75\%$-quantile in \citet{bonett2006}. For a thorough definition of this extension let $p\in(0,1)$ be a given level. Then we define the (extended) McKean--Schrader estimator for the standard deviation of the $p$-th sample quantile as
\begin{align*}%\label{eqn:schrader+mckean_est}
\widehat \sigma_i^{MS}(p) = n_i^{1/2}\frac{(X^{(i)}_{u_i(p):n_i} - X^{(i)}_{l_i(p):n_i})}{ 2z_{\alpha/2}},
\end{align*}
where $\alpha\in(0,1)$ and $l_i (p) = 1 \vee \lfloor n_ip - z_{\alpha/2} \sqrt{n_i}\sqrt{p(1-p)}\rfloor$ as well as $u_i (p) = n_i \wedge \lfloor n_ip + z_{\alpha/2} \sqrt{n_i}\sqrt{p(1-p)}\rfloor$ are the lower and upper limits of binomial intervals. Here, $z_{\alpha/2}$ denotes the $(1-\alpha/2)$-quantile of the standard normal distribution. % and $\lfloor x \rfloor$ is the integer part of $x\in\R$.
Typically, $\alpha=0.05$ is chosen leading to $z_{\alpha/2}\approx 1.96$. A brief discussion on the effect of the choice $\alpha$ on the estimator can be found in \citet{priceBonett2001}. In fact, the Price--Bonnet modification concerns the choice of $\alpha$: They propose to replace it in the denominator by the following finite sample correction (where we suppressed the dependency on $i$ for ease of notation)
\begin{align*}
\alpha_n^*(p) &= \P\Bigl( F_i^{-1}(p) \notin (X^{(i)}_{l_i(p):n_i}, X^{(i)}_{u_i(p):n_i}) \Bigr) = 1- \P \Bigl( \sum_{j=1}^{n_i} \mathbf{1}\{ X_{i,j} \leq  F_i^{-1}(p)\} \in (l_i(p), u_i(p) ) \Bigr)\\
& = 1 - \sum_{i= l_i(p)+1}^{ u_i(p) - 1} \binom{n_i}{j} p^j(1-p)^{n_i - j}.
\end{align*}
Clearly, $\alpha_n^*(p) \to \alpha$ by the central limit theorem. For large sample sizes the benefit of the correction is negligible and may even lead to computational problems due to $\binom{n_i}{j} \gg 1$, especially for $j\approx n_i/2$. Thus, we only use the modifications for sample sizes smaller than $100$ and recommend to set $\alpha_n^*(p)=\alpha$ for larger values ($n_i>100$). %{\color{red} (Price+Bonnet haben da jetzt keinen Richtwert angegeben. Sollen wir das tun? Also bei $n_i>1000$ hatte R schon Probleme. Aber die Grenze kann man auch schon deutlich drunter setzen).} {\color{red}Finde ich einen vernuenftigen Vorschlag}.
Moreover, the simulations of \citet{priceBonett2001} reveal that additionally adding $2n_i^{-1/2}$ to the denominator results in a slight reduction of bias and mean squared error. Altogether, we thus define their extended estimator for the respective standard deviation as
\begin{align*}%\label{eqn:price+bonett_est}
\widehat \sigma_i^{\text{PB}}(p) = n_i^{1/2}\frac{(X^{(i)}_{u_i(p):n_i} - X^{(i)}_{l_i(p):n_i})}{ 2z_{\alpha_n^*(p)/2}+ 2n_i^{-1/2}}.
\end{align*}
As explained above, this estimator is consistent for the variance and we thus obtain:
\begin{lemma}\label{lem:PB_estimator}
	We have for all $i=1,\dots,k$ and $a,b=1,\dots,m$:
	\begin{align}\label{eqn:def_hat_sigma_PB}
	\mathbf{\widehat \Sigma}_{ab}^{(i),\text{PB}} = \frac{n}{n_i}\widehat \sigma_i^{\text{PB}}(p_a) \widehat \sigma_i^{\text{PB}}(p_b)  \frac{p_a\wedge p_b - p_ap_b }{\sqrt{(p_a-p_a^2)(p_b-p_b^2)}} \overset{p}{\rightarrow} \mathbf{\Sigma}_{ab}^{(i)}.
	\end{align}
\end{lemma}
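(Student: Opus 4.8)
The plan is to reduce the whole statement to its diagonal entries and then invoke the representation \eqref{eqn:def_sigmai_rewrite}. First I would note that for $a=b$ the trailing factor equals one, so that $\mathbf{\widehat \Sigma}_{aa}^{(i),\text{PB}} = \frac{n}{n_i}\bigl(\widehat \sigma_i^{\text{PB}}(p_a)\bigr)^2$, and, since all quantities are nonnegative,
\[
\frac{n}{n_i}\,\widehat \sigma_i^{\text{PB}}(p_a)\,\widehat \sigma_i^{\text{PB}}(p_b) = \sqrt{\mathbf{\widehat \Sigma}_{aa}^{(i),\text{PB}}\,\mathbf{\widehat \Sigma}_{bb}^{(i),\text{PB}}}.
\]
Hence, once the diagonal convergence $\mathbf{\widehat \Sigma}_{aa}^{(i),\text{PB}}\overset{p}{\to}\mathbf{\Sigma}_{aa}^{(i)}$ is established, the continuous mapping theorem (the square root being continuous on $[0,\infty)$), the deterministic factor $(p_a\wedge p_b - p_ap_b)/\sqrt{(p_a-p_a^2)(p_b-p_b^2)}$, and the rewrite \eqref{eqn:def_sigmai_rewrite} immediately yield the off-diagonal statement \eqref{eqn:def_hat_sigma_PB}. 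So it suffices to fix a single level $p\in\{p_1,\ldots,p_m\}$ and prove $(\widehat \sigma_i^{\text{PB}}(p))^2\overset{p}{\to} p(1-p)/f_i(F_i^{-1}(p))^2$, because $n/n_i\to\kappa_i^{-1}$ by \eqref{eqn:ni_n_kappai} and $\mathbf{\Sigma}_{aa}^{(i)} = \kappa_i^{-1}p_a(1-p_a)/f_i(q_{ia})^2$ by \eqref{eqn:def_sigmai}.

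For the denominator of $\widehat \sigma_i^{\text{PB}}(p)$ I would use $\alpha_n^*(p)\to\alpha$ (as already noted in the text) and the continuity of the standard normal quantile function to get $2z_{\alpha_n^*(p)/2}+2n_i^{-1/2}\to 2z_{\alpha/2}$. The substance therefore lies in the numerator $n_i^{1/2}(X^{(i)}_{u_i(p):n_i}-X^{(i)}_{l_i(p):n_i})$. Writing $t_{l}=l_i(p)/n_i$ and $t_{u}=u_i(p)/n_i$, the defining formulas give $t_l = p - z_{\alpha/2}\sqrt{p(1-p)/n_i}+O(n_i^{-1})$ and $t_u = p + z_{\alpha/2}\sqrt{p(1-p)/n_i}+O(n_i^{-1})$, so in particular $t_l,t_u\to p$ and $t_u-t_l = 2z_{\alpha/2}\sqrt{p(1-p)/n_i}+O(n_i^{-1})$. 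Recognising the order statistics as sample quantiles, $X^{(i)}_{l_i(p):n_i}=\widehat F_i^{-1}(t_l)$ and $X^{(i)}_{u_i(p):n_i}=\widehat F_i^{-1}(t_u)$, I would split
\[
\widehat F_i^{-1}(t_u)-\widehat F_i^{-1}(t_l) = \bigl[F_i^{-1}(t_u)-F_i^{-1}(t_l)\bigr] + n_i^{-1/2}\bigl[U_{n_i}(t_u)-U_{n_i}(t_l)\bigr],
\]
where $U_{n_i}(t)=\sqrt{n_i}(\widehat F_i^{-1}(t)-F_i^{-1}(t))$ is the empirical quantile process of group $i$.

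The deterministic part is handled by a first-order Taylor expansion: under Assumption \ref{ass:densities}, $F_i^{-1}$ is differentiable at $p$ with derivative $1/f_i(F_i^{-1}(p))$, whence $\sqrt{n_i}\,[F_i^{-1}(t_u)-F_i^{-1}(t_l)] = \sqrt{n_i}(t_u-t_l)/f_i(F_i^{-1}(p)) + o(1)\to 2z_{\alpha/2}\sqrt{p(1-p)}/f_i(F_i^{-1}(p))$. The crux, and the step I expect to be the main obstacle, is showing that the stochastic part vanishes, i.e.\ $U_{n_i}(t_u)-U_{n_i}(t_l)\overset{p}{\to}0$. This is precisely an oscillation (asymptotic equicontinuity) statement for the quantile process: since $t_l,t_u\to p$ with $|t_u-t_l|=O(n_i^{-1/2})\to0$ and the limiting process has continuous sample paths at $p$, the increment of $U_{n_i}$ over the shrinking interval $[t_l,t_u]$ converges to zero in probability. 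Equivalently, one may invoke the Bahadur representation of the two intermediate order statistics, whereupon the shared leading stochastic term cancels in the difference and only a negligible remainder survives; either route rests solely on the local regularity of $F_i$ at $F_i^{-1}(p)$ granted by Assumption \ref{ass:densities}.

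Combining the two parts gives $n_i^{1/2}(X^{(i)}_{u_i(p):n_i}-X^{(i)}_{l_i(p):n_i})\overset{p}{\to} 2z_{\alpha/2}\sqrt{p(1-p)}/f_i(F_i^{-1}(p))$, and dividing by the denominator limit $2z_{\alpha/2}$ yields $\widehat \sigma_i^{\text{PB}}(p)\overset{p}{\to}\sqrt{p(1-p)}/f_i(F_i^{-1}(p))$ by Slutsky's lemma. Squaring and applying the reduction of the first paragraph then completes the proof.
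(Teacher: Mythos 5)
Your proof is correct, but it is not the argument the paper gives for this lemma. The paper offers no self-contained proof of Lemma~\ref{lem:PB_estimator} at all: consistency of the univariate estimator $\widehat\sigma_i^{\text{PB}}(p)$ is taken from the cited literature \citep{priceBonett2001,bonett2006}, and the matrix statement then follows from the rewrite \eqref{eqn:def_sigmai_rewrite}, exactly as in your opening reduction. A detailed argument of the kind you construct appears in the paper only for the permutation counterpart (Lemma~\ref{lem:est_f_perm_consis} in the appendix): there the shifted quantile $(\widehat F_{i}^\pi)^{-1}(p_r \pm k_{n,j})$ is rewritten as $\Phi_{p_r}(\widehat F_{i}^\pi \mp k_{n,j})$, the pair of shifted processes is shown to converge jointly, and the uniform Hadamard differentiability result (Lemma~\ref{lem:hada_diff}) plus the functional $\delta$-method produce a joint limit in which the common Gaussian term $\kappa_i^{1/2}\mathbb{G}_i(q_r)$ cancels in the difference, leaving the deterministic constant $2\xi/f(q_r)$. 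Your decomposition---a deterministic Taylor term plus a quantile-process increment, the increment vanishing by asymptotic equicontinuity or by cancellation of the leading Bahadur terms---is an elementary i.i.d.\ version of exactly that cancellation; note that your equicontinuity phrasing is slightly informal (Assumption~\ref{ass:densities} only controls $f_i$ at the point $q_{ia}$, not on a fixed neighborhood), but your alternative Bahadur route is fully rigorous, since the paper itself invokes Lemma~1 of \citet{bahadur1966} under precisely this assumption, cf.\ \eqref{eqn:sup_Fni(q+x)}. What your route buys is a self-contained proof of the unconditional lemma from Assumption~\ref{ass:densities} alone, with no appeal to the cited consistency results; what the paper's heavier empirical-process machinery buys is that the same computation survives permutation of the pooled, non-identically-distributed sample, where your i.i.d.\ quantile-process tools are not directly available.
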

Utilizing the three different choices of covariance estimators results in three different versions of the asymptotic test $\varphi_n$. However, simulation results (Section~\ref{sec:sim}) exhibit serious issues for small to moderate sample sizes which may be due
to a rather poor $\chi^2$-approximation to the test statistic. To tackle this problem, we propose the initially mentioned technique of permuting studentized statistics.

\section{Permutation test}\label{sec:perm}

For a better finite sample performance, it is often advisable to replace the asymptotic critical value of the test, here the $(1-\alpha)$-quantile of the $\chi^2_{\text{rank}(\mathbf{T})}$-distribution, by a resampling-based critical value. For the current problem, we promote the permutation approach, which leads to a finitely exact test under exchangeability, i.e., under $\widetilde{ \mathcal H}_0: F_1=\ldots=F_k$. Moreover, the proper studentization within the Wald-type statistic makes it possible to transfer the consistency and asymptotic exactness (under $\mathcal{H}_0:\mathbf{T}\mathbf{q}=0$) of the tests $\varphi_n$ to their permutation versions. To explain this, let $\mathbf{X}^\pi=(X_{ij}^\pi)_{i=1,\ldots,k;j=1,\ldots,n_i}$ be a random permutation of the pooled data $\mathbf{X}=(X_{ij})_{i=1,\ldots,k;j=1,\ldots,n_i}$. As for Efron's bootstrap, we draw new samples from the pooled data, but now without replacement. In other words, we randomly permute the group memberships of the observations $X_{ij}$. Pooling the data affects our Assumptions \ref{ass:densities} and \ref{ass:kernel} such that we need to replace the original distribution functions $F_i$ and their densities $f_i$ by their pooled versions $F=\sum_{i=1}^k\kappa_i F_i$ and $f=\sum_{i=1}^k\kappa_if_i$ describing the (unconditional) distribution of $X_{ij}^\pi$. To be concrete, we postulate

\begin{assump}\label{ass:densities_perm}
	Let $F$ be differentiable with uniformly continuous derivative $f$ such that $f(q_{r})>0$ for all $r$, where $q_r=F^{-1}(p_r)$, and $K_i$ be a kernel fulling Assumption \ref{ass:kernel}.
\end{assump}
As in \citet{chungRomano2013} it turned out that the asymptotic correctness of the permutation approach needs a certain convergence rate in the sample size condition \eqref{eqn:ni_n_kappai}:
\begin{align}\label{eqn:ni_n_perm}
\frac{n_i}{n} - \kappa_i = O(n^{-1/2}).
\end{align}

\begin{theorem}\label{theo:stat_perm}
	Under $\mathcal H_0:\mathbf{T}\mathbf{q}=\mathbf{0}_{km}$ as well as under $\mathcal H_1:\mathbf{T}\mathbf{q}\neq\mathbf{0}_{km}$, the permutation version $S_n^\pi(\mathbf{T})$ of $S_n(\mathbf{T})$ with any of the covariance estimators \eqref{eqn:def_hat_sigma_kernel} -- \eqref{eqn:def_hat_sigma_PB} always mimics its null distribution asymptotically, i.e.,
	\begin{align}\label{eqn:theo:stat:perm}
	\sup_{x\in\R} \Bigl |  \P \Bigl( S_n^\pi(\mathbf{T}) \leq x \vert \mathbf{X} \Bigr) - \chi^2_{\text{rank}(\mathbf{T})}((-\infty,x])  \Bigr| \overset{p}{\rightarrow} 0.% \quad\text{ in probability}.
	\end{align}
\end{theorem}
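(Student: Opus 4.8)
The plan is to show that, conditionally on the pooled data $\mathbf{X}$, the distribution function of $S_n^\pi(\mathbf{T})$ converges in probability (uniformly in $x$) to that of $\chi^2_{\text{rank}(\mathbf{T})}$; since the limiting distribution is continuous, Pólya's theorem then upgrades a pointwise-in-probability statement to the uniform conclusion \eqref{eqn:theo:stat:perm}. A structural remark makes the two regimes of the theorem collapse into one argument: permuting always produces samples from the pooled data, so the proof never uses which $F_i$ generated the observations — only the pooled $F=\sum_i\kappa_iF_i$ and $f=\sum_i\kappa_if_i$ of Assumption~\ref{ass:densities_perm} enter. Hence the reasoning is literally identical under $\mathcal H_0$ and $\mathcal H_1$, which is precisely what makes the resulting test both asymptotically exact and consistent.

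The core is a conditional joint central limit theorem for the permuted sample quantiles. First I would establish a conditional functional CLT for the permutation empirical processes $\bigl(\sqrt{n}(\widehat F_i^\pi-\widehat F)\bigr)_{i=1,\ldots,k}$, jointly over groups and arguments, using the extended permutation-empirical-process results referenced from \citet{vaartWellner1996}. Writing $q_r=F^{-1}(p_r)$ for the pooled quantiles and exploiting that the groups arise by sampling \emph{without} replacement from the size-$n$ pool, the limiting Gaussian covariance carries a finite-population correction and nontrivial cross-group terms: with $\mathbf G$ the $m\times m$ matrix $\mathbf G_{ab}=(p_a\wedge p_b-p_ap_b)/(f(q_a)f(q_b))$, the diagonal block $(i,i)$ is $\kappa_i^{-1}(1-\kappa_i)\mathbf G$ while every off-diagonal block $(i,i')$ equals $-\mathbf G$. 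A functional delta method, via the uniform Hadamard differentiability of the quantile map (again the extension advertised in the introduction), transfers this to a conditional CLT for $\sqrt{n}(\widehat{\mathbf q}^\pi-\mathbf q^{\mathrm{pool}})$ in probability, where $\mathbf q^{\mathrm{pool}}=\mathbf 1_k\otimes(q_1,\ldots,q_m)'$; its randomness splits into a mean-zero Gaussian part $\mathbf Z^\star\sim N(\mathbf 0,\mathbf{\Sigma}^\star)$ with the block covariance just described, plus a common, data-dependent shift $\mathbf 1_k\otimes\mathbf c_n$ coming from the conditional constant $\sqrt{n}(\widehat F(q_r)-p_r)$. Here the rate condition \eqref{eqn:ni_n_perm} is invoked, exactly as in \citet{chungRomano2013}, to control the bias stemming from the deviation of the sampling fractions $n_i/n$ from $\kappa_i$.

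Two structural facts then produce the $\chi^2$ limit. Since $\mathbf T=\mathbf H'(\mathbf{HH}')^+\mathbf H$ annihilates $\ker\mathbf H$, and the contrast matrices of interest (of Kronecker form $\mathbf H_0\otimes\mathbf c'$ with $\mathbf H_0\mathbf 1_k=\mathbf 0$, and direct sums thereof) satisfy $\mathbf H(\mathbf 1_k\otimes\mathbf v)=\mathbf 0$ for every $\mathbf v\in\R^m$, we have $\mathbf T(\mathbf 1_k\otimes\mathbf I_m)=\mathbf 0$; consequently the common shift $\mathbf 1_k\otimes\mathbf c_n$ is killed, giving conditionally $\sqrt{n}\,\mathbf T\widehat{\mathbf q}^\pi=\mathbf T\sqrt{n}(\widehat{\mathbf q}^\pi-\mathbf q^{\mathrm{pool}})+o_p(1)\overset{\mathrm d}{\longrightarrow}\mathbf T\mathbf Z^\star\sim N(\mathbf 0,\mathbf T\mathbf{\Sigma}^\star\mathbf T')$. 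Secondly, re-running the consistency arguments of Lemmas~\ref{lem:kern_density_estimator}--\ref{lem:PB_estimator} on the permuted data — legitimate because $\mathbf X_i^\pi$ behaves like an i.i.d.\ $F$-sample and Assumption~\ref{ass:densities_perm} supplies the pooled analogues of Assumptions~\ref{ass:densities} and \ref{ass:kernel} — shows that each block-diagonal permuted estimator satisfies $\widehat{\mathbf{\Sigma}}^{(i),\pi}\overset{p}{\rightarrow}\kappa_i^{-1}\mathbf G$, i.e.\ $\widehat{\mathbf{\Sigma}}^\pi\overset{p}{\rightarrow}\mathbf{\Sigma}^{\lim}=\bigoplus_i\kappa_i^{-1}\mathbf G$, with the \emph{same} $\mathbf G$ for all three covariance choices \eqref{eqn:def_hat_sigma_kernel}--\eqref{eqn:def_hat_sigma_PB}. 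The crucial cancellation is that $\mathbf{\Sigma}^\star-\mathbf{\Sigma}^{\lim}=-(\mathbf J_k\otimes\mathbf G)$ — every block being $-\mathbf G$ — so that $\mathbf T(\mathbf{\Sigma}^\star-\mathbf{\Sigma}^{\lim})\mathbf T'=\mathbf T(\mathbf 1_k\otimes\mathbf I_m)(\mathbf 1_k'\otimes\mathbf G)\mathbf T'=\mathbf 0$ by the pooling property, whence $\mathbf T\mathbf{\Sigma}^\star\mathbf T'=\mathbf T\mathbf{\Sigma}^{\lim}\mathbf T'$.

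Finally I would assemble the pieces as in Theorem~\ref{theo:teststat_uncon}. Since $\mathbf{\Sigma}^{\lim}$ is positive definite, $\text{rank}(\mathbf T\mathbf{\Sigma}^{\lim}\mathbf T')=\text{rank}(\mathbf T)$ and the Moore--Penrose inverse is continuous there, so $(\mathbf T\widehat{\mathbf{\Sigma}}^\pi\mathbf T')^+\overset{p}{\rightarrow}(\mathbf T\mathbf{\Sigma}^{\lim}\mathbf T')^+$. A conditional Slutsky and continuous-mapping step then yields $S_n^\pi(\mathbf T)=(\sqrt{n}\,\mathbf T\widehat{\mathbf q}^\pi)'(\mathbf T\widehat{\mathbf{\Sigma}}^\pi\mathbf T')^+(\sqrt{n}\,\mathbf T\widehat{\mathbf q}^\pi)\overset{\mathrm d}{\longrightarrow}(\mathbf T\mathbf Z^\star)'(\mathbf T\mathbf{\Sigma}^{\lim}\mathbf T')^+(\mathbf T\mathbf Z^\star)$ in probability, and the right-hand side is $\chi^2_{\text{rank}(\mathbf T)}$ by the very rank computation used for $Z$ in Theorem~\ref{theo:teststat_uncon}; Pólya's theorem gives \eqref{eqn:theo:stat:perm}. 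The main obstacle is the technical heart of the second paragraph: the conditional functional CLT for the jointly permuted empirical processes together with the passage through the quantile map — exactly the permutation-empirical-process and uniform-Hadamard-differentiability extensions the introduction flags. Everything downstream is the familiar robust-studentization bookkeeping, whose punchline is the exact cancellation of the finite-population and cross-group terms against the pooling property of $\mathbf T$.
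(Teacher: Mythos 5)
Your proposal is correct and follows essentially the same route as the paper's own proof: the paper likewise establishes a conditional CLT for the jointly permuted empirical processes (Lemma \ref{lem:emp_proc_F}), passes to the permuted sample quantiles via uniform Hadamard differentiability of the quantile map and the functional delta method (Lemmas \ref{lem:hada_diff} and \ref{lem:perm}), shows that the permuted covariance estimators converge to the block-diagonal limit $\widetilde{\boldsymbol{\Sigma}}^\pi=\bigoplus_{i=1}^k\widetilde{\boldsymbol{\Sigma}}^{(i),\pi}$ (Lemmas \ref{lem:est_f_perm_consis} and \ref{lem:est_f_kern_perm_consis}), exploits exactly your cancellation $\mathbf{T}\boldsymbol{\Sigma}^\pi\mathbf{T}'=\mathbf{T}\widetilde{\boldsymbol{\Sigma}}^\pi\mathbf{T}'$, and concludes with the Rao--Mitra rank argument and continuous mapping. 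The only deviations are matters of rigor, one in your favor and one against: you correctly observe that the cancellation really needs $\mathbf{T}(\mathbf{1}_k\otimes\mathbf{I}_m)=\mathbf{0}$ (valid for the Kronecker-type contrasts), which is sharper than the paper's bare appeal to $\mathbf{T}\mathbf{1}_{km}=\mathbf{0}$, whereas your justification that the permuted groups ``behave like an i.i.d.\ $F$-sample'' is only a heuristic---they are drawn without replacement from the pool, so the paper instead derives the permuted-estimator consistency rigorously from the permutation process CLT rather than by re-running the unconditional Lemmas \ref{lem:kern_density_estimator}--\ref{lem:PB_estimator}.
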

Replacing the critical value $\chi^2_{\text{rank}(\mathbf{T}),1-\alpha}$ of the asymptotical tests with $c_n^\pi(\alpha)$, the $(1-\alpha)$-quantile of the conditional distribution function $ x\mapsto \P (S_n^\pi(\mathbf{T}) \leq x \vert \mathbf{X})$, leads to three different permutation tests $\varphi_n^\pi = 1\{S_n(\mathbf{T})>c_n^\pi(\alpha)\}$. Under the assumptions given in Theorem~\ref{theo:stat_perm} it follows that $c_n^\pi(\alpha)$ converges in probability to $\chi^2_{\text{rank}(\mathbf{T}),1-\alpha}$ irrespective whether the null hypothesis is true or not. Thus, we can deduce the asymptotic exactness of the permutation test  and its consistency for general fixed alternatives \citep[Lemma 1 and Theorem 7]{janssenPauls2003}.  In addition, we prove in the next section that the permutation test has an asymptotic relative efficiency of $1$ compared to the asymptotic test $\varphi_n$, i.e., the tests' asymptotic power values coincide for local alternatives.

\subsection{Local alternatives}\label{sec:local_alt}
To study local alternatives we need to replace Model \eqref{eq:model} with its local counterpart given by a triangular array of row-wise independent random variables
\begin{align*}
X_{nij} \sim F_{ni} \quad (i=1,\ldots,k;j=1,\ldots,n_i)
\end{align*}
with absolutely continuous distribution functions $F_{ni}$, corresponding densities $f_{ni}$, quantiles $q_{nir}$ and quantile vector $\mathbf{q}_n= (q_{n11},\ldots,q_{n1m},q_{n21},\ldots,q_{nkm})'$. Within this framework we discuss local alternatives of the form $\mathbf{T}\mathbf{q}_n = O(n^{-1/2})$, i.e., small perturbations of the null hypotheses, under the following additional regularity conditions:
\begin{assump}\label{ass:local_alt}
	For every $i=1,\ldots,k$ let $F_i$ be an absolutely continuous distribution function with corresponding density $f_i$. Moreover, set $F=\sum_{i=1}^k\kappa_iF_i$. 
	\begin{enumerate}[(i)]
		\item \label{enu:ass:al_alt_F} For some $M>0$ let $\sqrt{n} |F_{ni}(x)-F_i(x)|\leq M$ for all $n\in\N$ and all $x\in\R$.
		
		\item \label{enu:ass:local_alt_fi} Suppose that $f_i$ is continuous and positive at $q_{ir}$ and that $f_{ni}$ converges uniformly to $f_i$ in a compact neighborhood around $q_{ir}=F_i^{-1}(p_r)$ for all $r$.
		
		\item \label{enu:ass:local_alt_fi_perm} For the permutation approach, suppose additionally \eqref{eqn:ni_n_perm}, Assumption \ref{ass:densities_perm} and  uniform convergence of $f_{ni}$ to $f_i$ in a compact neighborhood around $q_{r}=F^{-1}(p_r)$ for every $r$.
	\end{enumerate}
\end{assump}
While (ii) and (iii) are local versions of the regularity conditions assumed for Model \eqref{eq:model}, condition (i) ensures the usual $\sqrt{n}$-convergence of $F_{ni}$ to $F_i$. Anyhow, the asymptotic power function of both tests can be described by means of a non-central  $\chi^2$ distribution:
\begin{theorem}\label{theo:local_alt}
	Under $\sqrt{n}\mathbf{T}\mathbf{q}_n \to \boldsymbol{\theta}\neq \mathbf{0}_{km}$ the asymptotic test $\varphi_n$ and its permutation variant $\varphi_n^\pi$ with any of the covariance estimators \eqref{eqn:def_hat_sigma_kernel} -- \eqref{eqn:def_hat_sigma_PB} have the same asymptotic power $P(Z > \chi^2_{\text{rank}(\mathbf{T}),1-\alpha})>\alpha$, where $Z$ is $\chi^2_{\text{rank}(\mathbf{T})}(\delta)$-distributed with non-centrality parameter $\delta= \boldsymbol{\theta}'( \mathbf{T} \mathbf{  \Sigma} \mathbf{T}' )^+ \boldsymbol{\theta} > 0$.
\end{theorem}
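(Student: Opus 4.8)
The plan is to first pin down the limiting law of the Wald-type statistic $S_n(\mathbf T)$ under the triangular-array model and then to show that the permutation critical value is asymptotically unaffected by the local shift. First I would establish a local-alternative analogue of Proposition~\ref{prop:uncond}: under Assumption~\ref{ass:local_alt}(i)--(ii),
\[
\sqrt{n}\bigl(\widehat{\mathbf q} - \mathbf q_n\bigr) \overset{d}{\longrightarrow} \mathbf Z \sim N(\mathbf 0,\mathbf\Sigma),
\]
with the same $\mathbf\Sigma$ as in \eqref{eqn:def_sigmai}. The tool is a functional CLT for the group-wise empirical processes of the array together with the (uniform) Hadamard differentiability of the quantile map; here condition~(i) forces $F_{ni}\to F_i$ uniformly (indeed $|F_{ni}-F_i|\le M n^{-1/2}$), so the empirical processes inherit the limiting covariance structure of $F_i$, while the uniform density convergence in~(ii) gives $f_{ni}(q_{nir})\to f_i(q_{ir})$ and hence the unchanged $\mathbf\Sigma$. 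Adding the deterministic shift $\sqrt n\,\mathbf T\mathbf q_n\to\boldsymbol\theta$ via Slutsky yields $\sqrt n\,\mathbf T\widehat{\mathbf q}\overset{d}{\longrightarrow}\mathbf W\sim N(\boldsymbol\theta,\mathbf T\mathbf\Sigma\mathbf T')$. Since the three covariance estimators of Lemmas~\ref{lem:kern_density_estimator}--\ref{lem:PB_estimator} depend on the data only through the densities at the quantiles, their consistency survives and $(\mathbf T\widehat{\mathbf\Sigma}\mathbf T')^+\overset{p}{\rightarrow}(\mathbf T\mathbf\Sigma\mathbf T')^+$; the continuous mapping theorem then gives $S_n(\mathbf T)\overset{d}{\longrightarrow}\mathbf W'(\mathbf T\mathbf\Sigma\mathbf T')^+\mathbf W$. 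Because $\boldsymbol\theta=\lim\sqrt n\,\mathbf T\mathbf q_n$ lies in the range of $\mathbf T$, and $\mathbf\Sigma\succ0$ forces $\operatorname{range}(\mathbf T\mathbf\Sigma\mathbf T')=\operatorname{range}(\mathbf T)$, the quadratic-form result \citep[Thm.~9.2.2]{rao:mitra:1971} identifies the limit as $Z\sim\chi^2_{\text{rank}(\mathbf T)}(\delta)$ with $\delta=\boldsymbol\theta'(\mathbf T\mathbf\Sigma\mathbf T')^+\boldsymbol\theta$, which already settles the asymptotic power of $\varphi_n$.

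For the permutation variant I would show that $c_n^\pi(\alpha)\overset{p}{\rightarrow}\chi^2_{\text{rank}(\mathbf T),1-\alpha}$ continues to hold under the local model. The cleanest route is contiguity: condition~(i) together with the density assumptions makes the local-alternative law contiguous to the fixed-model law underlying Theorem~\ref{theo:stat_perm}, the $O(n^{-1/2})$ bound guaranteeing a bounded limiting log-likelihood variance. Since \eqref{eqn:theo:stat:perm} asserts that the conditional law of $S_n^\pi(\mathbf T)$ approaches $\chi^2_{\text{rank}(\mathbf T)}$ in probability under the fixed model, and convergence in probability is preserved along contiguous sequences, the same conditional convergence persists when $\mathbf T\mathbf q_n=O(n^{-1/2})$; Assumption~\ref{ass:local_alt}(iii) supplies exactly the pooled density conditions required by the underlying permutation empirical-process argument. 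Inverting the convergent conditional distribution function then yields $c_n^\pi(\alpha)\overset{p}{\rightarrow}\chi^2_{\text{rank}(\mathbf T),1-\alpha}$. Alternatively one can argue directly that the pooled empirical distribution still converges to $F$, the local shifts being $O(n^{-1/2})$ and hence negligible in the permutation limit.

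To combine, note that $S_n(\mathbf T)\overset{d}{\longrightarrow}Z$ while $c_n^\pi(\alpha)$ converges in probability to the constant $\chi^2_{\text{rank}(\mathbf T),1-\alpha}$; a standard Slutsky sandwich, using continuity of the non-central $\chi^2$ distribution function, gives $\P(S_n(\mathbf T)>c_n^\pi(\alpha))\to\P(Z>\chi^2_{\text{rank}(\mathbf T),1-\alpha})$, so $\varphi_n$ and $\varphi_n^\pi$ share the same limiting power. Finally, because $\boldsymbol\theta\neq\mathbf 0_{km}$ lies in the range of the positive semidefinite $\mathbf T\mathbf\Sigma\mathbf T'$, its Moore--Penrose inverse is positive definite there, whence $\delta>0$ and, by the strict stochastic ordering of the non-central over the central $\chi^2$, $\P(Z>\chi^2_{\text{rank}(\mathbf T),1-\alpha})>\alpha$. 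I expect the permutation step to be the main obstacle: one must verify that the conditional null approximation is not disturbed by the local drift, i.e.\ establish the contiguity (or, in the direct approach, control the permutation empirical process of the triangular array) and rule out that the $O(n^{-1/2})$ perturbation leaks into the permutation limit in spite of the studentization.
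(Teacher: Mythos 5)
Your treatment of the asymptotic test $\varphi_n$ is essentially the paper's own argument: a triangular-array functional CLT for the group-wise empirical processes (the paper's Lemma \ref{lem:local_conv_q}, via Theorem 2.8.10 of \citet{vaartWellner1996}), uniform Hadamard differentiability of the quantile map (Lemma \ref{lem:hada_diff}, with condition \eqref{eqn:lem:hada:cond_Gn-Gn-G+G} supplied by Assumption \ref{ass:local_alt}(ii) and the mean value theorem), the uniform delta-method, Slutsky for the drift $\sqrt{n}\,\mathbf{T}\mathbf{q}_n\to\boldsymbol{\theta}$, and \citet[Theorem 9.2.2]{rao:mitra:1971} to identify the non-central $\chi^2$ limit; your argument that $\delta>0$ and that the power exceeds $\alpha$ is also sound.

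The genuine gap is in your permutation step. Contiguity of the triangular-array laws $\bigotimes_{i,j}F_{ni}$ to the fixed-model laws $\bigotimes_{i,j}F_i$ does \emph{not} follow from Assumption \ref{ass:local_alt}. Contiguity is a statement about likelihood ratios, i.e.\ it requires that the summed squared Hellinger distances $\sum_i n_i H^2(f_{ni},f_i)$ stay bounded; but condition (i) only constrains the \emph{distribution functions} at rate $n^{-1/2}$, and conditions (ii)--(iii) constrain the densities only on compact neighborhoods of the finitely many quantiles. Away from the quantiles the densities are unrestricted: take $f_{ni}(x)=f_i(x)\bigl(1+\sin(n^2F_i(x))\,\psi(x)\bigr)$ with $\psi$ a smooth bump vanishing near every $q_{ir}$ and $q_r$; integration of the oscillation makes $\sup_x|F_{ni}(x)-F_i(x)|=O(n^{-2})$, so (i)--(iii) all hold, yet the per-observation $L_1$ (and Hellinger) distance is of order one, so the product measures are asymptotically mutually singular and your "bounded limiting log-likelihood variance" claim fails. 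Hence the transfer of \eqref{eqn:theo:stat:perm} by contiguity is not available under the stated assumptions. This is precisely why the paper does the work directly: it re-proves the conditional permutation empirical-process CLT for the triangular array (Lemma \ref{lem:local_emp_proc_F}, via Lemma \ref{lem:emp_proc_perm_local_alt}, where the tightness argument must be re-traced because the multiplier theorem of \citet{vaartWellner1996} is stated only for i.i.d.\ sequences), and re-derives the Bahadur-type representation \eqref{eqn:sup_Fni(q+x)} for $F_{ni}$ using Assumption \ref{ass:local_alt}(iii), before re-running the proof of Theorem \ref{theo:stat_perm}. Your one-sentence "alternative" (that the pooled empirical distribution still converges and the $O(n^{-1/2})$ shifts are negligible) points at the correct route, but it is exactly the content that has to be proved, not a shortcut around it — as you yourself suspected in flagging this step as the main obstacle.
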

%After these exhaustive analyses of the tests' asymptotics we now turn to their finite sample properties.

\section{Simulations}\label{sec:sim}

To asses the tests' small sample performance we complement our theoretical findings with numerical comparisons. For ease of presentation, we restrict to
\begin{enumerate}
	\item {\bf A one-way layout} in which we like to infer the null hypothesis $\mathcal{H}_0: \{IQR_1=\dots=IQR_4\}$ of equal IQRs, i.e., as described in Section \ref{sec:con+matr} we choose probabilities $p_1=0.25$ and $p_2=0.75$  and specify the contrast matrix as $H= P_4 \otimes (-1,1)$.
	\item {\bf A $2\times 2$ layout} in which we test for the presence of main or interaction effects measured in terms of medians, i.e.
	setting $k=a\cdot b=2\cdot 2$ we infer the hypotheses
	$\mathcal{H}_0:\{ {\bf H}_{A} {\bf q} = {\bf 0}_{ab}\}$ (no main median effect of factor $A$) and $\mathcal{H}_0:\{ {\bf H}_{AB} {\bf q} = {\bf 0}_{ab}\}$ (no median $A\times B$ interaction effect), see Section \ref{sec:con+matr}.
	%	 testing for a main effect of the factor A concerning the median, where we consider a simple 2x2 design, i.e., the factor A and B consists of 2 levels each. For this purpose, we split up the group index as follows $k= (1,1), (1,2), (2,1),(2,2)$, where the first and second entry of $k=(i_A,i_B)$ represents factor A and B, respectively. Clearly, we have $m=1$ as well as $p=1/2$ in the present situation and we use the contrast matrix $H_A$ from Section \ref{sec:con+matr} with $a=b=2$.

\end{enumerate}
Data was simulated within Model \eqref{eq:model} via $X_{ij}= \mu_i + \sigma_i(\epsilon_{ij}-m_i)\sim F_i, i=1\dots,4, j=1,\dots,n_i$, where we consider (a) balanced as well as unbalanced settings given by sample size vectors $\mathbf{n_1}=(15,15,15,15)$ and $\mathbf{n_2}=(10,10,20,20)$, respectively. (b) five different distributions for $\epsilon_{ij}$ including the standard normal distribution ($N_{0,1}$), Student's t-distribution with $df=2$ and $df=3$ degrees of freedom ($t_2$ and $t_3$), the chi-square distribution with $df=3$ degrees of freedom ($\chi^2_3$) and the standard log-normal distribution ($LN_{0,1}$). All distributions were centered by substracting the respective median $m_i$ from $\epsilon_{ij}$. (c) a homoscedastic setting $\boldsymbol{\sigma_1}=(\sigma_1,\dots,\sigma_4) =(1,1,1,1)$ and heteroscedastic designs given by standard deviation  vectors $\boldsymbol{\sigma_2}=(1,1.25,1.5,1.75)$ and  $\boldsymbol{\sigma_3}=(1.75,1.5,1.25,1)$. In combination with ${\bf n}_2$ the latter represent a positive, respectively, negative pairing.
%\begin{enumerate}[(a)]
%	\item balanced as well as unbalanced settings given by sample size vectors $\mathbf{n_1}=(15,15,15,15)$ and
%	$\mathbf{n_2}=(10,10,20,20)$, respectively.
%	\item five different distributions for $\epsilon_{ij}$ including the standard normal distribution ($N_{0,1}$), Student's t-distribution with $df=2$ and $df=3$ degrees of freedom ($t_2$ and $t_3$), the chi-square distribution with $df=3$ degrees of freedom ($\chi^2_3$) and the standard log-normal distribution ($LN_{0,1}$). All distributions were centered by substracting the respective median $m_i$ from $\epsilon_{ij}$
%	\item \label{enu:sim_settings_sigma} a homoscedastic setting $\boldsymbol{\sigma_1}=(\sigma_1,\dots,\sigma_4) =(1,1,1,1)$ and heteroscedastic designs given by standard deviation  vectors $\boldsymbol{\sigma_2}=(1,1.25,1.5,1.75)$ and  $\boldsymbol{\sigma_3}=(1.75,1.5,1.25,1)$. In combination with ${\bf n}_2$ the latter represent a positive, respectively, negative pairing.
%\end{enumerate}

The simulations were conducted by means of the computing environment R \citep{R}, version 3.5.0, generating $N_{\text{sim}}=5000$ simulation runs and $N_{\text{perm}}=1999$ permutation iterations for each setting. The nominal level was set to $\alpha = 5\%$. We compare the type-1 error rate as well as the power values of our tests in Sections \ref{sec:sim_typ1} and \ref{sec:sim_power}, respectively. In both cases, we include all three variance estimation strategies introduced in Sections \ref{sec:kernel}--\ref{sec:boot}. For the kernel density estimation, we choose the classical Gaussian kernel with a bandwidth according to Silverman's rule-of-thumb \citep[Eq. (3.31)]{silverman:1986},  where we applied the function \textit{bw.nrd0} from the R package \textit{stats} to determine the latter.

In case of the $2\times 2$-median design these methods are additionally compared with the current state-of-the art tests for regression parameters in quantile regression: from the R package \textit{quantreg} \citep{koenker2012quantile} we 
% consists of various methods for quantile regression, including  In the simulations regarding the $2\times 2$-median designs, 
choose the rank inversion method by \citet{koenkerMACHADO1999} for non-iid errors, the default choice in \textit{quantreg}, and the wild bootstrap approach of \citep{fengETAL2011wild}. For a fair comparison, we include the main factors $A$ and $B$ and their interaction in the respective regression model. Hence, regression parameters $\beta_A$, $\beta_B$ and $\beta_{AB}$ are estimated, and corresponding $p$-values for testing $\mathcal H_0:\beta_A = 0$ (no main effect A) and $\mathcal H_0:\beta_{AB}=0$ (no interaction effect), are derived by both \textit{quantreg} approaches.

\subsection{Type-1 error}\label{sec:sim_typ1}

In this subsection, we discuss the type-I-error control of all procedures.
To simulate under the corresponding null hypotheses, we set $\mu_i=\mu_{i_1i_2}=0$ in the $2\times2$-median-based cases and restrict to the homoscedastic setting $\boldsymbol{\sigma} = \boldsymbol{\sigma}_1$ for the $4$-sample IQR testing question. The standard error of the estimated sizes in case of $N=5000$ simulation runs is $0.3\%$ if the true type-I-error probability is $5\%$, i.e., estimated sizes outside the interval [$4.4\%$,$5.6\%$] deviate significantly from the nominal $5\%$ significance level.

The observed type-1 error rates for the $2\times 2$-median design are displayed in Table~\ref{tab:median_null} for testing the hypothesis of no main effect. It is readily seen that all asymptotic tests are rather conservative with type-$I$-errors reaching down to
$1.7\%$ for the bootstrap-based and $0.7\%$ for the interval-based approaches, respectively.
This conservativeness is less pronounced for the test based upon the kernel density variances estimator that exhibits values between $2.7\%$ and $5.7\%$ and a reasonable good error control in case of the standard normal and $\chi_3^2$ distribution except for the settings with positive variance pairing. In contrast, all permutation methods control the type-$I$-error level reasonably well except for the situations with a skewed distribution and negative pairing. Here, we find error rates up to $7.2\%$ for the tests based upon the interval- and kernel-based variance estimators. For the two  quantile regression methods from the R package \textit{quantreg} \citep{koenker2012quantile} the observations are diverse: The rank-based approach tends to conservative test decisions in case of unbalanced sample sizes with observed error rates in the range $2.5\% - 3.5\%$. However, in case a balanced homscedastic design with symmetric errors, a slight liberality ($6.4\%-6.9\%$) is detected.  For all other settings the decisions are accurate. In comparison, the wild bootstrap strategy is liberal for almost all balanced settings (with observed error rates up to $7.7\%$) and conservative for all positive pairings ($2.8\% - 3.7\%$). Overall, the permutation procedure that uses a bootstrap variance estimator exhibits the most robust type-$I$-error control with values ranging from $4.7\%-6.4\%$. 

Summarizing the results for the interaction tests presented in the appendix, we get a  similar impression for the wild bootstrap quantile regression strategy and the
six Wald-type procedures. For them, the only major difference is that the permutation methods also exhibit a fairly well error control for the settings with skewed distributions and negative pairing. However, the results for the rank-based quantile regression method are partially different: While the type-1 error rate is still accurate for balanced sample sizes, the decisions become very liberal  in the unbalanced scenarios with 
estimated type-I-error rates between $6.1\%$ and $10.1\%$.

%The values of our permutation 2x2-median tests are mainly between $4\%$ and $6\%$. But in the most complex situation of skewed distributions combined with a negative pairing of the heteroscedasticity and the sample size, i.e. $\mathbf{n_2}$ and $\boldsymbol{\sigma_3}$, the type-1 error rates  While  the asymptotic test based on the kernel density estimation lead to partially conservative decision mainly between $3\%$ and $5\%$, the other two asymptotic tests

The type-1 error rates in the situation of the 4-sample testing problem of equal IQRs are presented in Table~\ref{tab:IQR_null}. Here, the finite sample behavior of the asymptotic tests becomes even more extreme: For the symmetric distributions, the type-1 error rates are between $0.4\%$ and $1.3\%$ for the interval-based estimator and between $0.3\%$ and $1.2\%$ for the bootstrap approach, i.e., very conservative. In contrast, the decisions for the kernel-based method are quite accurate with values between $3.7\%$ and $5.0\%$. Switching to skewed distribution, however, the type-$I$ error rates increase, leading to very liberal decisions  in the log-normal case with values up to $10.2\%$ for the kernel-based and $7.5\%$ for the interval-based tests. Here, only the bootstrap-based method remained very conservative.
In comparison, all permutation counterparts lead to satisfactory type-1 error control close to the $5\%$-level. %target with values between $4.5\%$ and $5.2\%$.

Due to the extreme behavior of the asymptotic tests in this setting, we conducted additional simulation results in the appendix. Therein, all asymptotic tests for equality of IQRs more or less approach the $5\%$ level for larger group-specific sample sizes $n_i\geq 150$.

%values are taken from the folder 2019.10.29_median_null + for the wild quantile reg. from 2020.03.02_median_power_QUANTREG + for the rank QR from 2020.03.16_median_power_QUANTREG
\begin{table}
	\centering
	\caption{Type-1 error rate in $\%$ (nominal level $\alpha = 5\%$) for
		testing the median null hypothesis of no main effect in the $2\times 2$ design for the rank-based (Rank) and wild bootstrap (Wild) quantile regression approach as well as all asymptotic and permutation tests using the interval-based (Int), kernel density (Kern) and bootstrap (Boot) approach for estimating the covariance matrix. Values inside the $95\%$ binomial interval $[4.4,5.6]$ are printed bold.}\label{tab:median_null}
	\begin{tabular}{lll|rrr|rrr|rr|l}
		\multicolumn{3}{c}{}&\multicolumn{3}{c}{Asymptotic} & \multicolumn{3}{c}{Permutation} & \multicolumn{2}{c}{Quantile reg.} & \\
		Distr & $\mathbf{n}$ & $\boldsymbol{\sigma}$   & Int & Kern & Boot & Int & Kern & Boot & Rank & Wild & Setting\\
		\hline		
		$N_{0,1}$ & $\mathbf{n_1}$ & $\boldsymbol{\sigma_1}$ & 2.6 & 4.2 & 3.3 & \textbf{4.9} & \textbf{5.1} & \textbf{5.2} & 6.9 & 6.6 & balanced homosc.  \\
		&  & $\boldsymbol{\sigma_2}$ & 3.0 & \textbf{4.4} & 3.3 & \textbf{5.5} & {5.8} & \textbf{5.5} & 5.7 & 7.3
		& balanced heterosc. \\
		& $\mathbf{n_2}$ & $\boldsymbol{\sigma_1}$ &  2.2 & \textbf{4.8} & 3.6 & \textbf{5.0} & \textbf{5.6} & \textbf{5.6} & 2.5 & 4.1 & unbalanced homosc. \\
		
		&  & $\boldsymbol{\sigma_2}$ &   2.0 & 4.0 & 3.0 & {5.7} & \textbf{4.7} & \textbf{4.7}& 3.5 & 3.6 & positive pairing \\
		&  & $\boldsymbol{\sigma_3}$ &   2.5 & \textbf{5.4} & 4.0 & 6.2 & 6.4 & 6.3 & 3.0 & \textbf{5.2} & negative pairing\\
		\hline
		$t_3$ & $\mathbf{n_1}$ & $\boldsymbol{\sigma_1}$ &  1.7 & 2.7 & 2.3 & \textbf{5.1} & \textbf{5.2} & \textbf{5.2} & 6.4 & \textbf{5.5} & balanced homosc.  \\
		&  & $\boldsymbol{\sigma_2}$ & 2.0 & 2.9 & 2.6 & \textbf{5.5} & \textbf{5.2} &\textbf{4.9} & \textbf{5.6} & 6.0 & balanced heterosc. \\
		& $\mathbf{n_2}$ & $\boldsymbol{\sigma_1}$ & 0.8 & 3.0 & 2.1 & \textbf{4.5} & \textbf{4.5} & 3.5 & 3.2 & \textbf{4.4}  & unbalanced homosc. \\
		&  & $\boldsymbol{\sigma_2}$ & 1.1 & 3.1 & 2.4 & 6.4 & \textbf{4.7} & \textbf{5.2} & 3.1 & 2.8  & positive pairing \\
		&  & $\boldsymbol{\sigma_3}$ & 0.7 & 3.7 & 2.7 & \textbf{5.8} & 6.5 & 6.4 & 3.1 & 3.9 & negative pairing \\	  		
		\hline 		  	
		$LN_{0,1}$ & $\mathbf{n_1}$ & $\boldsymbol{\sigma_1}$ & \textbf{4.9} & 4.0 & 2.0 & \textbf{5.4} & 5.8 & 5.7 & \textbf{4.6} & 6.5 & balanced homosc.  \\
		&  & $\boldsymbol{\sigma_2}$ &   \textbf{5.2} & 3.8 & 1.8 & 5.8 & 5.8 & 6.0 & \textbf{4.6} & 7.6 & balanced heterosc. \\
		
		& $\mathbf{n_2}$  & $\boldsymbol{\sigma_1}$ & 3.1 & 3.0 & 1.7 & \textbf{4.8} & \textbf{4.7} & \textbf{4.8}  & 2.8 & 3.6 & unbalanced homosc. \\
		&  & $\boldsymbol{\sigma_2}$ &  3.4 & 3.4 & 2.1 & {5.9} & \textbf{5.3} & \textbf{5.4} & 3.0 & 3.7 & positive pairing \\
		&  & $\boldsymbol{\sigma_3}$ &  3.8 & \textbf{4.2} & 2.5 & 6.6 & 6.8 & 6.3 & 3.0 & \textbf{5.4} & negative pairing \\  		
		\hline
		$\chi^2_3$ & $\mathbf{n_1}$ & $\boldsymbol{\sigma_1}$ & \textbf{5.1} & \textbf{5.0} & 3.2 & \textbf{5.5} & \textbf{5.6} & 5.8 & 5.7 & 7.7 & balanced homosc.  \\
		&  & $\boldsymbol{\sigma_2}$ &  \textbf{4.4} & \textbf{4.7} & 2.7 & \textbf{5.1} & \textbf{5.5} & \textbf{5.2} & \textbf{5.4} & 7.7 & balanced heterosc. \\
		& $\mathbf{n_2}$ & $\boldsymbol{\sigma_1}$ &  3.6 & \textbf{4.5} & 2.8 & \textbf{5.0} & \textbf{5.1} & \textbf{5.2} & \textbf{3.0} & \textbf{4.8} & unbalanced homosc. \\
		&  & $\boldsymbol{\sigma_2}$ &  3.3 & 3.7 & 2.6 & \textbf{5.1} & \textbf{4.6} & \textbf{4.8} & 3.3 & 3.2  & positive pairing \\
		&  & $\boldsymbol{\sigma_3}$ &  \textbf{4.6} & 5.7 & 3.5 & 7.2 & 7.2 & 6.4 & 3.2 & \textbf{5.6} & negative pairing \\
		\hline
	\end{tabular}
\end{table}

%values are taken from the folder 2019.11.02_variance+comp

%\begin{table}
%	\centering
%	\caption{Type-1 error rate in $\%$ (nominal level $\alpha = 5\%$) for the 4-sample IQR testing problem of our asymptotic and permutation tests using the interval-based (Int), kernel density (Ker) and bootstrap (Boo) approach for estimating the covariance matrix. Values inside the $95\%$ binomial interval $[4.4,5.6]$ are printed bold.}\label{tab:IQR_null}
%	
%	\begin{tabular}{ll|rrr|rrr|l}
%		\multicolumn{2}{c}{}&\multicolumn{3}{c}{Asymptotic} & \multicolumn{3}{c}{Permutation} & \\
%		$\mathbf{n}$ & Distr  & Int & Ker & Boo & Int & Ker & Boo & Setting\\
%		\hline	
%		&$N_{0,1}$ & 1.3 & \textbf{4.6} & 1.2 & {\bf 5.2} & \textbf{5.1} & {\bf 5.2} & \\
%		& $t_2$ &  0.6 & {3.7} & 0.3 & \textbf{5.1} & \textbf{5.1} & \textbf{5.1} &\\
%		$\mathbf{n_1}$ &$t_3$ & 0.9 & {3.7} & 0.6 & {\bf 4.9} & \textbf{5.0} & {\bf 5.3} & balanced\\
%		&$LN_{0,1}$ & {7.5} & 8.5 & 1.6 & {\bf 5.2} & \textbf{4.9} & {\bf 4.6} &\\
%		& $\chi_3^2$ & \textbf{5.1} & 6.2 & 1.8 & {\bf 4.5} & { \bf 4.8} & {\bf 4.7} &\\
%		\hline
%		& $N_{0,1}$ & 1.0 & \textbf{5.0} & 1.1 &{\bf 4.7} &{\bf 4.8} & {\bf 4.9} &\\	
%		& $t_2$ & 0.4 & {\bf 4.8} & 0.7 & \textbf{5.0} & \textbf{5.0} &{\bf 5.2} &\\
%		$\mathbf{n_2}$& $t_3$  & 0.5 & \textbf{4.4} & 0.9 & 4.5 &{\bf 4.6} & \textbf{5.1} & unbalanced\\
%		& $LN_{0,1}$  & {4.3} & 10.2 & 1.6 &{\bf 4.8} & \textbf{5.2} &{\bf 4.7} &\\
%		& $\chi_3^2$ & {3.8} & 8.1 & 1.7 &{\bf 5.1} & \textbf{5.0} &{\bf 4.8} &\\
%		\hline
%	\end{tabular}
%\end{table}

\begin{table}
	\centering
	\caption{Type-1 error rate in $\%$ (nominal level $\alpha = 5\%$) for the 4-sample IQR testing problem of our asymptotic and permutation tests using the interval-based (Int), kernel density (Ker) and bootstrap (Boo) approach for estimating the covariance matrix. Values inside the $95\%$ binomial interval $[4.4,5.6]$ are printed bold.}\label{tab:IQR_null}
	
	\begin{tabular}{l|rrr|rrr||rrr|rrr}
		\multicolumn{1}{c}{} & \multicolumn{6}{c}{$\mathbf{n}_1$ (balanced)} & \multicolumn{6}{c}{$\mathbf{n}_2$ (unbalanced)} \\ 
		\multicolumn{1}{c}{}&\multicolumn{3}{c}{Asymptotic} & \multicolumn{3}{c}{Permutation} & \multicolumn{3}{c}{Asymptotic} & \multicolumn{3}{c}{Permutation} \\
		Distr  & Int & Ker & Boo & Int & Ker & Boo & Int & Ker & Boo & Int & Ker & Boo \\
		\hline	
		$N_{0,1}$ & 1.3 & \textbf{4.6} & 1.2 & {\bf 5.2} & \textbf{5.1} & {\bf 5.2} &  1.0 & \textbf{5.0} & 1.1 &{\bf 4.7} &{\bf 4.8} & {\bf 4.9} \\
		$t_2$ &  0.6 & {3.7} & 0.3 & \textbf{5.1} & \textbf{5.1} & \textbf{5.1} & 0.4 & {\bf 4.8} & 0.7 & \textbf{5.0} & \textbf{5.0} & {\bf 5.2}\\
		$t_3$ & 0.9 & {3.7} & 0.6 & {\bf 4.9} & \textbf{5.0} & {\bf 5.3} & 0.5 & \textbf{4.4} & 0.9 & 4.5 &{ \bf 4.6} & \textbf{5.1} \\
		$LN_{0,1}$ & {7.5} & 8.5 & 1.6 & {\bf 5.2} & \textbf{4.9} & {\bf 4.6} & {4.3} & 10.2 & 1.6 &{\bf 4.8} & \textbf{5.2} & {\bf 4.7}\\
		$\chi_3^2$ & \textbf{5.1} & 6.2 & 1.8 & {\bf 4.5} & {\bf 4.8} & {\bf 4.7} & {3.8} & 8.1 & 1.7 &{\bf 5.1} & \textbf{5.0} & {\bf 4.8}\\
		\hline
	\end{tabular}
\end{table}

\subsection{Power behavior under shift and scale alternatives}\label{sec:sim_power}
Due to the diverse behavior of the asymptotic tests and the rank-based quantile regression method under the null hypotheses and for ease of presentation, we solely focus on permutation tests and the wild bootstrap quantile regression strategy here. The results for the asymptotic tests are presented in the appendix and apart from their different level under $ \mathcal{H}_0$, their power curves run almost parallel to the respective curve of the permutation version.

\begin{figure}
	%graphics were generated in the folder 2020.03.02_median_power_QUANTREG
	\centering
	\underline{\text{Median}}\\
	\includegraphics[ width =0.43\textwidth]{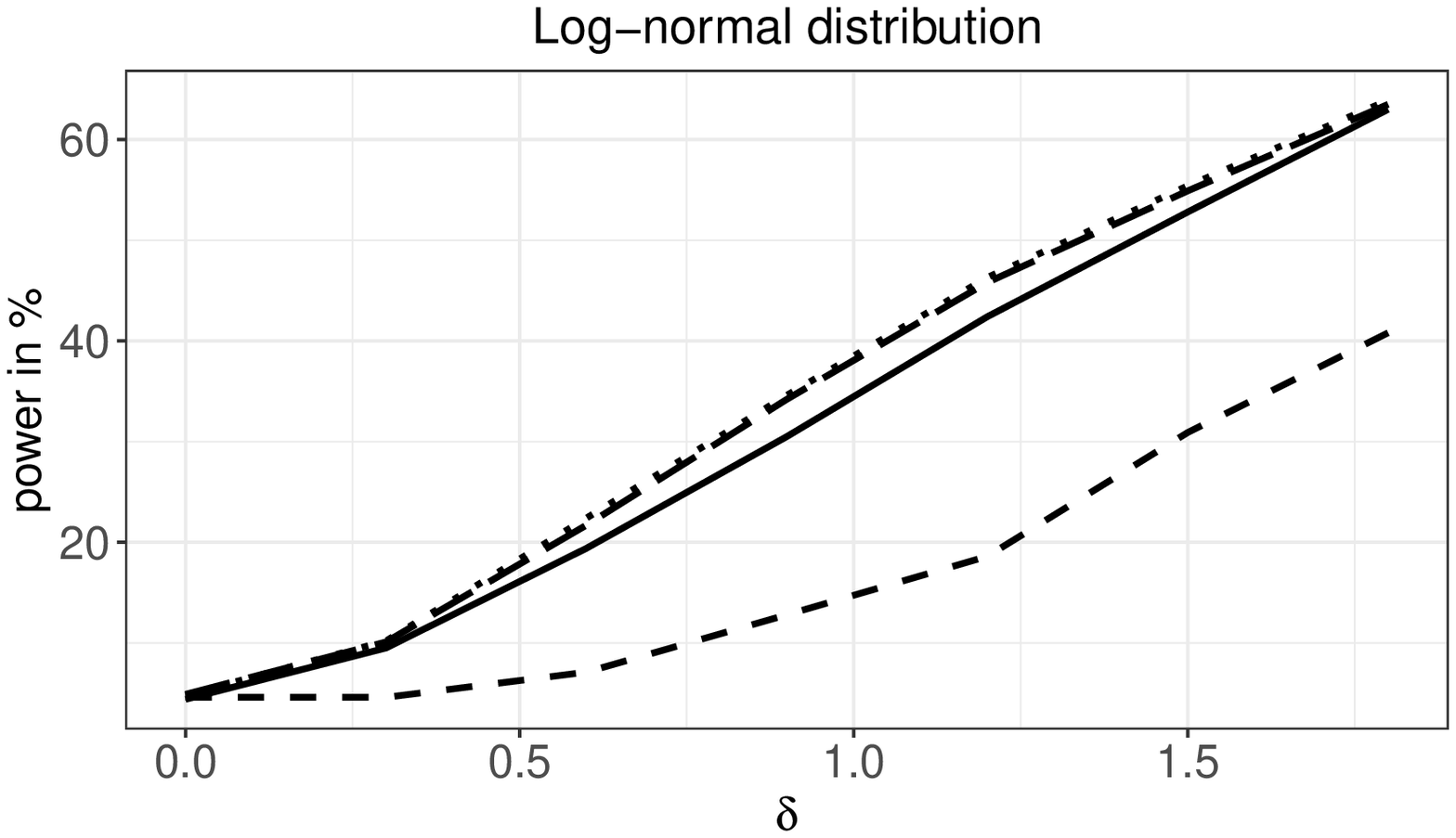}\qquad
	\includegraphics[ width = 0.43\textwidth]{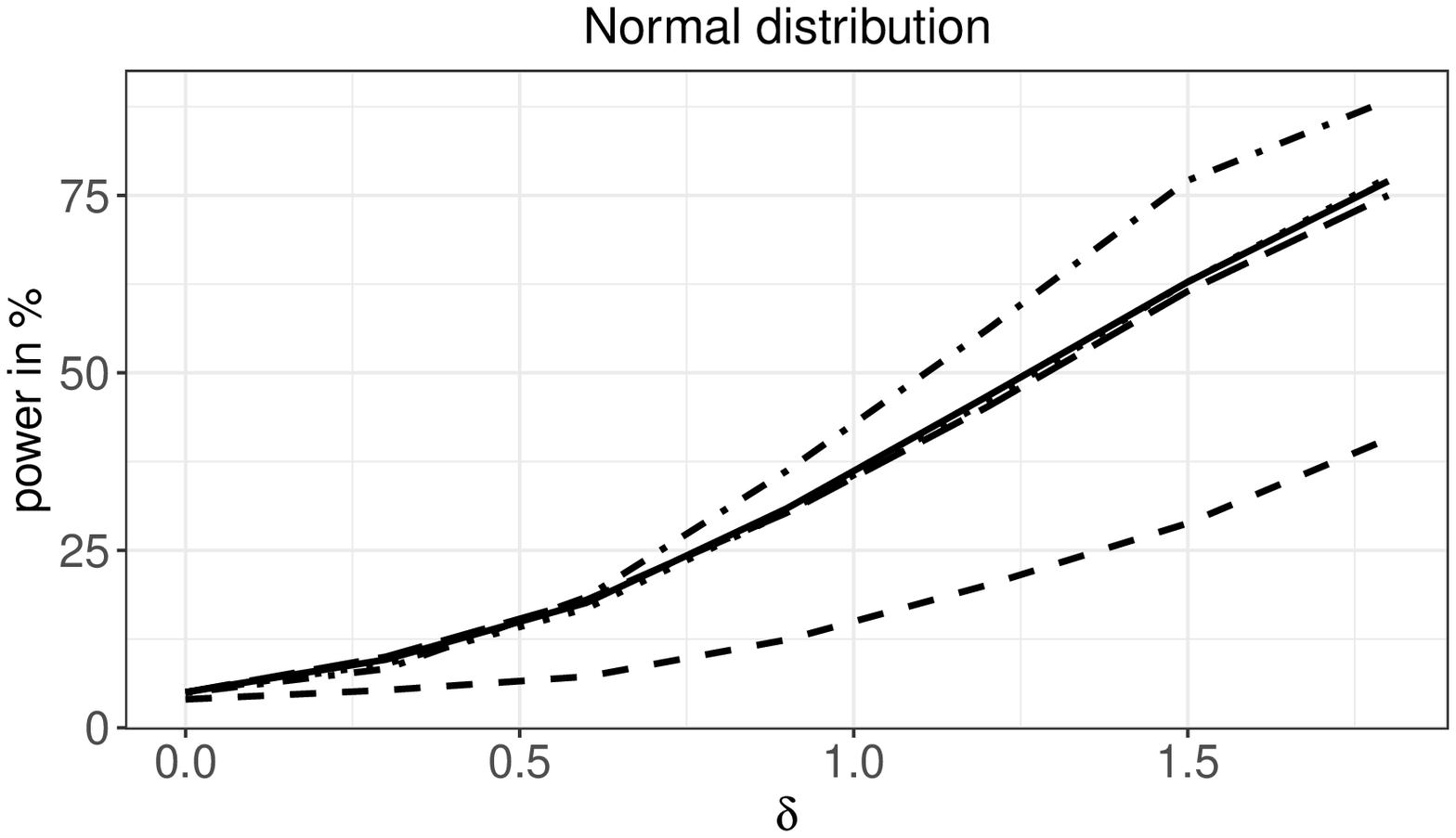}\\
	\includegraphics[ width =0.43\textwidth]{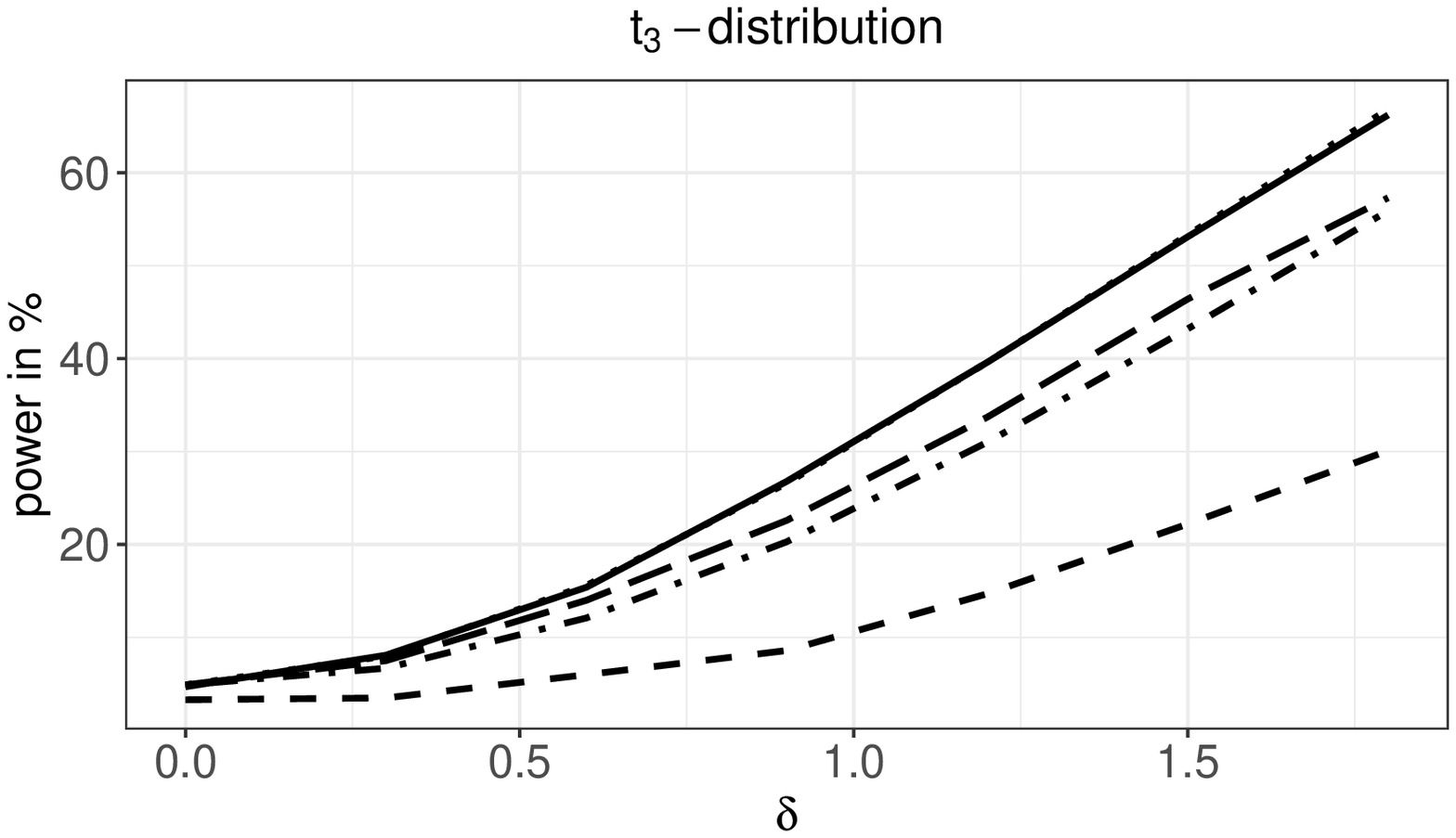}\qquad
	\includegraphics[ width = 0.43\textwidth]{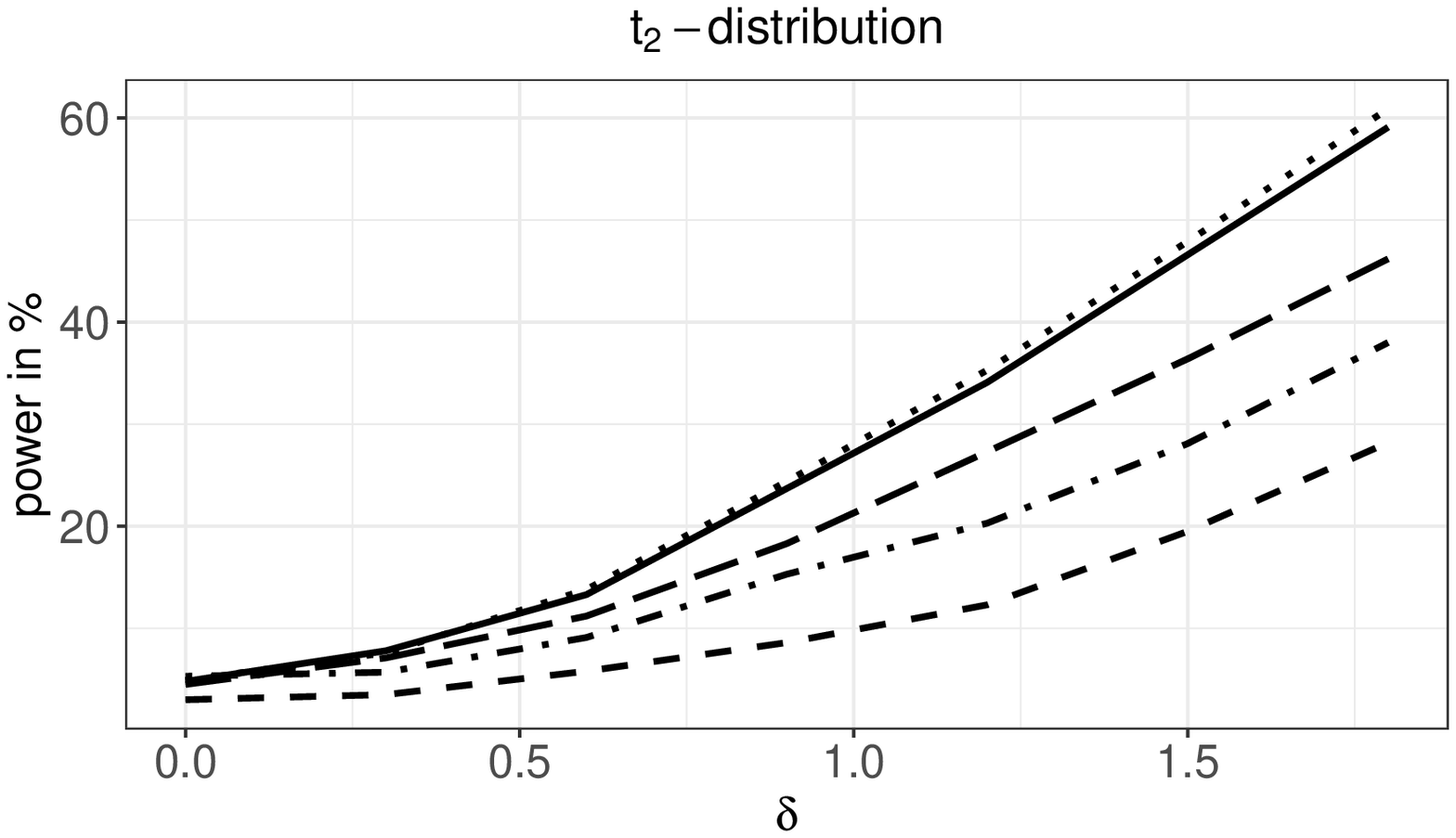} \\
	\underline{\text{IQR}}\\
	\includegraphics[ width =0.43\textwidth]{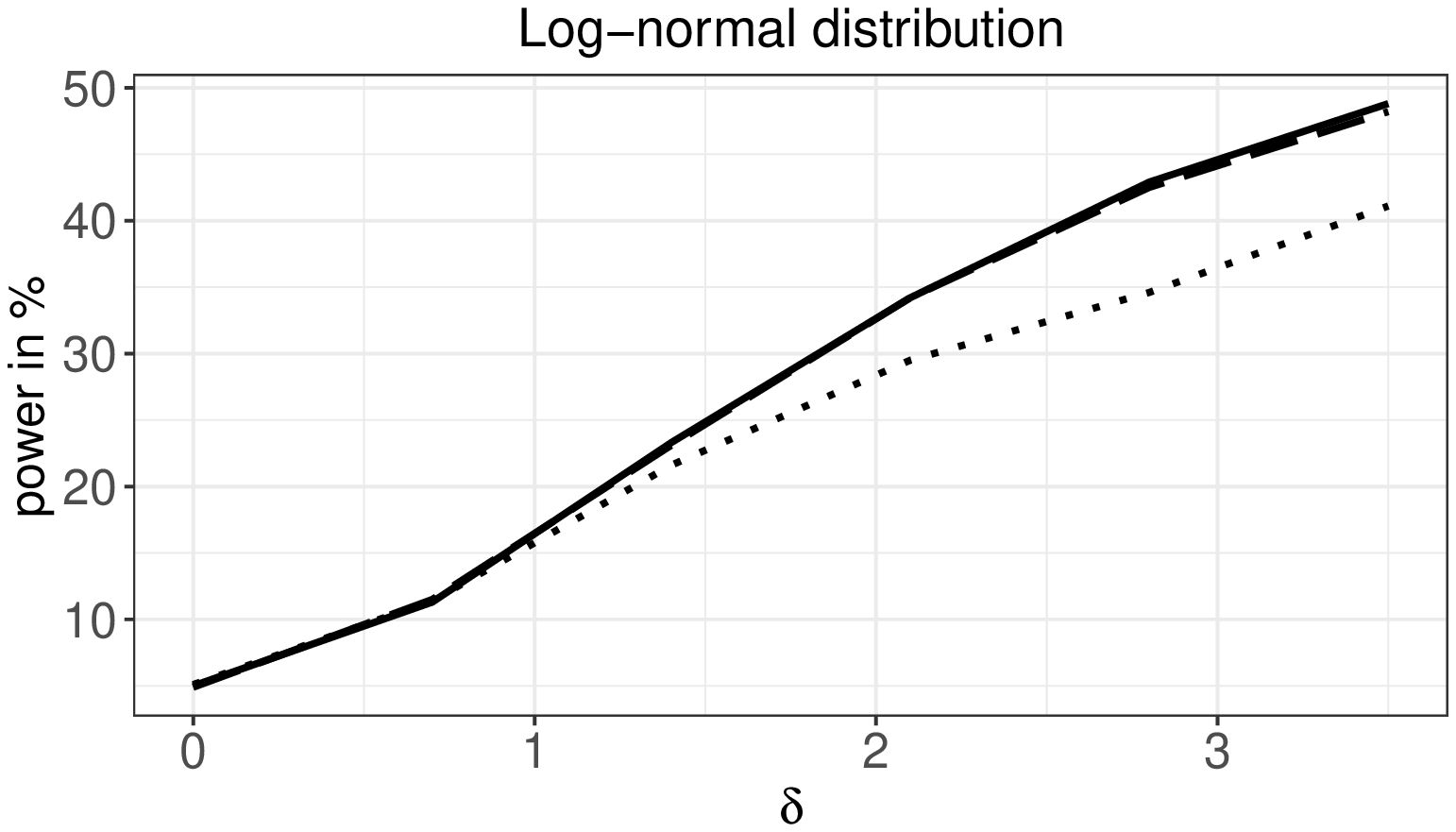}\qquad
	\includegraphics[ width = 0.43\textwidth]{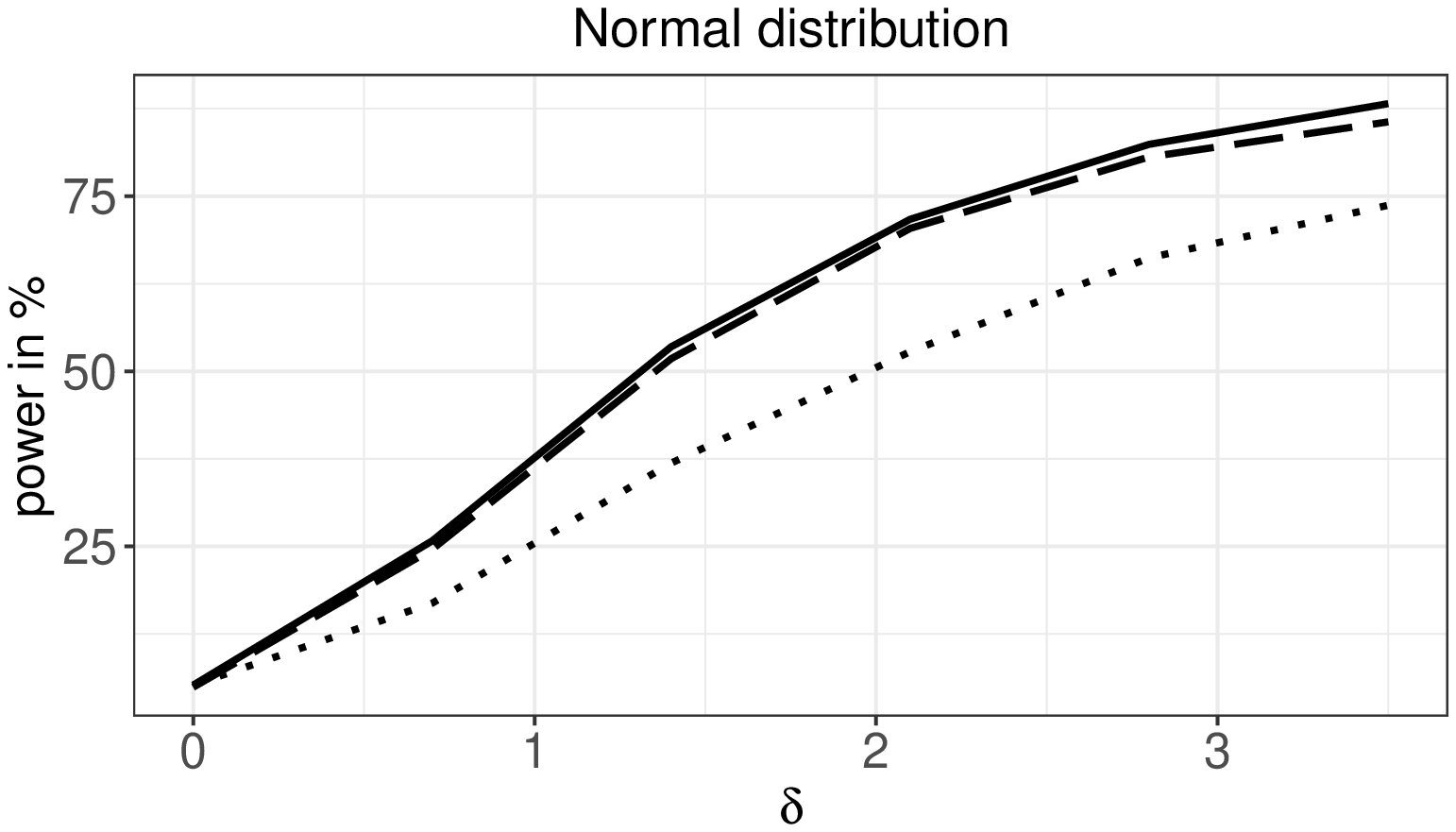}\
	\caption{Power curves for the $2\times 2$-median testing problem (first two rows) and for the 4-sample IQR testing problem (last row) of the permutation PBK test (dash-dotted), the wild bootstrap quantile regression test (dashed), and the three permutation tests based on interval-based (long-dashed), kernel density (dotted) and bootstrap (solid) covariance matrix estimation, resp., for $\mathbf{n}=\mathbf{n_2}$, $\boldsymbol{\sigma}=\boldsymbol{\sigma_1}$ and shift alternatives $\boldsymbol{\mu}=(0,0,0,\delta)$ (median) or scale alternatives $\boldsymbol{\sigma}=(1,1,1,1+\delta)$ (IQR) }\label{fig:median_power} \label{fig:IQR_power}
\end{figure}

To achieve a scenario under the alternative in the $2\times 2$-median test setting, we disturbed the respective null set-up by adding a shift parameter $\delta=\mu_{2,2}$ to the last group. In addition to the three proposed permutation tests and the wild bootstrap approach in the context of quantile regression, we considered the permutation Wald-type test (PBK) of \citet{paulyETAL2015} which was developed for testing means in general factorial designs. Their procedure is implemented in the R package \textit{GFD} \citep{friedrich2017gfd}.
For a fair comparison, we included their PBK test just for the cases where
mean and median coincide, i.e., for the symmetric distributions. The results for the procedures inferring a main effect are presented in Figure~\ref{fig:median_power}, while the corresponding power curves of the interaction tests are shown in the appendix. Studying Figure~\ref{fig:median_power}, we observe that the PBK test leads to higher power values compared to our tests for the normal distribution settings but is less powerful under the $t_2$- and $t_3$-distributions. An explanation may be given by the (asymptotic) efficiencies of the location estimators: While the sample mean is more efficient than the sample median under normal distributions  the situation is reversed for the two more heavy-tailed $t$-distributions. 
%The PBK test is theoretically only valid when second moment exists, which is not the case for the $t_2$-distribution.
A comparison among the three median-based permutation tests shows that the interval-based approach leads to lower power values than the other two methods for both $t$-distributions, while the bootstrap approach is slightly less powerful than the other two tests in case of the skewed log-normal distribution. Under normality, however, the tests' power functions are almost identical. 
In comparison, the wild bootstrap quantile regression method has considerably less power than all other methods for testing main median effects. The power curves for the interaction effects presented in the appendix show a similar pattern for almost all tests. The only exception is the 
wild bootstrap approach which exhibits a similar power behavior as the permutation tests. Moreover, it is is even slightly advantageous for shift alternatives with $\delta>1$.

To obtain alternatives in case of the 4-sample IQR testing problem, we consider scale alternatives of the kind $\boldsymbol{\sigma}=(1,1,1,1+\delta)$. For ease of presentation we only show the results for normal as well as lognormal distributions here. The resulting  power curves are plotted in Figure~\ref{fig:IQR_power}. We can observe that the kernel density approach leads to lower power values compared to the other two methods; especially in case of small sample sizes.

\begin{figure}
	%graphics were generated in the folder 2019.10.04_variance+comp
	\centering
	\includegraphics[ width =0.48\textwidth]{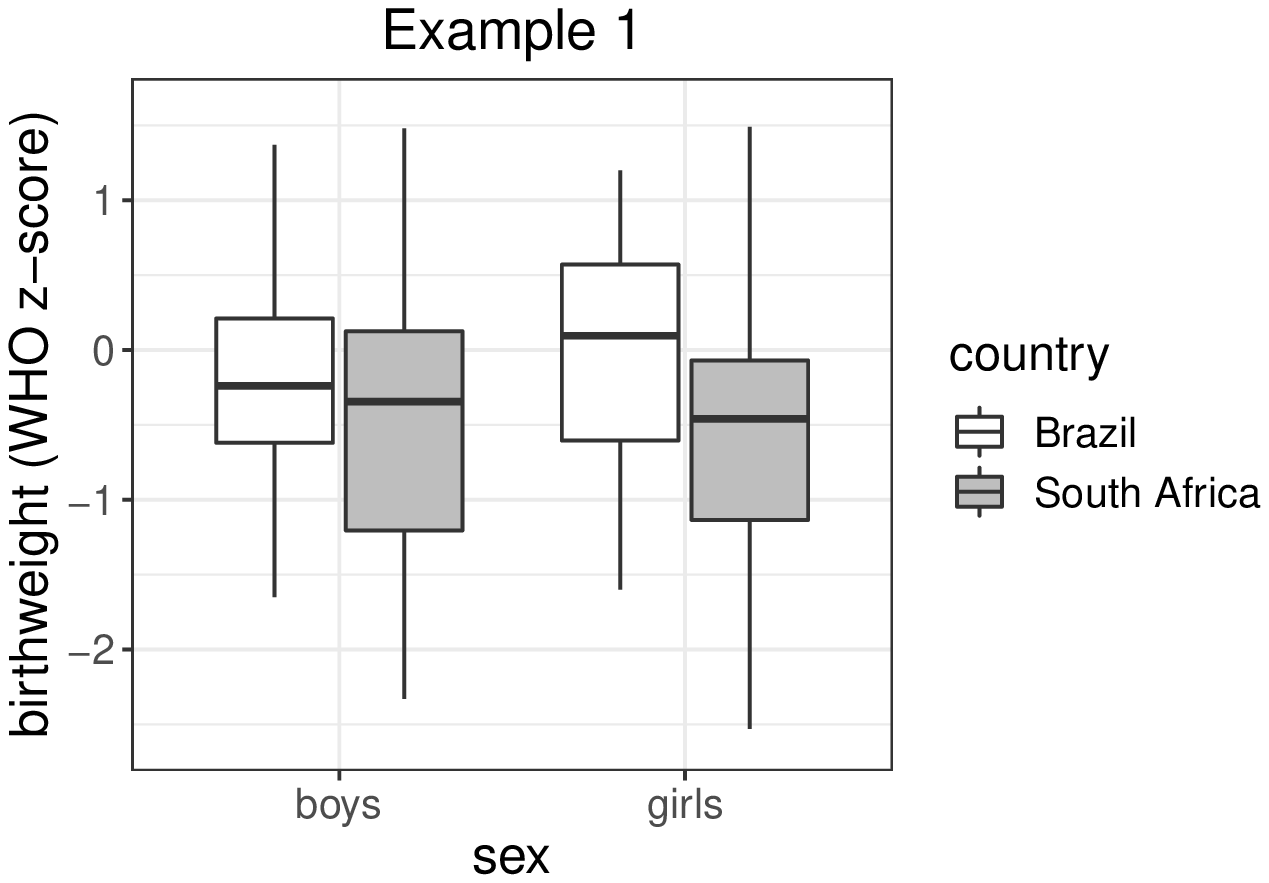}\quad
	\includegraphics[ width = 0.48\textwidth]{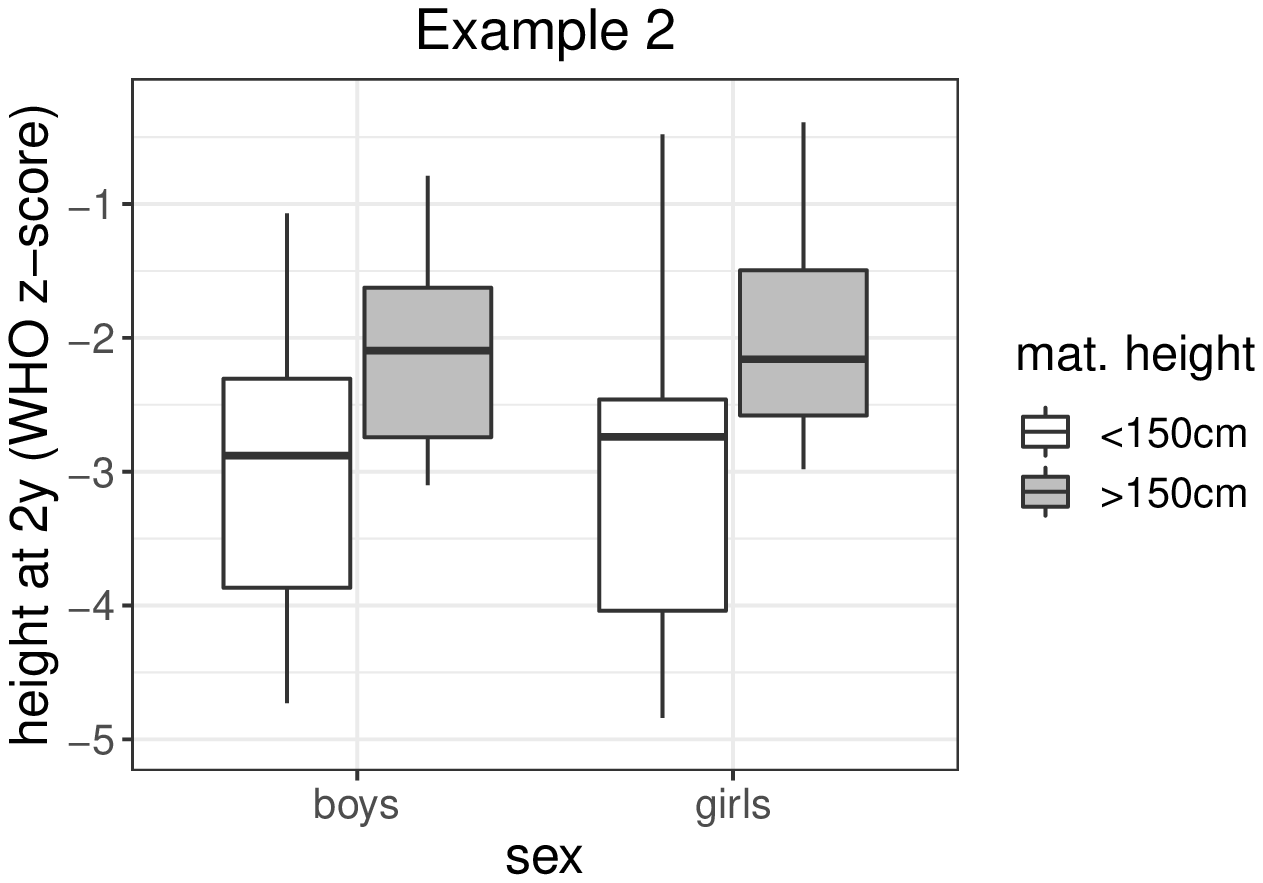}
	\caption{Group-wise boxplots (outliers are not displayed) for the birthweight data from Example 1 (left) and the height data from Example 2 (right).} \label{fig:boxplot}
\end{figure}

\noindent {\bf Recommendation.} Summarizing the findings we recommend the use of the permutation methods over their asymptotic counterparts as they show a much better type-$I$-error control in case of small and moderate sample sizes ($n_i\leq 200$). However, there is no general recommendation for choosing between the three permutation versions as their power behavior (slightly) differed with respect to underlying settings, e.g. for comparing IQRs the interval- and bootstrap-based approaches performed better while the kernel method exhibit the largest power for testing medians in a $2\times 2$ design with heavy tails.

\section{Illustrative data analysis}\label{sec:real_data}

A typical everyday situation in which we are confronted with quantiles are {\it percentile curves} for child heights and weights.
We re-analyzed growth and weight data of children from 5 different sites (Brazil, India, Guatemala, the Philippines, and South Africa), which was provided to us by the COHORTS group \citep{richter:ETAL:2011}. Both, height and weight, were converted to z-scores regarding the WHO child standards \citep{WHO,deOnis:ETAL:2007_WHO}. Having a comparison of percentile curves in mind, we test for effects in three quantiles simultaneously, considering the $25\%$-, $50\%$- and $75\%$-quantile. In addition, this also demonstrates the flexibility of the proposed methodology.
For illustrative purposes we focus on specific subgroups in the COHORT data set in which interesting effects are present:

\textit{Example 1:} We compare the birth weight of firstborns from the countries (factor A) Brazil and South Africa including both genders (factor B). To avoid confounding effects regarding age, education or marital status, we restrict our analysis to 30-year-old or younger married mothers with a comparable education level of 9 completed school years. The resulting $n=173$ children is divided into $n_1=65$ boys and $n_2=46$ girls from Brazil, and $n_3=36$ boys and $n_4=26$ girls from South Africa. We would like to infer whether there are differences between the countries regarding the boys' and girls' birth weight, respectively.

\textit{Example 2:} We investigate the effect of the mother's height (factor A) on the children's height at the age of $2$ years. Both sexes (factor B) are included. We restrict to firstborns of unmarried mothers from the Philippines. For this analysis, we divide the women into the groups "small"  and "tall" consisting of the women respectively being smaller and taller than the median height of $150$cm. The group "small" consists of data for $n_1=8$ boys and $n_2=13$ girls, and in the group "tall" there are data for $n_3=12$ boys and $n_4=11$ girls.

To get a first graphical impression, the group-specific box plots are presented in Figure~\ref{fig:boxplot}.
In both cases it appears that factor A (country and maternal height, respectively) leads to a shift of all three empirical quantiles of the children's height and weight. To infer this conjecture we like to check for a main effect of factor $A$ regarding the three quantiles $\mathbf{q}_{i} = (q_{i1}, q_{i2},q_{i3})^\top, i=1,\dots,4$ corresponding to the probabilities $(p_1,p_2,p_3)=(0.25,0.5,0.75)$ simultaneously. That is, we test $\mathcal{H}_0: \{ \mathbf{q}_{1} + \mathbf{q}_{2} = \mathbf{q}_{3} + \mathbf{q}_{4}\}$. The $p$-values of all three asymptotic and permutation tests (ignoring multiplicity) are summarized in Table~\ref{tab:data_ex}.
\begin{table}[ht]
	\centering
	\caption{For the effect of the country on the birth weight (Example 1) and the maternal height on the height at 2 years, the p-values (in $\%$) are shown for our asymptotic and permutation approach using the interval-based (Int), kernel density (Ker) and bootstrap
		(Boo) strategies for covariance matrix estimation} \label{tab:data_ex}
	\begin{tabular}{r|rrr|rrr}
		\multicolumn{1}{c}{}&\multicolumn{3}{c}{Asymptotic} &  \multicolumn{3}{c}{Permutation}  \\
		& Int & Ker & Boo  & Int & Ker & Boo \\
		\hline
		Example 1 & 10.54 & 8.63 & 9.83 & 3.80 & 4.60 & 3.50  \\
		Example 2 & 10.95 & 9.19 & 8.43 & 3.30 & 6.75 & 3.60  \\
	\end{tabular}
\end{table}

It is apparent that the asymptotic and permutation test lead to different decisions at nominal level $\alpha=5\%$.
In fact, the seemingly present effect from Figure~\ref{fig:boxplot} is not detected by any asymptotic tests as their
$p$-values are around $8$--$10\%$. In contrast, the p-values of the permutation approaches are, except for the kernel density method in Example 2, less than $5\%$. To investigate the reasons why these decisions are so different, we conducted an additional simulation study for the three-quantile testing problem under the sample size settings of Example 2. The results are presented in the appendix and may  explain the above decisions to some extent. They can be summarized as follows: As in Section~\ref{sec:sim}, the asymptotic tests are quite conservative with type-1 error rates ranging between $0.8\%$ and $4.2\%$. Moreover, the permutation kernel density approach is less powerful than the other two permutation methods under shift alternatives for skewed distributions.% These two observations may explain why all three asymptotic tests as well as the permutation kernel density approach in case of Example 2 do not reject the null hypothesis.

Beyond hypothesis testing, the theoretical results can also be used to formulate asymptotically valid confidence regions for contrasts of quantiles by inverting the corresponding tests. We exemplify this for the difference between two quantiles as effect parameter of interest. %In case of the latter, the permutation procedure is also available.
To this end, consider Example 1 and encode factor A (country) and factor B (gender) as follows: $i_2=1$ for the boys, $i_2=2$ for the girls, $i_1=1$ for Brazil and $i_1=2$ for South Africa. Then, for a fixed gender $i_2$, the asymptotic correct $z$- and permutation-$(1-\alpha)$-confidence intervals for the difference $\theta_{i_2} = q_{2i_2} - q_{1i_2}$ of the countries' quantiles (e.g. the medians) are 
\begin{align*}
&I_n = \Bigl[ (\widehat q_{2i_2} -\widehat q_{1i_2}) \pm  \frac{z_{\alpha/2} }{\sqrt{n} }\sqrt{\widehat \sigma_{1i_2}^2 +  \widehat\sigma_{2i_2}^2}   \Bigr], \quad
I_n^\pi = \Bigl[ (\widehat q_{2i_2} -\widehat q_{1i_2}) \pm  \frac{c_{ni_2}^\pi(\alpha/2) }{\sqrt{n} }\sqrt{\widehat \sigma_{1i_2}^2 +  \widehat\sigma_{2i_2}^2}   \Bigr],
\end{align*}
where $\widehat \sigma_{i_1i_2}^2 =  \boldsymbol{\widehat\Sigma}^{(i_1i_2)}_{11}$ is an estimator for the asymptotic variance of $\sqrt{n}(\widehat q_{i_1i_2} - q_{i_1i_2})$ using one of our strategies from Section \ref{sec:kernel}--\ref{sec:PB} and $c_{ni_2}^\pi(\alpha/2)$ is the $(1-\alpha/2)$-quantile of the permutation distribution of $\sqrt{n}(\widehat q_{2i_2}^\pi - \widehat q_{1i_2}^\pi)(\widehat \sigma_{1i_2}^{2,\pi} +  \widehat\sigma_{i_2}^{2,\pi})^{-1/2}$. To illustrate the application we calculated the $95\%$ permutation-based confidence-intervals for the median difference separately for gender in Table~\ref{tab:data_conf_int}. Ignoring multiplicity, we see that all three permutation procedures agree on a significant difference in the girl's median birthweight (at level $\alpha=5\%$) but do not find a corresponding effect for the boys.

\begin{table}[ht]
	\centering
	\caption{Point estimates $\widehat\theta$ for the difference $\theta_{i_2} = q_{2i_2} - q_{1i_2}$ of the countries' median with respect to sex for Example~1 together with permutation-based $95\%$ confidence intervals. Here, Int (interval-based), Ker (kernel density) and Boo (bootstrap) indicate the applied covariance matrix estimation technique.
	} \label{tab:data_conf_int}
	\begin{tabular}{c|cccc}
		%		\multicolumn{2}{c}{}&\multicolumn{1}{c}{} &  \multicolumn{3}{c}{Permutation confidence intervals}\\
		Gender & $\widehat \theta$ & Int & Ker & Boo \\
		\hline
		Boys& $\shortminus$0.15 & [$\shortminus$0.44, 0.14] & [$\shortminus$0.48, 0.18] & [$\shortminus$0.43, 0.13]  \\[0.5em]
		Girls & $\shortminus$0.56 &  [$\shortminus$1.04, $\shortminus$0.08] & [$\shortminus$1.10, $\shortminus$0.02]& [$\shortminus$1.02, $\shortminus$0.10] \\[0.5em]
		\hline
	\end{tabular}
\end{table}

\section{Discussion}\label{sec:dis}
While an abundance of methods exists for inferring means and mean vectors in general heterogeneous factorial designs \citep{johansen:1980, brunner:dette:munk:1997, bathke:schabenberger:tobias:madden:2009, zhang:2012, konietschke2015parametric, paulyETAL2015, harrar2019comparison},
there are not so many methods for the analysis of medians or quantiles. To this end, we combined the idea of studentized permutations from heteroscedastic mean-based \citep{paulyETAL2015} and one-way median-based ANOVA \citep{chungRomano2013} to establish flexible methods for inferring quantiles in general factorial designs which we coin QANOVA. In fact, we proposed three different permutation methods in Wald-type statistics that only differ in the way the covariance matrix is estimated. All of them are applicable to construct confidence regions and to test null hypotheses about arbitrary contrasts of different quantiles.

The resulting procedures are finitely exact under exchangeability of the data and shown to be asymptotically valid. In doing so, we had to extend some results about general permutation empirical processes and uniform Hadamard differentiability \citep{vaartWellner1996} that are of own mathematical interest. From these results we could not only deduce the asymptotic exactness under the null hypotheses but also prove results about the procedures' asymptotics under fixed and local alternatives. In the special case of the median and a bootstrap-based covariance estimator, these results even reveal new insights into the \citet{chungRomano2013} one-way permutation test.

In addition to these theoretical analyses, we also analyzed the procedures in extensive simulations presented in the paper and the appendix. Our results indicate an accurate type-$I$-error control for the permutation methods in almost all simulation settings.  Only in case of skewed distributions and small unbalanced samples with a heteroscedastic negative pairing, a slight liberality was found when testing for main effects in a $2\times 2$ design. Beyond this, we can recommend all three permutation methods with clear conscience and we are currently working on implementing them within an \textsc{R}-package. %\bf GFD} \citep{friedrich2018gfd}.
Moreover, we are also confident that the current results can be transferred to questions about related quantile-based estimands as, e.g., coefficients of quartile variation \citep{bonett2006}.

\appendix

\section{Additional simulation results}
To compare the asymptotic and permutation tests in terms of power, we display in Table~\ref{tab:median_power} the results for the shift alternatives described in Section~\ref{sec:sim_power}. For a fair comparison of the power values, we include also the sample size corrected versions of the asymptotic tests, i.e., we estimated the finite sample size quantile of the test statistic by $5000$ Monte--Carlo iterations under the respective null hypothesis ($\delta=0$). Of course, the sample size corrected tests serve just as (fairer) competitors but can not be applied in practice because they require the prior knowledge of the underlying null hypotheses. Except under log-normal distributions, the conservative type-error rates of the asymptotic tests cause lower power values compared to the permutation approaches, where the power values of the latter are, in almost all settings, very close to the one of the sample size corrected tests.

\begin{table}
	% values are from the folder 2019.10.08_median_power
	\centering
	\caption{Power values in $\%$ (nominal level $\alpha = 5\%$) for the $2\times 2$-median testing problem of the permutation Wald-type test (PKB) of \cite{paulyETAL2015} as well as our asymptotic and permutation tests using the interval-based (Int), kernel density (Ker) and bootstrap (Boo) approach for estimating the covariance matrix under $\mathbf{n}=\mathbf{n_2}$ and $\boldsymbol{\sigma} = \boldsymbol{\sigma_1}$ for shift alternatives $\boldsymbol{\mu}=(0,0,0,\delta)$}\label{tab:median_power}
	\begin{tabular}{ll|rrrr|rrrr|rrrr}
		\multicolumn{2}{c}{}&\multicolumn{4}{c}{Asymptotic} & \multicolumn{4}{c}{Permutation} & \multicolumn{4}{c}{Size corrected}  \\
		Distr & $\delta$  & Int & Ker & Boo & PBK & Int & Ker & Boo & PBK & Int & Ker & Boo & PBK  \\
		\hline
		$N_{0,1}$ &  0 &  2.2 & 3.8 & 2.9 & {5.2} & {4.6} & 4.1 & 4.2 & 4.2 & {5.4} & 4.6 & 4.5 & 5.3 \\
		&  0.3 &  3.9 & 7.9 & 5.8 & {9.1} & 8.3 & {8.6} & {8.6} & 8.2 & 9.7 & 9.1 & {9.8} & 8.2 \\
		&  0.6 &  8.4 & 14.6 & 12.3 & {20.6} & 17.9 & 16.8 & 17.8 & {19.0} & 17 & 15.7 & {18.3} & 17.3 \\
		&  0.9 &  17.9 & 28.5 & 25.3 & {37.4} & 31.4 & 31.8 & 32.3 & {35.4} & 31.2 & 29.8 & 31.5 & {32.9} \\
		&  1.2 &  29.5 & 43.4 & 39.4 & {59.8} & 44.0 & 45.1 & 46.3 & {57.3} & 43.4 & 44.4 & 45.4 & {55.6} \\
		&  1.5 &  47.5 & 60.1 & 55.5 & {78.2} & 61.0 & 63.0 & 62.1 & {76.5} & 61 & 61.7 & 61.8 & {77.0} \\
		&  1.8 &  61.5 & 75.4 & 70.2 & {90.1} & 74.6 & 76.6 & 76.3 & {89.2} & 77.4 & 76 & 76.2 & {89.5} \\
		\hline
		$t_2$ &  0 &  0.6 & 2.6 & 1.4 & {3.7} & 4.6 & {4.7} & {4.7} & 4.3 & {4.8} & 4.6 & 4.3 & 4.2 \\
		&  0.3 &  0.7 & 3.4 & 2.5 & {4.0} & 7 & {7.4} & {7.4} & 4.6 & 6.1 & {7.7} & 7.6 & 4.3 \\
		&  0.6 &  2.3 & {9.4} & 7.1 & 9.0 & 11.6 & 13.7 & {14.2} & 9.3 & 12. & {14.4} & 13.8 & 9.6 \\
		&  0.9 &  3.7 & 13.6 & 10.6 & {13.9} & 17.6 & {20.3} & 19.9 & 14.6 & 17.9 & {19.8} & {19.8} & 14.1 \\
		&  1.2 &  7.0 & {25.8} & 20.3 & 21.8 & 25.1 & {34.2} & 32.0 & 22.9 & 27.7 & {33.7} & 32.8 & 22.6 \\
		&  1.5 &  14.2 & {39.4} & 31.8 & 28.5 & 37.5 & {48.5} & 47.1 & 29.2 & 41.0 & {50.3} & 49.0 & 30.9 \\
		&  1.8 &  20.5 & {54.6} & 46.6 & 38.1 & 47.7 & {60.7} & 59.4 & 39.4 & 54.6 & {65.4} & 63.5 & 38.1 \\
		\hline
		$t_3$ &  0 &  1.1 & 3.8 & 2.6 & {5.9} & {6.0} & {6.0} & 5.6 & {6.0} & {6.5} & 5.8 & 5.5 & 5.0 \\
		&  0.3 &  2.2 & 5.4 & 5.0 & {7.3} & 9.0 & 8.2 & {9.4} & 7.0 & 9.1 & 8.5 & {9.5} & 7.0 \\
		&  0.6 &  3.7 & 10.6 & 9.4 & {13.6} & 14.6 & {16.5} & 16.2 & 13.1 & 13.6 & 14.6 & {15.7} & 13.5 \\
		&  0.9 &  8.0 & {20.8} & 16.9 & 20.5 & 23.2 & {26.6} & {26.6} & 19.9 & 23.8 & {25.3} & 24.8 & 17.9 \\
		&  1.2 &  13.8 & 30.5 & 26.3 & {30.6} & 33.6 & 38.9 & {39.0} & 29.4 & 35.6 & {39.4} & 39.0 & 30.3 \\
		&  1.5 &  23.6 & {47.0} & 42.8 & 43.2 & 45.0 & {53.3} & 52.8 & 42.7 & 47.1 & {53.7} & 53.0 & 41.6 \\
		&  1.8 &  33.7 & {60.6} & 56.2 & 57.8 & 56.6 & 66 & {66.1} & 56.8 & 59.4 & {67.3} & 66.6 & 56.2 \\
		\hline
		$LN_{0,1}$ &  0 &  4.3 & {4.6} & 2.9 & --- & {6.4} & 6.0 & 5.8 & --- & 5.6 & {6.1} & 5.8 & --- \\
		&  0.3 &  6.2 & {7.4} & 3.8 & --- & 10.7 & {11.0} & 9.7 & --- & 8.9 & {10.4} & 9.3 & --- \\
		&  0.6 &  15.4 & {15.9} & 10.2 & --- & 22.6 & {23.4} & 20.2 & --- & {20.2} & 20.0 & 19.6 & --- \\
		&  0.9 &  24.9 & {27.8} & 18.8 & --- & 33.4 & {33.6} & 29.4 & --- & 32.2 & {33.4} & 31.0 & --- \\
		&  1.2 &  43.8 & {45.4} & 35.4 & --- & {49.4} & 48.4 & 44.1 & --- & 49.5 & {51.7} & 48.1 & --- \\
		&  1.5 &  55.2 & {58.3} & 47.4 & --- & 54.8 & {55.0} & 52.7 & --- & 60.8 & {63.4} & 60.2 & --- \\
		&  1.8 &  68.5 & {70.1} & 60.9 & --- & {64.2} & 63.5 & 63.3 & --- & 74.2 & {74.8} & 73.8 & --- \\
	\end{tabular}
\end{table}

%values are taken from the folder 2019.11.26_median_null_interaction + for wild quant_reg from 2020.03.02_med_inter_QUANTREG + for the rank QR from 2020.03.16_med_inter_QUANTREG
\begin{table}
	\centering
	\caption{Type-1 error rate in $\%$ (nominal level $\alpha = 5\%$) for
		testing the median null hypothesis of no interaction effect in the $2\times 2$ design for the rank-based (Rank) and wild bootstrap (Wild) quantile regression approach as well as all asymptotic and permutation tests using the interval-based (Int), kernel density (Kern) and bootstrap (Boot) approach for estimating the covariance matrix. Values inside the $95\%$ binomial interval $[4.4,5.6]$ are printed bold.}\label{tab:median_int_null}
	\begin{tabular}{lll|rrr|rrr|rr|l}
		\multicolumn{3}{c}{}&\multicolumn{3}{c}{Asymptotic} & \multicolumn{3}{c}{Permutation} & \multicolumn{2}{c}{Quantile reg.} & \\
		Distr & $\mathbf{n}$ & $\boldsymbol{\sigma}$   & Int & Kern & Boot & Int & Kern & Boot & Rank & Wild & Setting\\
		\hline		
		$N_{0,1}$ & $\mathbf{n_1}$ & $\boldsymbol{\sigma_1}$ & 2.5 & 4.0 & 3 & \textbf{4.7} & \textbf{5.0} & \textbf{4.8} & \textbf{5.0} & 6.5 & balanced homosc.  \\
		&  & $\boldsymbol{\sigma_2}$ &  2.5 & 3.9 & 3.2 & \textbf{5.3} & \textbf{5.4} & \textbf{5.2} & \textbf{5.2} & 7.6
		& balanced heterosc. \\
		& $\mathbf{n_2}$ & $\boldsymbol{\sigma_1}$ &  1.6 & \textbf{5.0} & 3.3 & \textbf{5.3} & \textbf{5.4} & \textbf{5.4} & 9.5 & 4.1 & unbalanced homosc. \\
		&  & $\boldsymbol{\sigma_2}$ &  2 & 4.3 & 3.0 & 6.4 & \textbf{5.4} & \textbf{5.4} & 9.2 & 3.8 & positive pairing \\
		&  & $\boldsymbol{\sigma_3}$ &  1.7 & \textbf{4.6} & 3.1 & \textbf{5.4} & 5.8 & \textbf{5.5} & 10.0 & \textbf{4.6} & negative pairing\\		
		\hline
		$t_3$ & $\mathbf{n_1}$ & $\boldsymbol{\sigma_1}$ & 1.6 & 2.6 & 2.2 & \textbf{5.3} & \textbf{5.0} & \textbf{4.9} & \textbf{4.7} & 6.3 & balanced homosc.  \\
		&  & $\boldsymbol{\sigma_2}$ & 2.1 & 3.1 & 3.0 & 5.9 & 5.9 & 5.7 & \textbf{5.6} & 6.1 & balanced heterosc. \\
		& $\mathbf{n_2}$ & $\boldsymbol{\sigma_1}$ & 0.7 & 2.7 & 1.9 & \textbf{5.0} & \textbf{4.7} & \textbf{4.7}  & 9.5 & 2.9 & unbalanced homosc. \\
		&  & $\boldsymbol{\sigma_2}$ & 1.0 & 2.8 & 2.2 & 6.4 & \textbf{5.3} & \textbf{5.6} & 10.1 & 2.9 & positive pairing \\
		&  & $\boldsymbol{\sigma_3}$ & 0.8 & 3.6 & 2.3 & \textbf{4.9} & 5.8 & \textbf{5.5} & 8.5 & 3.9 & negative pairing \\	
		\hline 		  	
		$LN_{0,1}$ & $\mathbf{n_1}$ & $\boldsymbol{\sigma_1}$ & \textbf{4.5} & 3.4 & 1.7 & \textbf{5.2} & \textbf{5.1} & \textbf{5.0} & \textbf{5.0} & 7.1 & balanced homosc.  \\
		&  & $\boldsymbol{\sigma_2}$ & \textbf{4.7} & 3.8 & 2.1 & 5.8 & 5.8 & \textbf{5.3} & \textbf{5.0} & 7.5 & balanced heterosc. \\
		& $\mathbf{n_2}$  & $\boldsymbol{\sigma_1}$ & 3.3 & 3.1 & 1.8 & \textbf{5.4} & \textbf{5.1} & \textbf{5.3} & 7.0 & \textbf{4.5}  & unbalanced homosc. \\
		&  & $\boldsymbol{\sigma_2}$ & 3.1 & 3.0 & 1.7 & 6.1 & \textbf{5.5} & \textbf{5.4} & 6.9 & 4.3 & positive pairing \\
		&  & $\boldsymbol{\sigma_3}$ &  2.8 & 2.9 & 1.6 & \textbf{5.6} & \textbf{5.4} & \textbf{5.3} & 6.1 & \textbf{4.6} & negative pairing \\  	
		\hline
		$\chi^2_3$ & $\mathbf{n_1}$ & $\boldsymbol{\sigma_1}$ & \textbf{5.0} & \textbf{4.9} & 2.8 & \textbf{5.5} & \textbf{5.4} & \textbf{5.4} & \textbf{5.3} & 6.6 & balanced homosc.  \\
		&  & $\boldsymbol{\sigma_2}$ & \textbf{5.0} & \textbf{4.8} & 3.0 & 5.9 & 5.9 & \textbf{5.6} & \textbf{5.6} & 7.7 & balanced heterosc. \\
		& $\mathbf{n_2}$ & $\boldsymbol{\sigma_1}$ & 3.3 & 4.0 & 2.7 & \textbf{4.8} & \textbf{4.7} & \textbf{4.7} & 8.6 & 4.1 & unbalanced homosc. \\
		&  & $\boldsymbol{\sigma_2}$ &  4.2 & 4.3 & 3.0 & 6.3 & 5.9 & 5.8 & 8.2 & 4.0 & positive pairing \\
		&  & $\boldsymbol{\sigma_3}$ & 3.4 & \textbf{4.4} & 3.0 & 6.2 & 6.1 & \textbf{5.6} & 8.2 & \textbf{4.5} & negative pairing \\
		\hline
	\end{tabular}
\end{table}

The power plots of the four permutation procedures and the wild bootstrap quantile regression method in case of the $2\times 2$-median interaction testing problem are presented in Figure~\ref{fig:median_int_power}. As mentioned in the paper the conclusions are similar to the one drawn for the respective test versions for main median effects with two exceptions: 1. the type-1 errors of the rank inversion quantile regression test changes from being rather conservative to being quite liberal in the unbalanced cases. 2. the wild bootstrap quantile regression test can now compete with our permutation procedures and is even slightly favorable for larger shirt alternatives $\delta>1$.

\begin{figure}
	%graphics were generated in the folder 2020.03.02_med_inter_power_QUANTREG
	\centering
	\includegraphics[ width =0.45\textwidth]{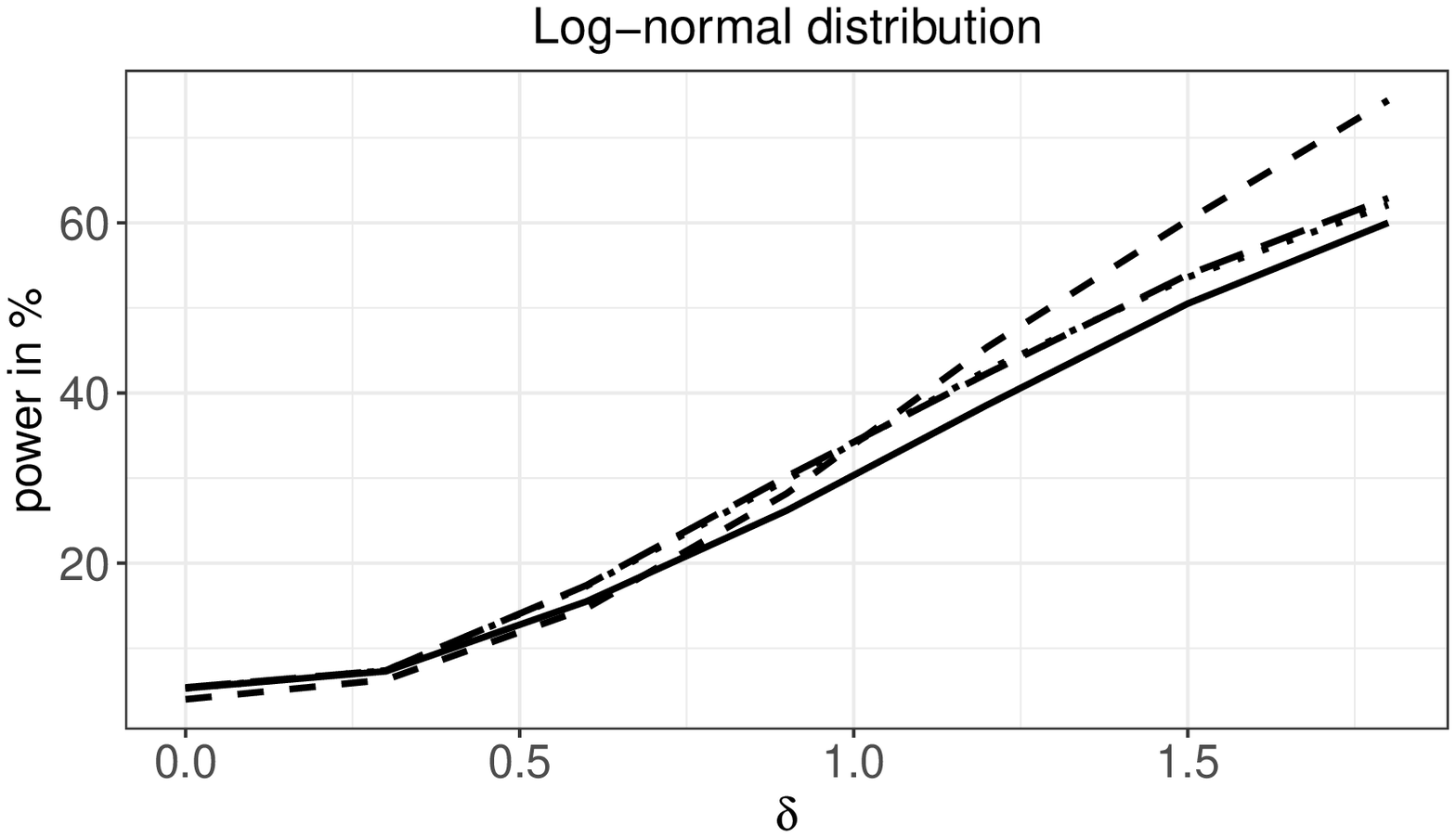}\qquad
	\includegraphics[ width = 0.45\textwidth]{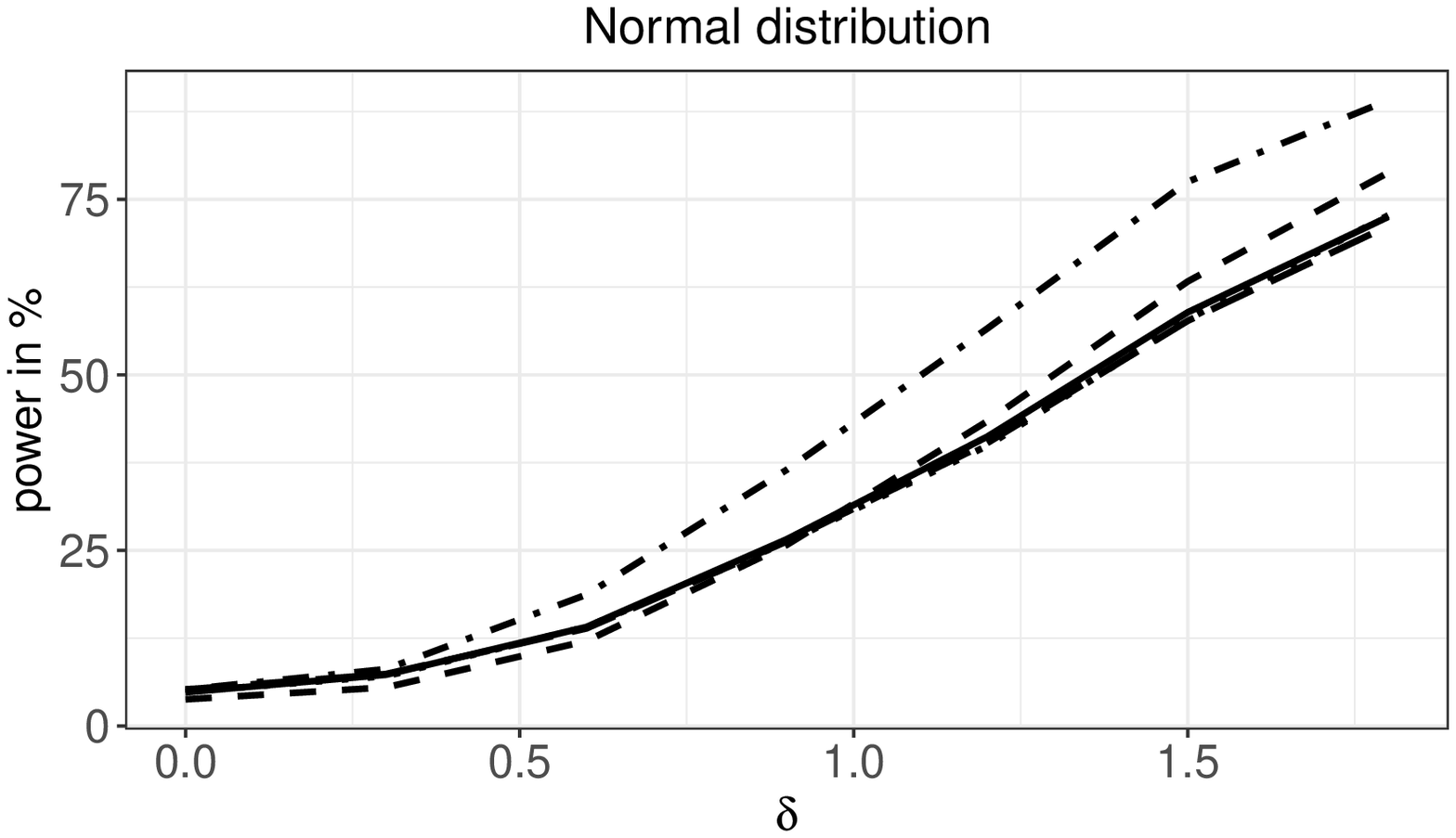}\\
	\includegraphics[ width =0.45\textwidth]{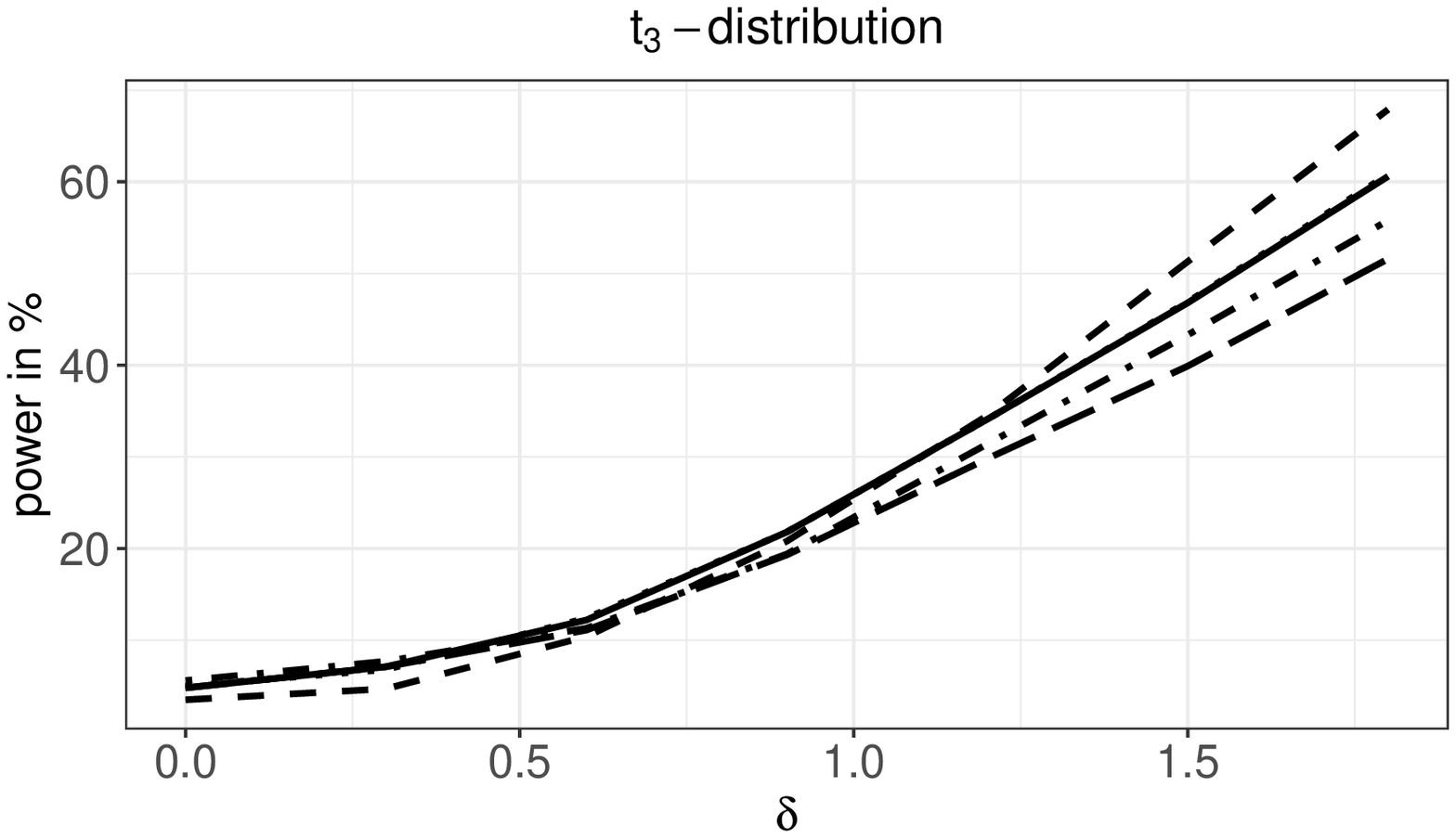}\qquad
	\includegraphics[ width = 0.45\textwidth]{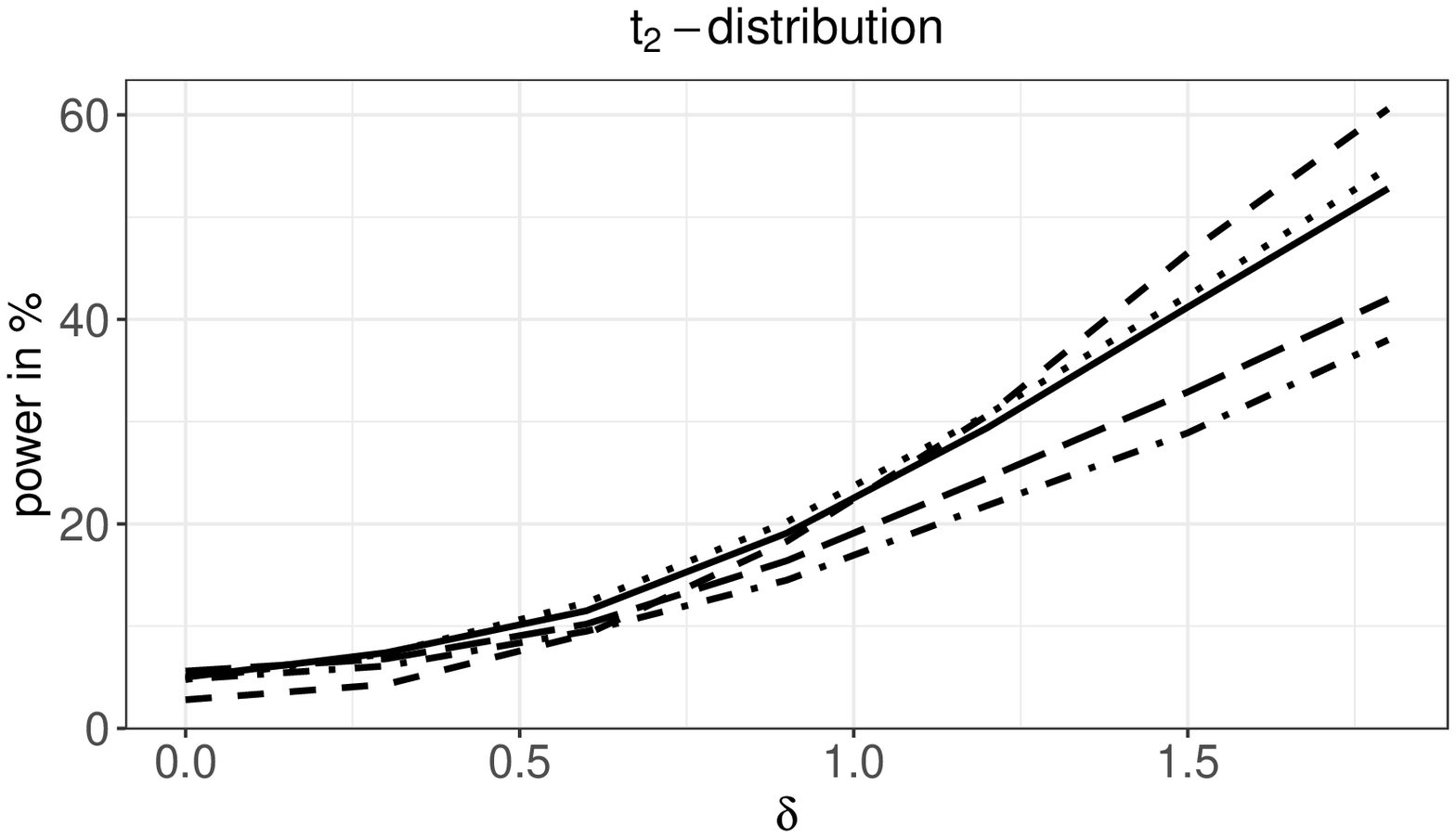}
	\caption{Power curves for the $2\times 2$-median interaction testing problem of the permutation PBK test (dash-dotted), the wild bootstrap quantile regression test (dashed) as well as the three current permutation tests based on interval-based (long-dashed), kernel density (dotted) and bootstrap (solid) covariance matrix estimation, resp., for $\mathbf{n}=\mathbf{n_2}$, $\boldsymbol{\sigma}=\boldsymbol{\sigma_1}$ and shift alternatives $\boldsymbol{\mu}=(0,0,0,\delta)$ }\label{fig:median_int_power}
\end{figure}

To illustrate the effect of increasing sample sizes on the type-1 error rate of the asymptotic tests, we conducted some additional simulations for the 4-sample IQR testing problem under equal sample size scenarios $n=(n_1,n_1,n_1,n_1)$ for growing $n_1$. The results for normal and log-normal distributions are presented in Figure~\ref{fig:IQR_incr_n}. It can be seen that the type-1 error rates get closer and closer to the $5\%$ benchmark line, where in comparison to the others this process is quite slow for the bootstrap approach under normal and for the kernel density method under log-normal distributions. In all, these plots strengthen our preference from Section~\ref{sec:sim_typ1} for the permutation approaches in case of small to moderate sample sizes.

\begin{figure}
	%graphics were generated in the folder 2019.10.08_increasing_n
	\centering
	\includegraphics[ width =0.45\textwidth]{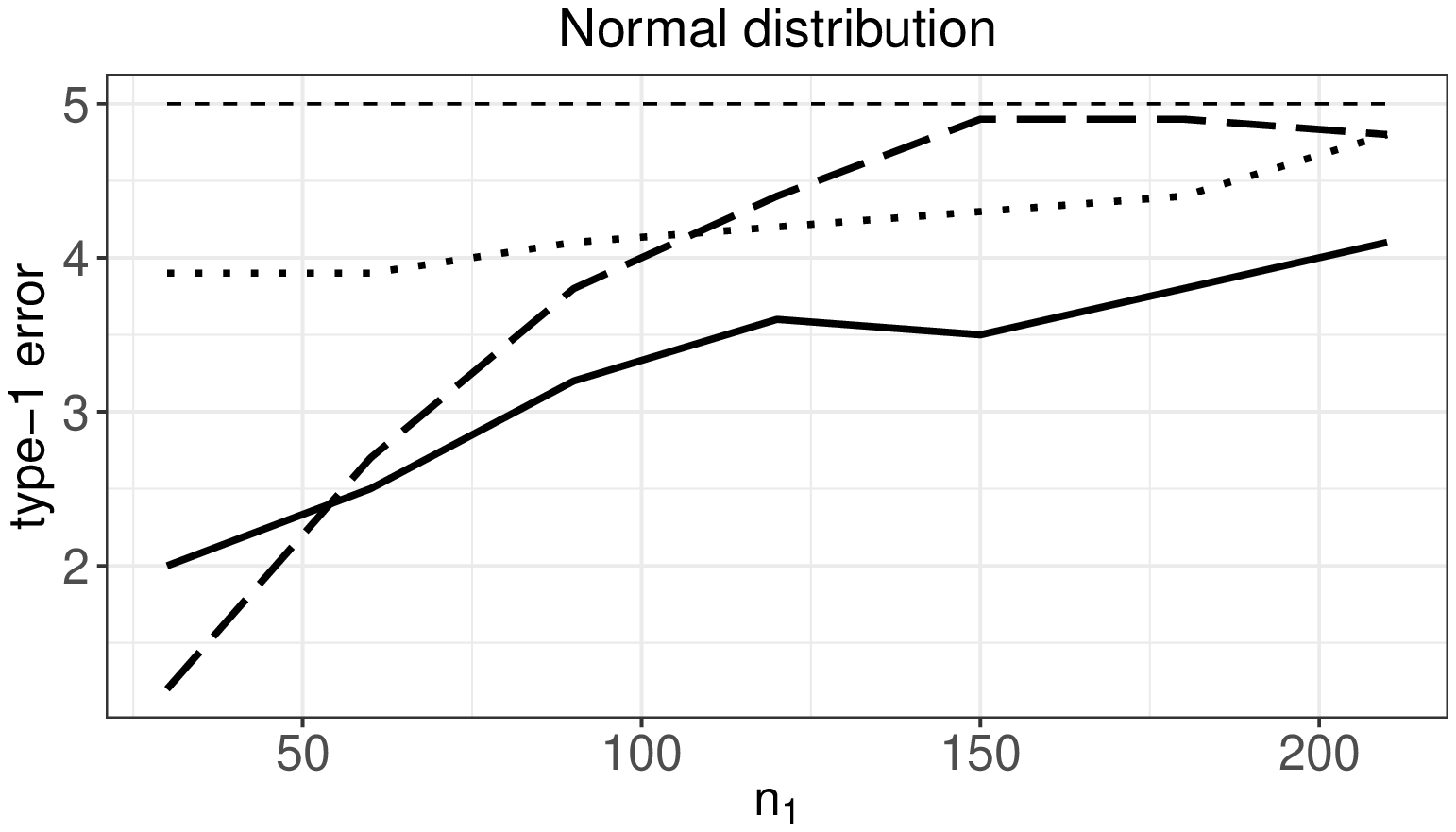}\qquad
	\includegraphics[ width = 0.45\textwidth]{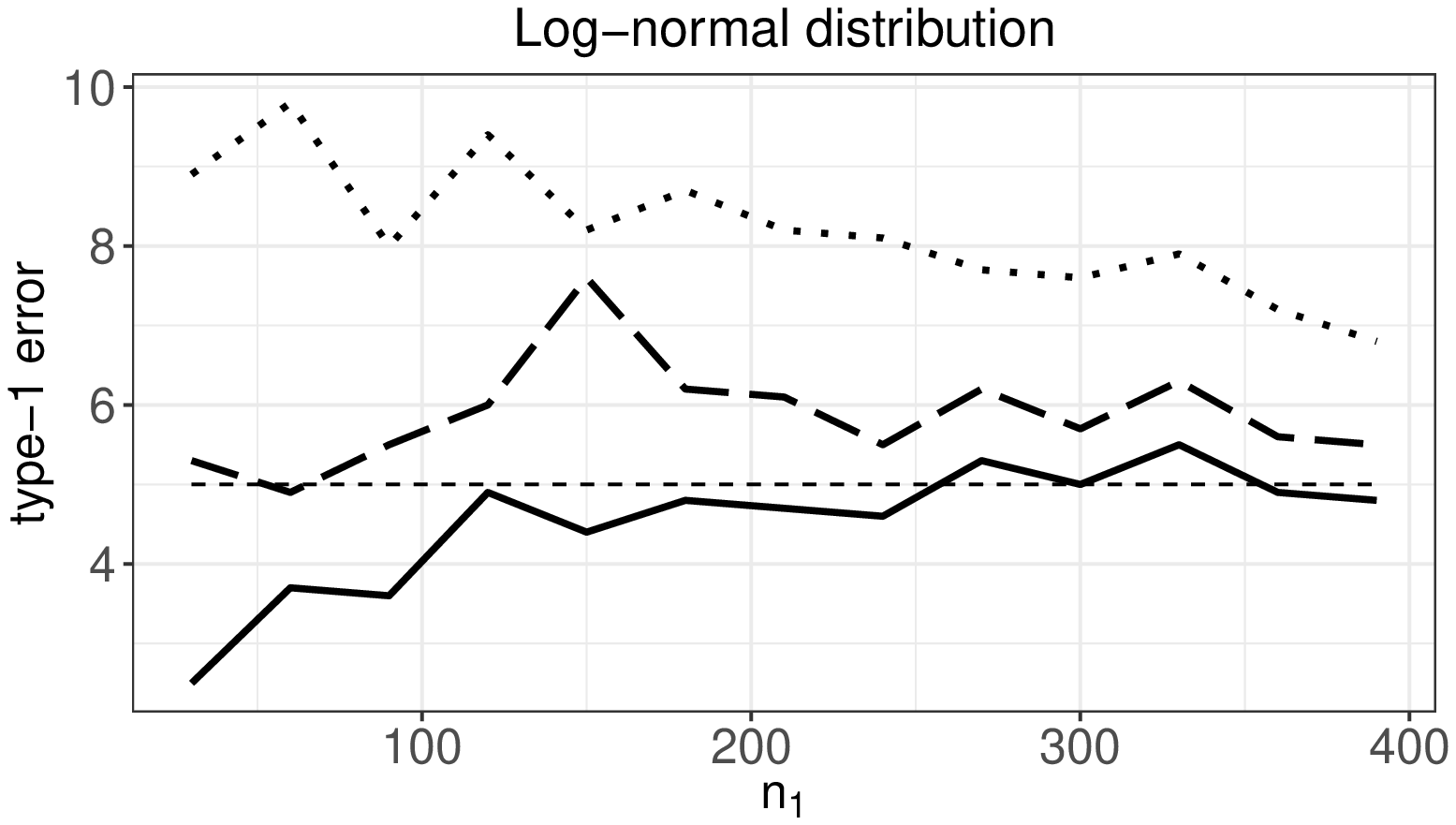}\\
	\caption{Type-1 error for the 4-sample IQR testing problem of our asymptotic tests based on interval-based (dashed), kernel density (dotted) and bootstrap (solid) covariance matrix estimation, resp., under normal (left) and log-normal distribution (right) for increasing balanced sample sizes  $\mathbf{n}=(n_1,\ldots,n_1)$   }\label{fig:IQR_incr_n}
\end{figure}

For a deeper understanding of the different test decision in our data analysis, we run additional simulations for the $2\times 2$ three-quantile test under the sample size situation $\mathbf{n}=(8,13,12,11)$ of Example 2. We compared the type-1 error rate of different distributions under shift alternatives $\boldsymbol{\mu}=(0,0,0,\delta)$. The resulting power curves are plotted in Figure \ref{fig:3quan_power} and the respective type-1 error rates ($\delta=0$) are separately displayed in Table~\ref{tab:3quan_null}. The asymptotic tests lead to quite conservative type-1 error rates reaching down to $0.6\%$, while the permutation tests keep the nominal level accurately with type-1 error rates between $4.2$--$5.3\%$. For the two skewed distributions, log-normal and $\chi_3^2$-distribution, the permutation kernel density approach exhibit a substantially lower power than the other two permutation approaches. In case of normal distributions, the power curves are very close to each other with a small benefit of the bootstrap method. Under the $t_3$-distribution it turns out that the interval-based approach leads to smaller power values compared to the other two tests.
\begin{table}
	%Results are taken from the folders  2019.10.10_3quantiles_power (for the small sample size)
	\centering
	\caption{Type-1 error rate in $\%$ (nominal level $\alpha = 5\%$) for the three-quantile testing problem of our asymptotic and permutation tests using the interval-based (Int), kernel density (Ker) and bootstrap (Boo) approach for estimating the covariance matrix under the sample size setting $\mathbf{n}=(8,13,12,11)$}\label{tab:3quan_null}
	
	\begin{tabular}{l|rrr|rrr|}
		
		\multicolumn{1}{c}{}&\multicolumn{3}{c}{Asymptotic} & \multicolumn{3}{c}{Permutation}  \\
		Distr  & Int & Ker & Boo & Int & Ker & Boo \\
		\hline		
		$N_{0,1}$ & 1.5   &   3.9  &    1.7 &   5.1   &   5.2  &  5.1\\
		$LN_{0,1}$  & 2.0   &   2.6  &    0.6   & 5.0   &   4.3   &   4.3 \\
		$t_3$ & 0.9    &  3.2   &   1.2  &  5.0  & 5.3 & 5.3\\
		$\chi_3^2$ & 2.5  & 3.2 & 1.4 & 4.8 & 4.2 & 4.8
	\end{tabular}
\end{table}

\begin{figure}
	%graphics were generated in the folders 2019.10.10_3quantiles_power_chi_square and 2019.10.10_3quantiles_power
	\centering
	\includegraphics[ width =0.45\textwidth]{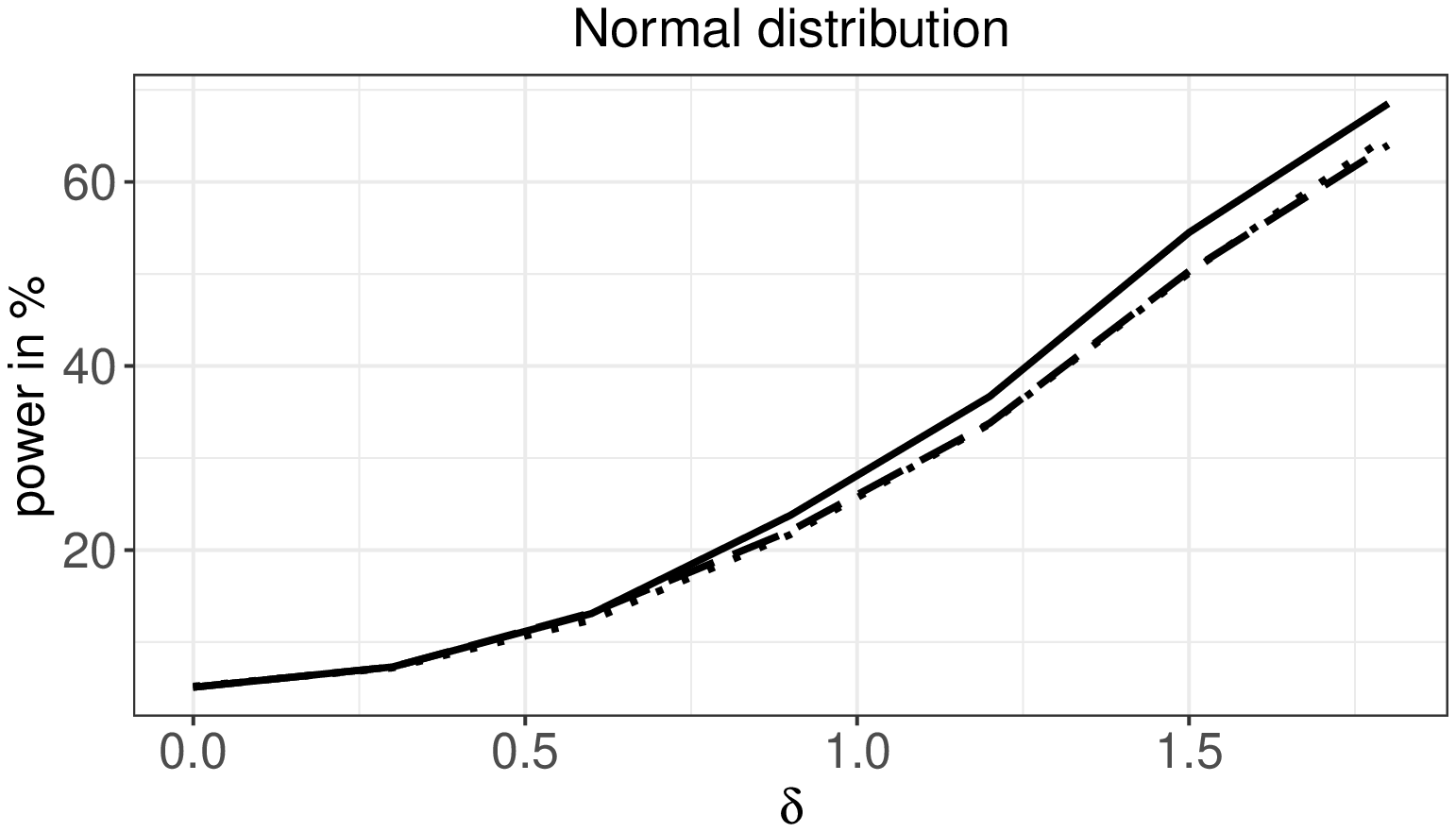}
	\qquad
	\includegraphics[ width =0.45\textwidth]{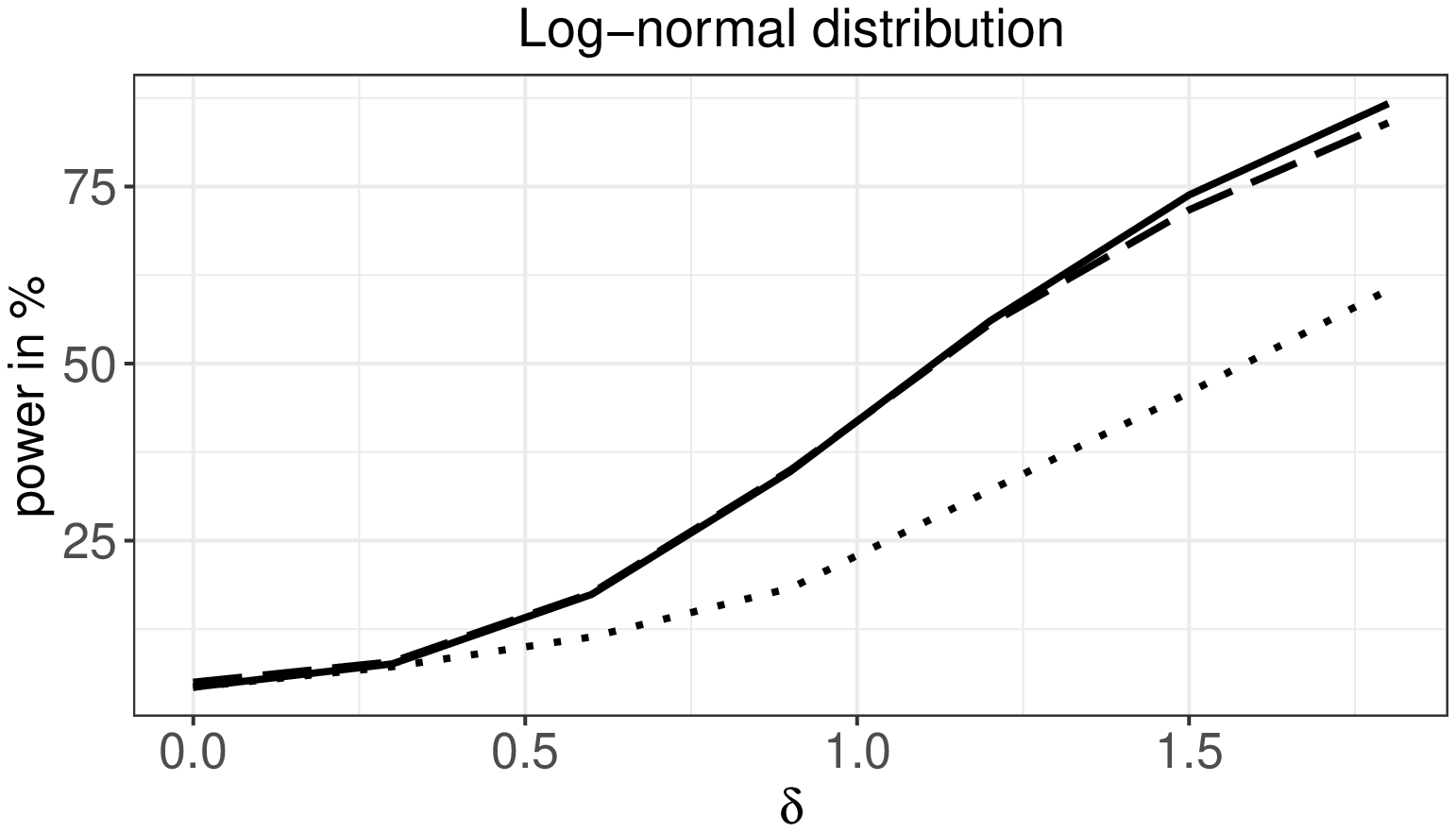}\\
	\includegraphics[ width = 0.45\textwidth]{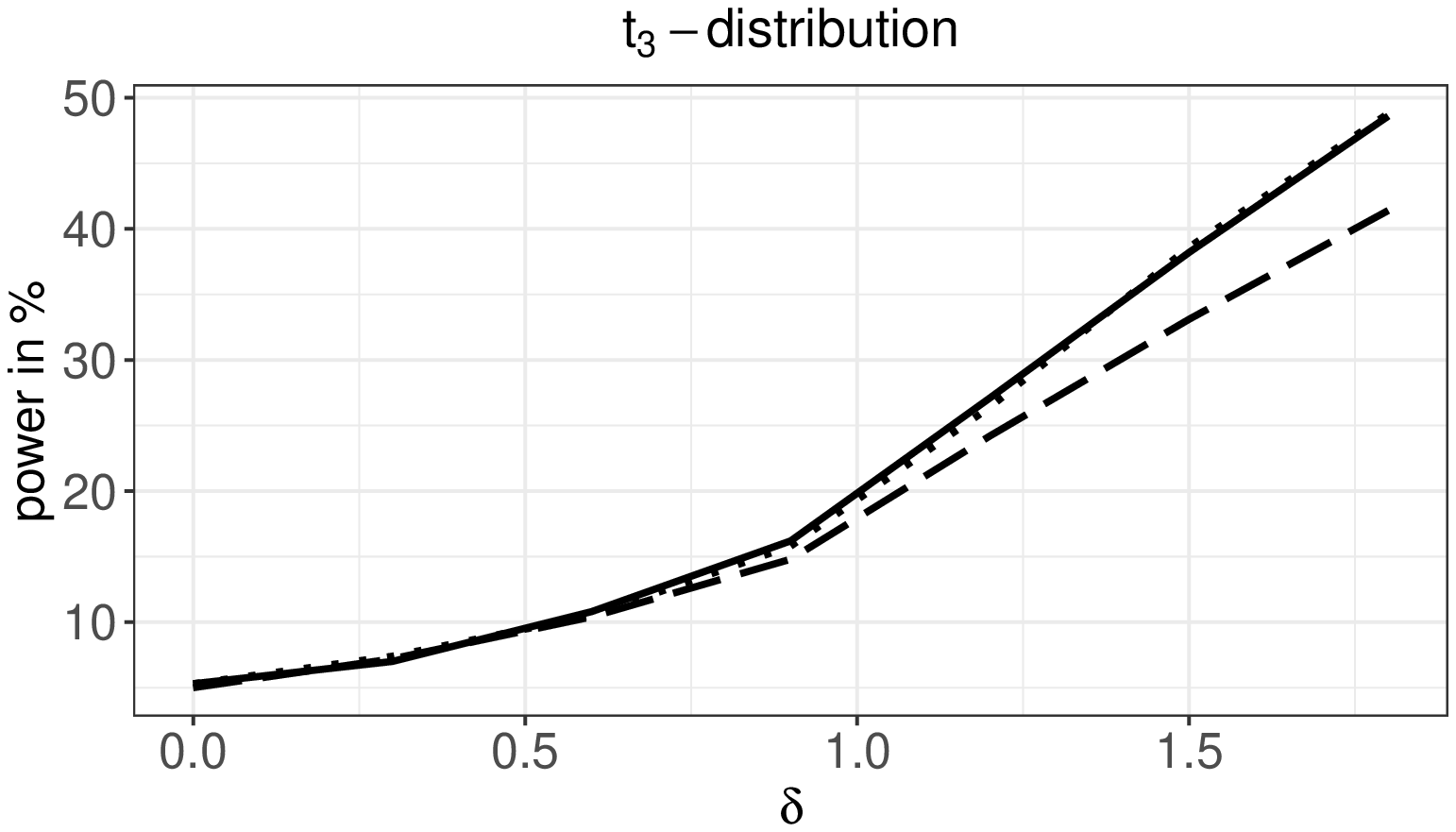} \qquad
	\includegraphics[ width = 0.45\textwidth]{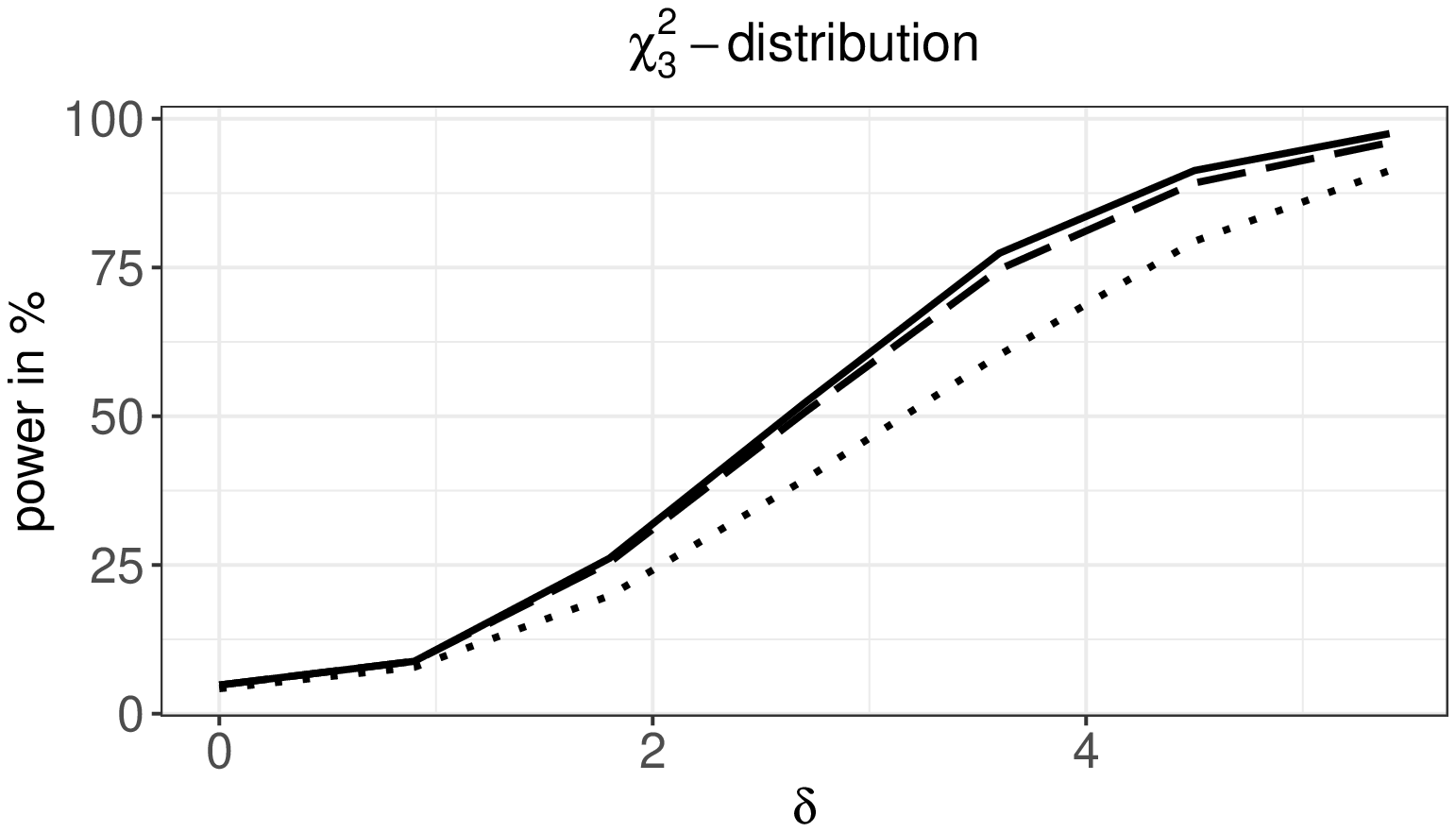}
	\caption{Power values for the three-quantile testing problem of our permutation tests based on interval-based (dashed), kernel density (dotted) and bootstrap (solid) covariance matrix estimation, resp., for $\mathbf{n}=(8,13,12,11)$, $\boldsymbol{\sigma}=\boldsymbol{\sigma_1}$ and shift alternatives $\boldsymbol{\mu}=(0,0,0,\delta)$ }\label{fig:3quan_power}
\end{figure}

\newpage
\section{Proofs}
\subsection{Proof of Proposition~{\ref{prop:uncond}}}
Although the statement was already proven by \cite{serfling2009}, we prefer to present the proof nevertheless for didactic reasons to prepare the proof for the permutation approach and the proof for the local alternatives. \\
Let $\mathbb{D}$ be the set consisting of all non-decreasing and right continuous functions $G:\R\to \R$. Clearly, $\mathbb{D}$ is a subset of the Skorohod space $D(\R)$ on $\R$, where we equip the latter with the sup-norm, as \cite{vaartWellner1996} did. For every $p\in (0,1)$ we define the corresponding inverse mapping $\Phi_p:\mathbb{D}\to\R$ \citep[compare to Section 3.9.4.2]{vaartWellner1996} by
\begin{align}\label{eqn:def_Phip}
\Phi_p(G) = G^{-1}(p) = \inf\{ t\in\R : G(t)\geq p\}.
\end{align}
If $G$ is differentiable at $q=G^{-1}(p)$ with positive derivative $g$ then $\Phi_p$ is Hadamard differentiable \cite[Lemma 3.9.20]{vaartWellner1996} at $G$ tangentially to the space $\mathbb{D}_q\subset D(\R)$, which consists of all bounded functions $\alpha\in D(\R)$ being continuous at $q$. The Hadamard derivative is given, in that case, by
\begin{align*}
\Phi'_{p,G}(\alpha) = -\frac{ \alpha(q)}{g(q)}.
\end{align*}
It is well known, see \cite{vaartWellner1996} or \cite{shorackWellner2009}, that
\begin{align}\label{eqn:process_conv_F_uncon}
n_i^{1/2}(\widehat F_{i} - F_i) \overset{\mathrm d}{\longrightarrow} B\circ F_i \text{ on }D(\R),
\end{align}
where $D(\R)$ denotes the Skorohod space on $\R$ equipped with the sup-norm and $B$ is a Brownian bridge on $[0,1]$. Applying the functional $\delta$-method \citep[Theorem 3.9.4]{vaartWellner1996} with the map $\Phi:\mathbb{D}\to \R^m$ given by $\Phi(G)=(\Phi_{p_1}(G),\ldots,\Phi_{p_m}(G))$ yields
\begin{align*}
n_i^{1/2}( \widehat F_i^{-1}(p_{r}) -  F_i^{-1}(p_{r}) )_{r = 1,\ldots,m} &\overset{\mathrm d}{\longrightarrow} \Bigl( -\frac{B(F_i(q_{ir}))}{f_i(q_{ir})} \Bigr)_{r = 1,\ldots,m} \\
&= \Bigl( -\frac{B(p_r)}{f_i(q_{ir})} \Bigr)_{r = 1,\ldots,m} \overset{\mathrm d}{=} \kappa_i^{1/2}\mathbf{Z}_i.
\end{align*}
It remains to prove that $\mathbf{\Sigma}^{(i)}$ is nonsingular. We define $\mathbf{A}_1=\kappa_i^{-1/2}\text{diag}(f_i(q_{i1})^{-1},\ldots,f_i(q_{im})^{-1})$ and $\mathbf{A}_2=\text{diag}(1-p_1,\ldots,1-p_m)$. Recall the well-known time transformation of a Brownian motion $W$ to obtain a Brownian bridge, i.e., we have $B(t) \overset{\mathrm d}{=} (1-t)W(t/(1-t))$. Consequently, $\mathbf{Z}_i$ has the same distribution as $\mathbf{A}_1\mathbf{A}_2 (W(t_1),\ldots,W(t_m))'$ with $t_i=p_i/(1-p_i)$, where $0<t_1<\ldots < t_m$. The covariance matrix $\mathbf{\Sigma}_W$ of $(W(t_1),\ldots,W(t_m))'$ is given by $\mathbf{\Sigma}_W=( \min(t_a,t_b))_{1\leq a,b\leq m}$ and its determinat equals
\begin{align*}
\text{det}(\mathbf{\Sigma}_W)=t_1(t_2-t_1)(t_3-t_2+t_1)\ldots(t_m-t_{m-1}+\ldots + (-1)^{m-1} t_1)>0.
\end{align*}
Consequently,  $\mathbf{\Sigma}_W$ and, thus,  $\mathbf{\Sigma}^{(i)}$ are nonsingular.

\subsection{Proof of Theorem \ref{theo:teststat_consist}}	
As explained before, Proposition \ref{prop:uncond} is also valid under alternatives and, consequently, it remains to prove that
$\mathbf{T}\mathbf{q}\neq \mathbf{0}_{km}$ always implies $( \mathbf{T} \mathbf{ q})' ( \mathbf{T} \mathbf{  \Sigma} \mathbf{T}' )^+ \mathbf{T} \mathbf{  q}>0$. Let $\mathbf{T}\mathbf{q}\neq \mathbf{0}_{km}$. Since  $\mathbf{  \Sigma}$ is nonsingular by Proposition \ref{prop:uncond}, the root $\mathbf{\Sigma}^{1/2}$ exists and is nonsingular as well. Moreover, there is some $\mathbf{\widetilde q}$ such that  $\mathbf{q}=\mathbf{\Sigma}^{1/2}\mathbf{\widetilde q}$. Recall the following well known properties of Moore--Penrose inverses: $(\mathbf{A}')^+ = (\mathbf{A}^+)'$, $(\mathbf{A}'\mathbf{A})^+=\mathbf{A}^+(\mathbf{A}')^+$ and $\mathbf{A}\mathbf{A}^+\mathbf{A} = \mathbf{A}$. For a detailed discussion of the Moore--Penrose inverse and more general inverses we refer to \cite{rao:mitra:1971}. By the properties mentioned before
\begin{align*}
&\mathbf{T}\mathbf{ \Sigma}^{1/2} \Bigl[ (\mathbf{T}\mathbf{ \Sigma}^{1/2})^+\mathbf{T}\mathbf{q} \Bigr] = \mathbf{T}\mathbf{ \Sigma}^{1/2}\mathbf{\widetilde q} = \mathbf{T}\mathbf{q} \neq \mathbf{0}_{km} \quad \\
\text{and, thus,}\quad &( \mathbf{T} \mathbf{ q})' ( \mathbf{T} \mathbf{  \Sigma} \mathbf{T}' )^+ \mathbf{T} \mathbf{  q} = ( \mathbf{T} \mathbf{ q})' ( \mathbf{  \Sigma}^{1/2} \mathbf{T}' )^+ ( \mathbf{T} \mathbf{  \Sigma}^{1/2} )^+ \mathbf{T} \mathbf{  q} \\
&\phantom{( \mathbf{T} \mathbf{ q})' ( \mathbf{T} \mathbf{  \Sigma} \mathbf{T}' )^+ \mathbf{T} \mathbf{  q}}= \Bigl[ (\mathbf{T}\mathbf{ \Sigma}^{1/2})^+\mathbf{T}\mathbf{q} \Bigr]'\Bigl[ (\mathbf{T}\mathbf{ \Sigma}^{1/2})^+\mathbf{T}\mathbf{q} \Bigr] >0.
\end{align*}

\subsection{Proof of Theorem \ref{theo:stat_perm}}	
The proof follows the same strategy as the one for Theorem \ref{theo:teststat_uncon}, compare to Section \ref{sec:asy_results}. First, we prove a multivariate central limit theorem for the permutation quantiles $\mathbf{\widehat q}^\pi$.
\begin{lemma}\label{lem:perm}
	Let $i\in\{1,\ldots,k\}$ and suppose that $n_i/n\to\kappa_i\in(0,1)$. Then under Assumption \ref{ass:densities_perm} we have
	\begin{align*}
	\sqrt{n}\Bigl( \widehat q_{ir}^{\:\pi} -  \widehat q_{ir} \Bigr)_{i=1,\ldots,k;r=1,\ldots,m} \overset{\mathrm d}{\longrightarrow} \mathbf{Z}^\pi,
	\end{align*}
	where $(C_{ir})_{i=1,\ldots,k;r=1,\ldots,m}$ abbreviates $(C_{11},C_{12},\ldots,C_{1m},C_{21},\ldots,C_{km})$ and $\mathbf{Z}^\pi$ is a zero-mean, multivariate normal distributed random variable with covariance matrix $\mathbf{\Sigma}^\pi$ given by its entries
	\begin{align*}
	&\mathbf{\Sigma}^\pi_{(ac), (bd) } = \E( \mathbf{Z}_{ac}^\pi \mathbf{Z}_{bd}^\pi) = \gamma^\pi(c,d)\frac{ p_a\wedge p_b - p_ap_b}{f(q_{a})f(q_{b})},\\
	&\text{where}\quad \gamma^\pi(c,d) =  \frac{1}{\kappa_c} \mathbf{1}\{ c = d\} - 1 \qquad(a,b\in\{1,\ldots,m\}; c,d\in\{1,\ldots,k\}).
	\end{align*}
\end{lemma}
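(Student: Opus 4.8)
The plan is to follow the blueprint of the proof of Proposition~\ref{prop:uncond}, replacing the ordinary empirical process by the \emph{permutation empirical process} and the functional delta method by its resampling counterpart. Since a permuted group is a without-replacement subsample of the pooled data, every $\widehat q_{ir}^{\:\pi}=\Phi_{p_r}(\widehat F_i^\pi)$ (with $\widehat F_i^\pi$ the empirical distribution function of the $i$-th permuted group) is steered by the pooled empirical quantile $\widehat F^{-1}(p_r)=\Phi_{p_r}(\widehat F)$, the natural common target that the permutation distribution assigns to all groups; I would therefore analyse $\sqrt n\,(\Phi_{p_r}(\widehat F_i^\pi)-\Phi_{p_r}(\widehat F))$, using $\Phi_p$ from \eqref{eqn:def_Phip}. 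The argument then splits into a process-level limit theorem and its transfer through the joint inverse map $\Phi=(\Phi_{p_1},\ldots,\Phi_{p_m})$.

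First I would establish a joint conditional functional CLT for the permutation empirical processes $(\sqrt n\,(\widehat F_i^\pi-\widehat F))_{i=1,\ldots,k}$, namely conditional weak convergence given $\mathbf X$, in probability, to a centered Gaussian vector $(\mathbb G_1,\ldots,\mathbb G_k)$ of processes on the pooled Skorohod space with covariance $\mathrm{Cov}(\mathbb G_c(s),\mathbb G_d(t))=\gamma^\pi(c,d)\,(F(s\wedge t)-F(s)F(t))$. The key device is to write $\widehat F_i^\pi(t)-\widehat F(t)=\tfrac1n\sum_{l=1}^n a_{li}\,\mathbf 1\{Z_l\le t\}$ in terms of the pooled sample $Z_1,\ldots,Z_n$ and the centered exchangeable without-replacement weights $a_{li}=\tfrac{n}{n_i}\mathbf 1\{\text{label}_l=i\}-1$, so that a conditional multiplier/exchangeable-weight CLT (the Pr\ae{}stgaard--Wellner type result extended from \citep{vaartWellner1996}) applies jointly across the $k$ linearly dependent weight vectors. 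The intra-group scaling $\tfrac n{n_i}\to\kappa_c^{-1}$ yields the diagonal factor $\kappa_c^{-1}$, while the cross-group factor $-1$ reflects the sampling-without-replacement constraint $\sum_i\kappa_i\mathbb G_i\equiv 0$; this is consistent since $\sum_{c,d}\kappa_c\kappa_d\gamma^\pi(c,d)=\sum_c\kappa_c-(\sum_c\kappa_c)^2=0$.

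Next I would push this through $\Phi$. Under Assumption~\ref{ass:densities_perm}, $F$ is differentiable at each $q_r=F^{-1}(p_r)$ with positive continuous density $f$, so each coordinate $\Phi_{p_r}$ is Hadamard differentiable at $F$ tangentially to functions continuous at $q_r$, with derivative $\alpha\mapsto-\alpha(q_r)/f(q_r)$; the vector map $\Phi$ inherits joint Hadamard differentiability on the intersection of these tangential spaces. Because the centering $\widehat F$ is itself random and converges to $F$, the ordinary delta method does not suffice: I would invoke the functional delta method for resampling (the conditional/bootstrap version in \citep{vaartWellner1996}), which requires \emph{uniform} Hadamard differentiability of $\Phi$ along $\widehat F\to F$. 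This yields $\sqrt n\,(\widehat q_{ir}^{\:\pi}-\widehat F^{-1}(p_r))_{i,r}$ converging conditionally in probability to $\mathbf Z^\pi_{(r)i}=-\mathbb G_i(q_r)/f(q_r)$. The stated covariance follows by the linear read-off $\mathrm{Cov}(\mathbf Z^\pi_{(a)c},\mathbf Z^\pi_{(b)d})=\tfrac{1}{f(q_a)f(q_b)}\gamma^\pi(c,d)(F(q_a\wedge q_b)-F(q_a)F(q_b))$ together with $F(q_a)=p_a$ and $F(q_a\wedge q_b)=p_a\wedge p_b$, which reproduces $\mathbf\Sigma^\pi$ exactly.

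The main obstacle is the resampling delta-method step: establishing uniform Hadamard differentiability of the joint quantile map and verifying the tangential-space and random-centering hypotheses needed to turn the conditional process CLT into conditional convergence of the quantiles. This is precisely the point where the standard results of \citep{vaartWellner1996} must be extended, and it is also where the joint, negatively cross-correlated structure of the $k$ exchangeable-weight empirical processes has to be controlled simultaneously rather than group by group. The covariance bookkeeping and the reductions $F(q_a)=p_a$ are then routine.
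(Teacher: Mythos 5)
Your proposal is correct and follows essentially the same route as the paper: you center the permuted group quantiles at the pooled empirical quantile, establish a joint conditional CLT for the $k$ permutation empirical processes with exactly the covariance $\gamma^\pi(c,d)\bigl(F(s\wedge t)-F(s)F(t)\bigr)$ (the paper's Lemma~\ref{lem:emp_proc_F}, proved via a Cram\'er--Wold argument and Hoeffding's combinatorial CLT for the exchangeable without-replacement weights), and then push it through the quantile map by the resampling delta method, which requires the uniform Hadamard differentiability established in the paper's Lemma~\ref{lem:hada_diff}. The two obstacles you flag --- the joint $k$-sample permutation process CLT and the uniform differentiability of $\Phi$ (which the paper verifies along $\widehat F\to F$ using Bahadur's oscillation lemma, a strong-approximation construction, and the rate condition \eqref{eqn:ni_n_perm}) --- are precisely the two auxiliary lemmas the paper develops to complete this argument.
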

The proof is given in Section \ref{sec:proof:lem:perm}.  Since the groups are, clearly, not independent within the permutation step, the limiting covariance matrix has not a block structure as $\mathbf{\Sigma}=\oplus_{i=1}^k\mathbf{\Sigma}^{(i)}$. But, due to $\mathbf{T}\mathbf{1} = \mathbf{0}_{km}$, we have
\begin{align*}
&\mathbf{T}\mathbf{\Sigma}^\pi \mathbf{T}'=\mathbf{T}\mathbf{\widetilde\Sigma}^\pi \mathbf{T}'\quad \text{ for }\mathbf{\widetilde\Sigma}^\pi = \bigoplus_{i=1}^k \mathbf{\widetilde\Sigma}^{(i),\pi},\\
&\text{where }
\mathbf{\widetilde\Sigma}^{(i),\pi}_{ab} = \kappa_i^{-1}\frac{1}{f(q_{a})f(q_{b})}( p_a\wedge p_b - p_ap_b )\qquad (a,b\in\{1,\ldots,m\}).
\end{align*}
Analogously as in the proof for Proposition \ref{prop:uncond}, we can deduce that $\mathbf{\widetilde\Sigma}^{(i),\pi}$ and, thus, $\mathbf{\widetilde\Sigma}^{\pi}$ are nonsingular matrices. Instead of estimating $\mathbf{\Sigma}^\pi$, we use the permutation counterpart $\mathbf{\widehat \Sigma}^\pi$ of $\mathbf{\widehat \Sigma}$, which estimates  $\mathbf{\widetilde\Sigma}^\pi$ consistently.
\begin{lemma}\label{lem:est_f_perm_consis}
	Let $\widehat\sigma_i(p_r)$ be the estimator defined in \eqref{eqn:def_hat_sigma_PB} and $\widehat\sigma_i^\pi(p_r)$ its permutation counterpart. Then under Assumption \ref{ass:densities_perm} we have the following conditional convergences given the data in probability:
	\begin{align}\label{eqn:est_f_perm}
	\widehat f_{ir}^\pi = \frac{\sqrt{p_r(1-p_r)}}{ \widehat\sigma_i^\pi(p_r)}\overset{p}{\rightarrow} f(q_r)\text{ and, thus, }\mathbf{\widehat \Sigma}^\pi\overset{p}{\rightarrow}\mathbf{\widetilde\Sigma}^\pi.
	\end{align}
\end{lemma}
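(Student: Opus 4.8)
The plan is to reduce both assertions to the conditional weak convergence of the permutation quantile process and then conclude by Slutsky- and continuous-mapping-type arguments. First I would exploit that the order indices entering the interval estimator are deterministic: since $l_i(p_r)=\lfloor n_i p_r - z_{\alpha/2}\sqrt{n_i}\sqrt{p_r(1-p_r)}\rfloor$ and $u_i(p_r)=\lfloor n_i p_r + z_{\alpha/2}\sqrt{n_i}\sqrt{p_r(1-p_r)}\rfloor$ (the boundary truncations being inactive for large $n_i$), the levels $l_i(p_r)/n_i$ and $u_i(p_r)/n_i$ both converge to $p_r$ while their gap is exactly $2z_{\alpha/2}\sqrt{p_r(1-p_r)}\,n_i^{-1/2}+O(n_i^{-1})$. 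Writing the permuted order statistics through the permutation empirical quantile function, the numerator of $\widehat\sigma_i^\pi(p_r)$ equals $\sqrt{n_i}\bigl[(\widehat F_i^\pi)^{-1}(u_i(p_r)/n_i)-(\widehat F_i^\pi)^{-1}(l_i(p_r)/n_i)\bigr]$, whereas the denominator $2z_{\alpha_n^*(p_r)/2}+2n_i^{-1/2}$ is deterministic and converges to $2z_{\alpha/2}$ because $\alpha_n^*(p_r)\to\alpha$ and $n_i^{-1/2}\to0$. The whole problem thus reduces to identifying the limit of the scaled spacing of permuted order statistics.

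Since a spacing is invariant under recentering, I would subtract the pooled population quantile $F^{-1}$ level by level and split
\begin{align*}
\sqrt{n_i}\bigl[(\widehat F_i^\pi)^{-1}(u_i(p_r)/n_i)-(\widehat F_i^\pi)^{-1}(l_i(p_r)/n_i)\bigr]
&= \bigl[Q_n^\pi(u_i(p_r)/n_i)-Q_n^\pi(l_i(p_r)/n_i)\bigr]\\
&\quad+\sqrt{n_i}\bigl[F^{-1}(u_i(p_r)/n_i)-F^{-1}(l_i(p_r)/n_i)\bigr],
\end{align*}
where $Q_n^\pi(t)=\sqrt{n_i}\bigl[(\widehat F_i^\pi)^{-1}(t)-F^{-1}(t)\bigr]$ denotes the permutation quantile process centered at the pooled quantile. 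For the deterministic term, Assumption \ref{ass:densities_perm} furnishes differentiability of $F$ at $q_r=F^{-1}(p_r)$ with $f(q_r)>0$, hence differentiability of $F^{-1}$ at $p_r$ with derivative $1/f(q_r)$; combined with the gap computed above this term converges to $2z_{\alpha/2}\sqrt{p_r(1-p_r)}/f(q_r)$. Dividing by the limiting denominator $2z_{\alpha/2}$ would then give $\widehat\sigma_i^\pi(p_r)\overset{p}{\rightarrow}\sqrt{p_r(1-p_r)}/f(q_r)$, which is exactly the reciprocal of the target, so that $\widehat f_{ir}^\pi=\sqrt{p_r(1-p_r)}/\widehat\sigma_i^\pi(p_r)\overset{p}{\rightarrow}f(q_r)$.

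The technical heart, and the step I expect to be the main obstacle, is showing that the stochastic remainder $Q_n^\pi(u_i(p_r)/n_i)-Q_n^\pi(l_i(p_r)/n_i)$ vanishes in conditional probability. Because both evaluation levels drift to $p_r$ at the finest rate $n_i^{-1/2}$, this is precisely an oscillation statement for the permutation quantile process at scale $n^{-1/2}$ and cannot be read off from pointwise convergence. I would obtain it by transferring the conditional weak convergence of the permutation empirical process $\sqrt{n_i}(\widehat F_i^\pi-\widehat F)$, i.e.\ the central limit theorem underlying Lemma \ref{lem:perm}, through the inverse map by means of the uniform Hadamard differentiability of the quantile functional in the sense of \citet[Section 3.9.4.2]{vaartWellner1996}. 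This yields conditional weak convergence of $Q_n^\pi$ to a Gaussian limit with continuous sample paths on a neighborhood of $p_r$, whose asymptotic equicontinuity forces $Q_n^\pi(s_n)-Q_n^\pi(t_n)\overset{p}{\rightarrow}0$ for any sequences $s_n,t_n\to p_r$; the uniform rather than ordinary differentiability is exactly what licenses evaluating the process at the shrinking, $n$-dependent levels $l_i(p_r)/n_i$ and $u_i(p_r)/n_i$ and is the main technical extension required here.

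It remains to assemble the matrix statement, which is routine. Plugging the just-established convergences of $\widehat\sigma_i^\pi(p_a)$ and $\widehat\sigma_i^\pi(p_b)$ together with $n/n_i\to\kappa_i^{-1}$ into the definition of $\mathbf{\widehat\Sigma}^{(i),\pi}$ and invoking the continuous mapping theorem, the factor $\sqrt{(p_a-p_a^2)(p_b-p_b^2)}$ cancels and
\begin{align*}
\mathbf{\widehat\Sigma}^{(i),\pi}_{ab}\overset{p}{\rightarrow}\kappa_i^{-1}\frac{\sqrt{p_a(1-p_a)}\,\sqrt{p_b(1-p_b)}}{f(q_a)f(q_b)}\cdot\frac{p_a\wedge p_b-p_ap_b}{\sqrt{(p_a-p_a^2)(p_b-p_b^2)}}=\kappa_i^{-1}\frac{p_a\wedge p_b-p_ap_b}{f(q_a)f(q_b)},
\end{align*}
which is the $(a,b)$ entry of $\mathbf{\widetilde\Sigma}^{(i),\pi}$. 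Collecting the blocks over $i=1,\ldots,k$ gives $\mathbf{\widehat\Sigma}^\pi\overset{p}{\rightarrow}\mathbf{\widetilde\Sigma}^\pi$, all conditionally on the data in probability, as claimed.
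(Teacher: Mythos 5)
Your reduction is sound up to the decisive step, and it matches the paper's: you correctly identify the denominator limit $2z_{\alpha/2}$, the evaluation levels $p_r\pm k_{n,j}$ with $\sqrt{n_i}\,k_{n,j}\to\xi=z_{\alpha/2}\sqrt{p_r(1-p_r)}$, the target limit $2\xi/f(q_r)$ for the scaled spacing, and the routine matrix assembly at the end. The gap sits exactly where you place "the technical heart," and it is genuine, for two reasons. First, the claim you reduce to is false as stated: conditionally on the data, $Q_n^\pi(t)=\sqrt{n_i}\bigl[(\widehat F_i^\pi)^{-1}(t)-F^{-1}(t)\bigr]$ does \emph{not} converge weakly to a Gaussian process, because it contains the conditionally deterministic drift $\sqrt{n_i}\bigl[\widehat F^{-1}(t)-F^{-1}(t)\bigr]$, which is $O_P(1)$ but non-convergent given the data; you would have to recenter the permutation process at $\widehat F^{-1}$ and handle the oscillation of the pooled (unconditional) quantile process as a separate term. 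Second, and more seriously, the tool you invoke does not exist where you cite it: Section 3.9.4.2 of \citet{vaartWellner1996} contains only \emph{ordinary} Hadamard differentiability of the quantile map (pointwise, and as a map into $\ell^\infty$ of an interval). The conditional delta method (their Theorem 3.9.5) requires differentiability that is \emph{uniform in the base point} -- needed because the permutation process is centered at the moving function $\widehat F$, not because the levels shrink, contrary to your reading -- and this uniform version is precisely what the paper had to prove from scratch; its Lemma \ref{lem:hada_diff} does so only for the fixed-level, real-valued functional $\Phi_p$ of \eqref{eqn:def_Phip}. A uniform, process-level differentiability statement (equivalently, conditional weak convergence of the permutation quantile process on a neighborhood of $p_r$, which is what your equicontinuity argument needs) is available neither in the paper nor in the reference, and establishing it would be at least as involved as Lemma \ref{lem:hada_diff} itself. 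So the crux of the lemma is asserted rather than proven.

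The paper circumvents all of this with an elementary identity: a horizontal shift of the level is a vertical shift of the function, $(\widehat F_i^\pi)^{-1}(p_r+\delta)=\Phi_{p_r}(\widehat F_i^\pi-\delta)$. Both order statistics in the numerator thus become values of the \emph{same fixed-level} functional $\Phi_{p_r}$ at the perturbed functions $\widehat F_i^\pi\mp k_{n,j}$, and by Lemma \ref{lem:emp_proc_F} one has the joint conditional convergence of $n_i^{1/2}\bigl(\widehat F_i^\pi-k_{n,1}-\widehat F,\ \widehat F_i^\pi+k_{n,2}-\widehat F\bigr)$ to $\bigl(\kappa_i^{1/2}\mathbb{G}_i-\xi,\ \kappa_i^{1/2}\mathbb{G}_i+\xi\bigr)$; the deterministic shifts simply ride along inside the perturbation $h_n$, which Lemma \ref{lem:hada_diff} permits since it allows arbitrary uniformly convergent $h_n\to h\in\mathbb{D}_q$. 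One application of Lemma \ref{lem:hada_diff} together with the uniform delta method then yields the joint limit $-f(q_r)^{-1}\bigl(\kappa_i^{1/2}\mathbb{G}_i(q_r)\mp\xi\bigr)$, in whose difference the common Gaussian term cancels, leaving exactly $2\xi/f(q_r)$. If you wish to keep your horizontal decomposition, you must either prove the uniform process-level differentiability yourself or adopt this vertical-shift reduction, which requires no statement about the quantile process beyond a single fixed level.
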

\begin{lemma}\label{lem:est_f_kern_perm_consis}
	Let $\widehat f_{K,i}^\pi$ be permutation counterpart of the kernel density estimator from Section \ref{sec:kernel}. Then under Assumption \ref{ass:densities_perm} we have the following conditional convergences given the data in probability:
	\begin{align*}
	&\sup_{x\in\R} \Bigl | \widehat f_{K,i}^\pi(x) - f(x) \Bigr| \overset{p}{\rightarrow} 0 \text{ and, thus, }\mathbf{\widehat \Sigma}_K^\pi\overset{p}{\rightarrow}\mathbf{\widetilde\Sigma}^\pi.
	\end{align*}
\end{lemma}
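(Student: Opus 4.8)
The proof parallels that of the unconditional statement \eqref{eqn:kernel_uniform} together with Lemma~\ref{lem:kern_density_estimator}, but is carried out conditionally on the data $\mathbf X$ and for a sample drawn \emph{without} replacement. The plan is first to establish the uniform conditional consistency
\begin{align*}
\sup_{x\in\R}\bigl|\widehat f_{K,i}^\pi(x) - f(x)\bigr|\overset{p}{\rightarrow}0,
\end{align*}
and then to deduce the statement about $\mathbf{\widehat\Sigma}_K^\pi$. For the latter step I would argue that the permuted group-$i$ sample is a without-replacement draw from the pooled empirical distribution $\widehat F$, which converges uniformly to $F$; hence its empirical quantiles satisfy $\widehat q_{ir}^\pi\overset{p}{\rightarrow}q_r=F^{-1}(p_r)$. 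Since $f$ is continuous and positive at $q_r$ by Assumption~\ref{ass:densities_perm}, the uniform convergence above yields $\widehat f_{K,i}^\pi(\widehat q_{ir}^\pi)\overset{p}{\rightarrow}f(q_r)>0$, and plugging this into the permutation analogue of \eqref{eqn:def_hat_sigma_kernel} together with $n/n_i\to\kappa_i^{-1}$ gives $\mathbf{\widehat\Sigma}_{K,ab}^{(i),\pi}\overset{p}{\rightarrow}\kappa_i^{-1}(p_a\wedge p_b-p_ap_b)/(f(q_a)f(q_b))=\mathbf{\widetilde\Sigma}_{ab}^{(i),\pi}$, matching the matrix defined right before Lemma~\ref{lem:est_f_perm_consis}.

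It therefore remains to prove the displayed uniform convergence, which I would split into a conditional-mean (bias) part and a conditional-fluctuation part. Writing $K_{i,h}(u)=h^{-1}K_i(u/h)$ and using that each permuted observation is marginally uniform on the pooled sample, the conditional mean equals $\E(\widehat f_{K,i}^\pi(x)\mid\mathbf X)=\int K_{i,h_{ni}}(x-t)\,\d\widehat F(t)$, i.e.\ the ordinary kernel estimate based on the full pooled sample. Splitting this as $\int K_{i,h_{ni}}(x-t)\,\d F(t)+\int K_{i,h_{ni}}(x-t)\,\d(\widehat F-F)(t)$, the first term converges to $f$ uniformly by the uniform continuity of $f$ (Assumption~\ref{ass:densities_perm}); for the second term I would integrate by parts and use that $K_i$ is of bounded variation, bounding it by $\|\widehat F-F\|_\infty\,\mathrm{Var}(K_i)/h_{ni}=O_P(n^{-1/2}/h_{ni})$, which is $o_P(1)$ under the bandwidth condition of Assumption~\ref{ass:kernel} (e.g.\ $h_{ni}=n_i^{-\theta}$, $\theta<1/2$).

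The fluctuation part is the main obstacle, precisely because the permuted observations are dependent, so the i.i.d.\ empirical-process tools underlying the unconditional case are not directly available. Here I would control $\sup_x|\widehat f_{K,i}^\pi(x)-\E(\widehat f_{K,i}^\pi(x)\mid\mathbf X)|$ by combining a pointwise concentration bound with a discretization argument. For fixed $x$, the centered average $n_i^{-1}\sum_{j=1}^{n_i}[K_i((x-X_{ij}^\pi)/h_{ni})-\E(K_i((x-X_{i1}^\pi)/h_{ni})\mid\mathbf X)]$ is a sum over a without-replacement sample of a bounded summand, so Hoeffding's inequality for sampling without replacement gives a tail bound of order $\exp(-c\,n_i\varepsilon^2 h_{ni}^2)$. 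To pass to the supremum I would use a fine grid on a compact interval (outside of which both $\widehat f_{K,i}^\pi$ and $f$ are negligible) and the decomposition of the bounded-variation kernel $K_i$ into two monotone pieces to control the oscillation between grid points, exactly as in the proof of \eqref{eqn:kernel_uniform}. A union bound together with the summability assumption $\sum_n\exp(-\gamma n h_{ni}^2)<\infty$ then forces the supremum to zero in conditional probability.

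The only genuinely new ingredient relative to the unconditional argument is replacing the concentration step for independent summands by its without-replacement counterpart. Alternatively, one may invoke the permutation empirical process results of \cite{vaartWellner1996} extended earlier in the paper, reducing the without-replacement average to its with-replacement (bootstrap) analogue up to an error that is asymptotically negligible under \eqref{eqn:ni_n_kappai}; the with-replacement version is then handled verbatim as in Lemma~\ref{lem:kern_density_estimator}. Either route delivers the uniform conditional consistency and, by the first paragraph, the asserted convergence $\mathbf{\widehat\Sigma}_K^\pi\overset{p}{\rightarrow}\mathbf{\widetilde\Sigma}^\pi$.
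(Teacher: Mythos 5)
Your proposal is correct in substance and shares the paper's master structure (condition on the data, split into a conditional-bias part and a conditional-fluctuation part, \`a la Nadaraya), but it resolves the key fluctuation step by a genuinely different and heavier route. The paper never uses a concentration inequality or a discretization: writing $K_{x,n,i}(u)=K_i([x-u]/h_{ni})$, it integrates by parts against $\d K_{x,n,i}$ (possible since $K_i$ has bounded variation $\mu_K$) and bounds the \emph{entire} supremum in one stroke,
\begin{align*}
\sup_{x\in\R}\Bigl|\widehat f_{K,i}^\pi(x)-\E\bigl(\widehat f_{K,i}^\pi(x)\mid\mathbf X\bigr)\Bigr|
\;\leq\;(h_{ni}^2 n_i)^{-1/2}\,\mu_K\, n_i^{1/2}\sup_{x\in\R}\bigl|\widehat F_{i}^\pi(x)-\widehat F(x)\bigr|,
\end{align*}
which is $o_P(1)$ because $n_i^{1/2}\|\widehat F_i^\pi-\widehat F\|_\infty=O_P(1)$ by the permutation empirical process convergence \eqref{eqn:lem:emp_proc_F:result} of Lemma~\ref{lem:emp_proc_F} (already needed for Lemma~\ref{lem:perm}) and $n_ih_{ni}^2\to\infty$ under Assumption~\ref{ass:kernel}. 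Your route instead replaces this by Hoeffding's inequality for sampling without replacement plus a grid/oscillation argument and a union bound using the summability $\sum_n\exp(-\gamma nh_{ni}^2)<\infty$; this is legitimate (Hoeffding's 1963 bound does hold for without-replacement sampling, and the chaining details can be copied from Nadaraya's proof), and it has the merit of being self-contained — it does not need the permutation Donsker result at all for this step — but it is considerably more laborious and the compactification/oscillation details you defer are exactly the fiddly part. Notably, you already deploy the integration-by-parts trick yourself, only for the \emph{bias} term (bounding $\int K\,\d(\widehat F-F)$ by $\|\widehat F-F\|_\infty\mu_K/h_{ni}$); applying that same bound to the fluctuation term with $\widehat F_i^\pi-\widehat F$ in place of $\widehat F-F$, and invoking Lemma~\ref{lem:emp_proc_F} for conditional tightness, would collapse your third paragraph to two lines and recover the paper's proof. (Your second suggested alternative — ``reducing without-replacement to its with-replacement bootstrap analogue'' — is not what the permutation process results do and should be dropped; the reduction is unnecessary once one bounds directly against $\|\widehat F_i^\pi-\widehat F\|_\infty$.) Finally, your handling of the bias term and of the deduction $\mathbf{\widehat\Sigma}_K^\pi\overset{p}{\rightarrow}\mathbf{\widetilde\Sigma}^\pi$ via conditional consistency of the permuted quantiles matches the paper's logic; indeed you correctly identify the conditional mean as the kernel smoother of the \emph{pooled} empirical distribution $\widehat F$, which the paper's displayed formula \eqref{eqn:proof_kernel_perm_Ef} intends (its ``$\widehat F_i$'' there is a typo).
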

Finally, combining all three lemmas, the continuous mapping theorem and Theorem 9.2.2 of \cite{rao:mitra:1971} proves Theorem \ref{theo:stat_perm}, compare to the argumentation in Section \ref{sec:asy_results}.

\subsubsection{Proof of Lemma \ref{lem:perm}}\label{sec:proof:lem:perm}

Basically, the proof follows the argumentation of the proof for Proposition \ref{prop:uncond}. Two aspects are more crucial than in the previous proof:
1. We do not know a concrete citation for the convergence of the empirical permutation process in case of more than two groups, i.e., the permutation version of \eqref{eqn:process_conv_F_uncon} has to be determined for $k\geq 3$. For the two-sample case ($k=2$) Theorems 3.7.1 and 3.7.2 of \cite{vaartWellner1996} can be applied. In the more general case $k\geq 2$, these results can be extended, that was already postulated by the authors \citep[Problem 3.7.2]{vaartWellner1996} but no specific details were given. 2. For the permutation approach we need uniform Hadamard differentiability instead of just Hadamard differentiability to apply the permutation $\delta$-method \citep[Theorem 3.9.5]{vaartWellner1996}. The extension of Lemma 3.9.20 of \cite{vaartWellner1996} to uniform differentiability is not straightforward but possible by a proper adjustment of the proof. 
\begin{lemma}\label{lem:emp_proc_F}
	Let $\mathbb{G}=(\mathbb{G}_1,\ldots,\mathbb{G}_k)$ be a zero-mean Gaussian process on $D(\R)^k$ with covariance structure
	\begin{align*}
	\E(\mathbb{G}_a(s) \mathbb{G}_b(t)) = \Bigl(\frac{1}{\kappa_a}\mathbf{1}\{a=b\} - 1\Bigr) \Bigl( F(s\wedge t) - F(t)F(s) \Bigr).
	\end{align*}
	for $a,b\in \{1,\ldots,k\}$ and $s,t\in\R$. Then
	\begin{align}\label{eqn:lem:emp_proc_F:result}
	n^{1/2} \Bigl( \widehat F^\pi_{ni} - \widehat F_{n} \Bigr)_{i=1,\ldots,k} \overset{\mathrm d}{\longrightarrow} \mathbb{G}\quad \text{on }D(\R)^k
	\end{align}
	almost surely given the observations.
\end{lemma}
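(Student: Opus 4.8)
The plan is to view $n^{1/2}(\widehat F_{ni}^\pi - \widehat F_{n})_{i=1,\ldots,k}$ as a vector of centred, exchangeably weighted empirical processes and to establish conditional weak convergence in $D(\R)^k$ (sup-norm), for almost every realisation of $\mathbf X$, along the classical route of finite-dimensional convergence plus asymptotic tightness. The organising observation is the exact identity $\sum_{i=1}^k n_i \widehat F_{ni}^\pi \equiv n\widehat F_{n}$ (permuting labels preserves the pooled sample), which forces any limit to satisfy $\sum_i \kappa_i \mathbb{G}_i = 0$; one checks directly that the stated kernel is degenerate in this direction, and this constraint is exactly what produces the factor $1/\kappa_a$ and the $-1$ in $\gamma^\pi$. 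Throughout, conditionally on $\mathbf X$ the permuted group $i$ is a simple random sample drawn without replacement from the pooled values $Z_1,\ldots,Z_n$, so every assertion is a statement about sampling from a uniformly random partition.

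First I would settle the finite-dimensional distributions. Fixing $t_1<\cdots<t_d$ and writing $v_j=(\mathbf{1}\{Z_j\le t_l\})_{l=1,\ldots,d}$, the block sums $\sum_{j\in\text{block }i}v_j$ are sums over disjoint simple random samples (without replacement) from the finite population $v_1,\ldots,v_n$. A multivariate finite-population central limit theorem (Hájek's theorem, or Hoeffding's combinatorial CLT) then gives joint asymptotic normality conditionally on $\mathbf X$, the Lindeberg condition being trivial since $v_j\in\{0,1\}^d$ is uniformly bounded. By Glivenko--Cantelli the conditional population mean converges a.s.\ to $(F(t_l))_l$ and the conditional population covariance to $(F(t_l\wedge t_{l'})-F(t_l)F(t_{l'}))_{l,l'}$; inserting these into the without-replacement variance factor $\tfrac{1}{n_i}\tfrac{n-n_i}{n-1}\to\tfrac{1}{\kappa_i}(1-\kappa_i)=\tfrac1{\kappa_i}-1$, together with the cross-block negative dependence, reproduces exactly the kernel $\mathbf\Sigma^\pi$ for almost every data sequence.

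The harder part, and the \emph{main obstacle}, is conditional asymptotic tightness over all of $\R$, which is precisely the point left open in \citep[Problem 3.7.2]{vaartWellner1996} for $k\ge 3$. I would reduce to one coordinate at a time through the Cram\'er--Wold device: a short computation shows $\sum_i \lambda_i\, n^{1/2}(\widehat F_{ni}^\pi-\widehat F_n)(t)=n^{-1/2}\sum_{j=1}^n (w_{nj}-\bar w_n)\mathbf{1}\{Z_j\le t\}$ with exchangeable weights $w_{nj}=\lambda_{i(j)}\,n/n_{i(j)}$, where $i(j)$ is the random block of $j$; this is a centred exchangeable-bootstrap empirical process with $\bar w_n=\sum_i\lambda_i$. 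The indicator family $\{\mathbf{1}\{\cdot\le t\}:t\in\R\}$ is a uniformly bounded VC (hence Donsker) class, so the exchangeable-bootstrap functional CLT \citep[Theorem 3.6.13]{vaartWellner1996} applies once the weight conditions are checked. These hold almost automatically here: the multiset of weight values is non-random, so $n^{-1}\sum_j(w_{nj}-\bar w_n)^2=\sum_i\tfrac{n_i}{n}(\lambda_i n/n_i-\bar w_n)^2\to\sum_i\kappa_i(\lambda_i/\kappa_i-\textstyle\sum_{i'}\lambda_{i'})^2=:c^2\ge 0$ deterministically, and $\max_j|w_{nj}-\bar w_n|/\sqrt n\to 0$ since the weights are bounded (as $n/n_i\to 1/\kappa_i$). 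Taking $\lambda=e_i$ for each $i$ yields conditional asymptotic tightness of every coordinate in $\ell^\infty(\R)$ for a.e.\ data sequence, hence joint tightness of the $k$-vector; verifying these exchangeable-weight hypotheses and carrying everything through in the almost-sure conditional sense is where the real work lies.

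Combining the two steps gives conditional weak convergence of $n^{1/2}(\widehat F_{ni}^\pi-\widehat F_n)_i$ to a Gaussian limit in $D(\R)^k$ under the sup-norm, for almost every $\mathbf X$; the Cram\'er--Wold reduction both identifies the marginal limits and, via the computation $c^2=\sum_i\lambda_i^2/\kappa_i-(\sum_i\lambda_i)^2$, confirms that they assemble into $\mathbb{G}$ with the stated cross-covariances $\gamma^\pi(c,d)$. Since the limit is a.s.\ sup-norm continuous (a time-transformed Brownian bridge, annihilated in the direction $\sum_i\kappa_i\mathbb{G}_i$), the sup-norm is the correct mode of convergence and the degeneracy poses no difficulty for the subsequent permutation $\delta$-method argument.
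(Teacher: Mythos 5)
Your proposal is correct, and its overall architecture coincides with the paper's: finite-dimensional convergence via a conditional combinatorial (finite-population) CLT --- the paper uses Theorems 2 and 3 of Hoeffding (1951) after a Cram\'er--Wold reduction, with the Glivenko--Cantelli property of a suitably enlarged class supplying a.s.\ convergence of the conditional means and covariances, exactly as in your first step --- followed by per-coordinate tightness and the observation that marginal tightness of the $k$ coordinates gives joint tightness \citep[Lemma 1.4.3]{vaartWellner1996}. The genuine difference is in how per-coordinate tightness is obtained. The paper gets it by \emph{imitating the proof} of the two-sample permutation result, Theorem 3.7.2 of \citet{vaartWellner1996} (a chain running through the unconditional multiplier CLT, Hoeffding's inequality and Lemmas 3.6.6, 2.9.2 and 2.3.6 of that book), which is exactly the extension to $k\geq 3$ that the authors say was posed but not worked out as Problem 3.7.2 there. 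You instead recognize $n^{1/2}(\widehat F^\pi_{ni}-\widehat F_n)$ as an exchangeable-bootstrap empirical process with sampling-without-replacement weights $w_{nj}=(n/n_i)\mathbf{1}\{j\text{ assigned to group }i\}$, which are nonnegative, sum to $n$, are deterministically bounded, and satisfy $n^{-1}\sum_j(w_{nj}-1)^2\to 1/\kappa_i-1>0$, so the packaged exchangeable-bootstrap CLT \citep[Theorem 3.6.13]{vaartWellner1996} applies off the shelf; this trades a proof-by-imitation for a clean citation, and your identity $\sum_i n_i\widehat F^\pi_{ni}\equiv n\widehat F_n$ neatly explains the degenerate direction of $\mathbf{\Sigma}^\pi$, which the paper only exploits later when it replaces $\mathbf{\Sigma}^\pi$ by the block-diagonal $\mathbf{\widetilde\Sigma}^\pi$ under $\mathbf{T}$. (Your invocation of Cram\'er--Wold for the tightness step is cosmetic: what does the work there is marginal-implies-joint tightness, and you only need the nonnegative-weight case $\lambda=e_i$, so the negativity of general $\lambda$ never conflicts with the theorem's hypotheses.) The one point you should patch is the mode of conditional convergence: Theorem 3.6.13 delivers convergence conditionally \emph{in outer probability}, whereas the lemma (and the paper's route via Theorem 3.7.2) asserts it \emph{almost surely} given the observations; to match the statement exactly you would need the almost-sure version of the exchangeable-bootstrap CLT of Praestgaard and Wellner (available for uniformly bounded classes such as indicators), or else weaken the conclusion to convergence in probability and argue via subsequences --- which, as the paper itself remarks in its treatment of local alternatives, is sufficient for every downstream use of the lemma.
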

\begin{lemma}[Uniform Hadamard differentiability]\label{lem:hada_diff}
	Let $G_n$ and $G$ be nondecreasing, real-valued functions. Moreover, let $G$ be continuously differentiable at $q = G^{-1}(p)$, $p\in(0,1)$, with positive derivative $g(q)>0$. Suppose that for some $M>0$
	\begin{align}
	&\sqrt{n}\sup_{ x \in \R} | G_n(x)-G(x)| \leq M \text{ and }\label{eqn:lem:hada:cond_Gn-G} \\
	&\sqrt{n}\sup_{|x|\leq K/\sqrt{n}} \Bigl |  G_n(q+x) -  G_n(q) - G(q+x) + G(q)\Bigr| \to 0 \label{eqn:lem:hada:cond_Gn-Gn-G+G}
	\end{align}
	for every $K>0$. Then
	\begin{align}\label{eqn:lem:hada_result}
	\sqrt{n} \Bigl( \Phi_p( G_n + n^{-1/2}h_n) - \Phi_p(G_n)  \Bigr) \to \Phi'_{p,G}(h) = - \frac{h(q)}{g(q)}
	\end{align}
	for every converging sequence $h_n$ such that $G_n + n^{-1/2}h_n\in \mathbb{D}$ and $h_n$ converges uniformly to $h\in \mathbb{D}_q$, where $\mathbb{D}_q$ consists all bounded functions being continuous at $q$.
\end{lemma}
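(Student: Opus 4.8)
Write $t_n = n^{-1/2}$, $H_n = G_n + t_n h_n$, and set $\xi_n = \Phi_p(G_n)$ and $\widehat\xi_n = \Phi_p(H_n)$, so that the assertion \eqref{eqn:lem:hada_result} reads $\sqrt n(\widehat\xi_n - \xi_n) \to -h(q)/g(q)$. The whole argument rests on the elementary equivalence for a nondecreasing, right-continuous $F \in \mathbb{D}$,
\begin{align}\label{eqn:inverse_equiv}
\Phi_p(F) > s \iff F(s) < p,
\end{align}
which lets us locate a quantile by evaluating the function. The strategy is to expand $H_n$ at points of the form $\xi_n + t_n u$ and to read off, via \eqref{eqn:inverse_equiv}, the critical value of $u$ at which $H_n(\xi_n + t_n u)$ crosses the level $p$. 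As a preliminary step I would establish the crude rate $\xi_n, \widehat\xi_n \in [q - K t_n, q + K t_n]$ for some fixed $K$ and all large $n$. This follows from \eqref{eqn:lem:hada:cond_Gn-G}, the identity $G(q)=p$ and the expansion $G(q + L t_n) = p + g(q) L t_n + o(t_n)$: for $L$ with $g(q)L$ exceeding the bound $M$ (plus $\sup_n\|h_n\|_\infty$ in the case of $H_n$), the function $G_n$ (resp.\ $H_n$) already exceeds $p$ at $q + L t_n$ and lies below $p$ at $q - L t_n$, so \eqref{eqn:inverse_equiv} pins the quantiles inside the window. This is what makes condition \eqref{eqn:lem:hada:cond_Gn-Gn-G+G} applicable around $q$ at the scale $t_n$.

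Next I would exploit \eqref{eqn:lem:hada:cond_Gn-Gn-G+G}. Writing $\eta_n$ for the quantity on its left-hand side (so $\eta_n \to 0$), a two-point version of the bound gives $|[G_n(q+x_2) - G_n(q+x_1)] - [G(q+x_2) - G(q+x_1)]| \leq 2\eta_n t_n$ for all $x_1, x_2$ in the window. Two consequences are needed. Since $G$ is continuous, letting the two points coalesce shows that every jump of $G_n$ inside the window is $o(t_n)$; combined with $G_n(\xi_n^-) \leq p \leq G_n(\xi_n)$ this yields $G_n(\xi_n) = p + o(t_n)$. Secondly, taking $x_1 = \xi_n - q$ and $x_2 = \xi_n - q + t_n u$ and invoking differentiability of $G$ at $q$ gives, for each fixed $u$, the increment $G_n(\xi_n + t_n u) - G_n(\xi_n) = g(q)\, t_n u + o(t_n)$.

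Combining the two displays above with $t_n h_n(\xi_n + t_n u) = t_n h(q) + o(t_n)$ — which holds because $h_n \to h$ uniformly while $h$ is continuous at $q$ and $\xi_n + t_n u \to q$ — I obtain $H_n(\xi_n + t_n u) = p + t_n\big(g(q) u + h(q)\big) + o(t_n)$. Evaluating \eqref{eqn:inverse_equiv} at the two values $u = -h(q)/g(q) \mp \epsilon$ then sandwiches $\sqrt n(\widehat\xi_n - \xi_n)$ between $-h(q)/g(q) - \epsilon$ and $-h(q)/g(q) + \epsilon$ for all large $n$; letting $\epsilon \downarrow 0$ proves \eqref{eqn:lem:hada_result}.

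\textbf{Main obstacle.} The difficulty, relative to the classical pointwise Hadamard differentiability of $\Phi_p$ at a fixed $G$ (Lemma 3.9.20 of \citet{vaartWellner1996}), is that here the base point $G_n$ drifts with $n$ and is controlled only to order $t_n$ by \eqref{eqn:lem:hada:cond_Gn-G}. A naive separate analysis of $\Phi_p(G_n)$ and $\Phi_p(H_n)$ fails, since the unconstrained value of $\sqrt n(G_n(q)-p)\in[-M,M]$ means $\sqrt n(\Phi_p(G_n)-q)$ need not even converge. The resolution is to compare the two quantiles directly so that this drift cancels; condition \eqref{eqn:lem:hada:cond_Gn-Gn-G+G} is precisely the local $o(t_n)$-equicontinuity of $G_n - G$ near $q$ that produces the cancellation and, simultaneously, controls the jumps of $G_n$.
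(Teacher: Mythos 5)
Your proposal is correct, and while it rests on the same two pillars as the paper's proof---use \eqref{eqn:lem:hada:cond_Gn-G} to pin both quantiles $\xi_n=\Phi_p(G_n)$ and $\widehat\xi_n=\Phi_p(G_n+n^{-1/2}h_n)$ inside an $O(n^{-1/2})$-window around $q$, then use \eqref{eqn:lem:hada:cond_Gn-Gn-G+G} to trade increments of $G_n$ over that window for increments of $G$, so that the $O(n^{-1/2})$ drift of the base point cancels when the two quantiles are compared directly---its execution is genuinely different and more elementary. The paper extracts subsequences along which $\sqrt n\bigl(q_n^{(j)}-q\bigr)\to L_j\in[-\infty,\infty]$, proves the $L_j$ are finite, and then manipulates difference quotients of $G$ evaluated at the two random points; this forces the choice $\delta\notin\{|L_1-L_2|,|L_1|,|L_2|\}$ to keep the denominators non-degenerate and requires the auxiliary Lemma~\ref{lem:analytic_derivative} on two-point difference quotients. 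You avoid quotients altogether: the additive expansions $G_n(\xi_n+n^{-1/2}u)-G_n(\xi_n)=g(q)n^{-1/2}u+o(n^{-1/2})$ and $G_n(\xi_n)=p+o(n^{-1/2})$ (the latter via your $o(n^{-1/2})$ jump control, a reusable intermediate fact), combined with the crossing characterization $\Phi_p(F)>s\iff F(s)<p$ evaluated at $u=-h(q)/g(q)\mp\epsilon$, yield the sandwich directly, with no subsequences, no auxiliary lemma, and no special values of $\delta$ to dodge. Two points to tighten when writing this up. First, in the jump-control step you invoke ``$G$ is continuous,'' but the lemma only assumes continuous differentiability at $q$; either read that hypothesis as giving continuity of $G$ in a neighbourhood of $q$, or note that the differentiability expansion at $q$ already bounds every jump of $G$ inside the window by $\epsilon K n^{-1/2}$ for each $\epsilon>0$, which is all you need. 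Second, the inequality $p\le G_n(\xi_n)$ uses right-continuity of $G_n$ itself, which is not literally among the hypotheses (only $G_n+n^{-1/2}h_n\in\mathbb{D}$ is assumed); the paper's proof makes the same implicit assumption in the second inequality of \eqref{eqn:lem:hada_G+h<p<} with $h_n^{(2)}\equiv 0$, and it is harmless since $G_n$ is an empirical distribution function in every application, but it deserves a remark.
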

The proofs for both lemmas are given subsequently. For our purposes, we make use of Lemma \ref{lem:hada_diff} with $G=F$, $q_r=G^{-1}(p_r)$ $(r=1,\ldots,m)$ and $G_n= \widehat F$ for fixed observations. Therefore, we need to ensure that \eqref{eqn:lem:hada:cond_Gn-G} and \eqref{eqn:lem:hada:cond_Gn-Gn-G+G} are fulfilled, where $M$ may depend on the underlying event $\omega\in\Omega$. By Lemma 1 of \cite{bahadur1966} we obtain for $a_n=n^{-1/2}(\log n)^{1/4}$ that almost surely
\begin{align}\label{eqn:sup_Fni(q+x)}
\sqrt{n}\sup_{|x|\leq a_n} \Bigl | \widehat F_{i}(q_{ir}+x) - \widehat F_{i}(q_{ir}) - F_i(q_{ir}+x) + F_i(q_{ir})\Bigr| \to 0
\end{align}
for every $i \in \{1,\ldots,k\}$ and $r\in\{1,\ldots,m\}$. We want to point out that the proof of \cite{bahadur1966} for his Lemma 1 is still valid for continuously differentiable distribution functions, a weaker assumption than the twice differentiability condition underlying his whole paper.  Note that due to its continuous differentiability $f_i$ is bounded in a neighborhood of $q_{ir}$ by $L_i$, say. Combining this, the mean value theorem, $\widehat F=\sum_{i=1}^k (n_i/n)\widehat F_{i}$ and \eqref{eqn:sup_Fni(q+x)} yields that almost surely
\begin{align}\label{eqn:serf}
&\sqrt{n}\sup_{|x|\leq K/\sqrt{n}} \Bigl | \widehat F(q_{ir}+x) - \widehat F(q_{ir}) - F(q_{ir}+x) + F(q_{ir})\Bigr| \nonumber \\
&\leq \sum_{i=1}^k \frac{n_i}{n}\, \sqrt{n}\sup_{|x|\leq a_n} \Bigl | \widehat F_{i}(q_{ir}+x) - \widehat F_{i}(q_{ir}) - F_i(q_{ir}+x) + F_i(q_{ir})\Bigr| \nonumber \\
&+ \sqrt{n} \sum_{i=1}^k \Bigl | \frac{n_i}{n} - \kappa_i \Bigr| \sup_{|x|\leq K/\sqrt{n}} \Bigl | F_i(q_{ir}+x) - F_i(q_{ir})\Bigr| \nonumber \\
&\leq o(1) +  \sqrt{n}\sum_{i=1}^k \Bigl | \frac{n_i}{n} - \kappa_i \Bigr| L_i \frac{K}{\sqrt{n}} \to 0.
\end{align}
Since we just want to show that the conditional convergence holds in probability given the observations, we can change the underlying probability space and even consider a triangular array $(X_{nij})_{i=1,\ldots,k;j=1,\ldots,n_i}$ of rowwise independent random variables $X_{nij}\sim F_i$. Hence, we can consider the special construction discussed in Section 3.1 of \cite{shorackWellner2009} for each group $i=1,\ldots,k$. In that case, \eqref{eqn:process_conv_F_uncon} even holds almost surely and not just in distribution. To be more specific, there are independent Brownian bridges $B_1,\ldots,B_k$ on $[0,1]$ and an appropriate triangular array $(X_{nij})_{i=1,\ldots,k;j=1,\ldots,n_i}$ such that almost surely
\begin{align*}
n^{1/2}(\widehat F_{i} - F_i)_{i=1,\ldots,k} \to \kappa_i^{-1/2}B_i\circ F_i.
\end{align*}
In particular, we have with probability one
\begin{align*}%\label{eqn:serf2}
&\limsup_{n\in\N}n^{1/2}\sup_{t\in\R}|\widehat F(t) - F(t)| \\
&\leq \sum_{i=1}^k \kappa_i^{-1/2}\sup_{t\in[0,1]}|B_i(t)| + \sum_{i=1}^k\limsup_{n\to\infty}n^{1/2} \Bigl | \frac{n_i}{n} - \kappa_i \Bigr|,
\end{align*}
where the latter sum is bounded by assumption and, thus, the complete right hand side is bounded for a fixed event $\omega\in\Omega$ with probability equal to one.
Last, we want to point out an effect caused by considering a triangular array, namely \eqref{eqn:serf} still holds in probability but not almost surely anymore. This difficulty can be solved by turning to subsequences. \\

From now on, we fix the observations. Due to the explanations above, we can assume without loss of generality that \eqref{eqn:lem:emp_proc_F:result}, 	 \eqref{eqn:lem:hada:cond_Gn-G} and \eqref{eqn:lem:hada:cond_Gn-Gn-G+G} hold with $G=F$, $G_n= \widehat F$ and $q_r=G^{-1}(p_r)$. Applying the (uniform) functional $\delta$-method \citep[Theorem 3.9.5]{vaartWellner1996} with the map $\Phi:\mathbb{D}^k\to R^{km}$ given by $\Phi(G_1,\ldots,G_k)=(\Phi_{p_1}(G_1),\ldots,\Phi_{p_m}(G_1),\ldots,\Phi_{p_m}(G_k))$ we obtain
\begin{align}\label{enu_quantil_conv_perm}
&n^{1/2}( (\widehat F_i^\pi)^{-1}(q_{ir}) -  \widehat F^{-1}(q_{ir}) )_{i=1,\ldots,k;r = 1,\ldots,m} \nonumber \\
&\overset{\mathrm d}{\longrightarrow} \Bigl( -\frac{\mathbb{G}_i(q_r)}{f(q_{ir})} \Bigr)_{i=1,\ldots,k;r = 1,\ldots,m}\overset{\mathrm d}{=} \mathbf{Z}^\pi.
\end{align}

\subsubsection{Proof of Lemma \ref{lem:est_f_perm_consis}}
Let the observations be fixed. As already discussed detailed in the proof of Lemma \ref{lem:perm}, we can assume without loss of generality that \eqref{eqn:lem:emp_proc_F:result}, 	\eqref{eqn:lem:hada:cond_Gn-G} and \eqref{eqn:lem:hada:cond_Gn-Gn-G+G} hold with $G=F$, $G_n= \widehat F$ and $q_r=G^{-1}(p_r)$ for every $r=1,\ldots,m$. Fix $k\in\{1,\dots,k\}$ and $r\in\{1,\ldots,m\}$. First, observe that
\begin{align*}
\big(2z_{\alpha_n^*(p)/2}+ 2n_i^{-1/2}\big)\widehat \sigma_i^{\pi,\text{PB}}(p_r) =  (\widehat F_{i}^\pi)^{-1}\bigl( p_r + k_{n,1} \bigr) - (\widehat F_{i}^\pi)^{-1}\bigl( p_r - k_{n,2} \bigr),
\end{align*}
where $n_i^{1/2}k_{n,j}\to \xi = z_{\alpha/2}(p_r(1-p_r))^{1/2}$. By the definition of $\Phi_p$, see \eqref{eqn:def_Phip}, we have
\begin{align*}
(\widehat F_{i}^\pi)^{-1}\Bigl( p_r + (-1)^{j+1}k_{n,j} \Bigr)  = \Phi_{p_r}( \widehat F_{i}^\pi - (-1)^{j+1}k_{n,j} ).
\end{align*}
From \eqref{eqn:lem:emp_proc_F:result} we can deduce that
\begin{align*}
n_i^{1/2}( \widehat F^\pi_{ni} - k_{n,1} - \widehat F, \widehat F^\pi_{ni} + k_{n,2} - \widehat F) \overset{\mathrm d}{\longrightarrow} (\kappa_i^{1/2}\mathbb{G}_i - \xi, \kappa_i^{1/2}\mathbb{G}_i + \xi) \quad \text{ on }D(\R)^{2}.
\end{align*}
Combining Lemma \ref{lem:hada_diff} and the (uniform) functional $\delta$-method \citep[Theorem 3.9.5]{vaartWellner1996} with the map $\Phi:\mathbb{D}^{2}\to \R^{2}$ given by $\Phi(G_1,G_{2})=(\Phi_{p_r}(G_1),\Phi_{p_r}(G_{2}))$ gives us
\begin{align*}
&n_i^{1/2} \Bigl( \Phi_{p_r}( \widehat F^\pi_{ni} + (-1)^{j}k_{n,j} ) - \Phi_{p_r}(\widehat F)  \Bigr)_{j=1,2}  \\
&\overset{\mathrm d}{\longrightarrow} -\frac{1}{f(q_{ir})}\Bigl( \kappa_i^{1/2}\mathbb{G}_i(q_r) + (-1)^{j} \xi \Bigr)_{j=1,2}.
\end{align*}
Altogether, we obtain in probability
\begin{align*}
\widehat \sigma_i^{\pi,\text{PB}}(p_r) \to \frac{1}{2z_{\alpha/2}} \frac{2\xi}{f(q_{ir})} = \frac{\sqrt{p_r(1-p_r)}}{f(q_{ir})}.
\end{align*}

\subsubsection{Proof of Lemma \ref{lem:est_f_kern_perm_consis}}
Let the observations be fixed and fix $i\in\{1,\ldots,k\}$. Subsequently, we use $\E(\cdot)$ as an abbreviation for the conditional expectation $\E(\cdot|\mathbf{X}_n)$ given the data $\mathbf{X}_n=(X_{ij})_{i=1,\ldots,k;j=1,\ldots,n_i}$. Similarly to the previous proofs, we can assume without loss of generality that \eqref{eqn:kernel_uniform} holds. Moreover, we can suppose that the conditional convergence in  \eqref{eqn:lem:emp_proc_F:result} holds. We adapt the proof idea of \cite{nadaraya:1965} for our purposes. In particular, the proof consists of two parts:
\begin{align*}
&\text{(i)}\quad V_{n,1}=\sup_{x\in\R}\Bigl | \widehat f_{K,i}^\pi(x) - \E(\widehat f_{K,i}^\pi(x)) \Bigr| \to 0 \text{ in probability}\\
&\text{(ii)}\quad V_{n,2}=\sup_{x\in\R}\Bigl | \E(\widehat f_{K,i}^\pi(x)) - f(x) \Bigr| \to 0.
\end{align*}
\underline{(i):} As abbreviation, define $K_{x,n,i}(u)=K( [x - u]/h_{ni})$. By assumption $K$ is of bounded variation and so is $K_{x,n,i}$, in particular, we have  $\int |\,\mathrm{ d }K_{x,n,i}|=\int |\,\mathrm{ d }K| =\mu_K < \infty$. Since $|K(x)|\leq |K(0)| + \mu_K$ holds, $K$ is also bounded in the classical sense. Now, observe that
\begin{align}\label{eqn:proof_kernel_perm_Ef}
\E(\widehat f_{K,i}^\pi(x)) = h_{ni}^{-1} \E \Bigl( K_{x,n,i}(X_{i1}^\pi) \Bigr) = h_{ni}^{-1} \int K_{x,n,i}(u)  \,\mathrm{ d }\widehat F_{i}(u).
\end{align}
Combining this, integration by parts \citep[Theorem A.1.2]{fleming:harrington:1991} and \eqref{eqn:lem:emp_proc_F:result} shows
\begin{align*}
V_{n,1} &= \sup_{x\in \R}\Bigl |h_{ni}^{-1} \int K_{x,n,i}(u) \,\mathrm{ d }( \widehat F_{i}^\pi - \widehat F_{i} )(u) \Bigr| \\
&= \sup_{x\in \R}\Bigl |h_{ni}^{-1} \int (\widehat F_{i}^\pi - \widehat F_{i} )(u-) \,\mathrm{ d }K_{x,n,i}(u)  \Bigr| \\
& \leq (h_{ni}^2n_i)^{-1/2}\mu_K n_i^{1/2}\sup_{x\in \R} \Bigr| \widehat F_{i}^\pi(x) - \widehat F_{i} (x) \Bigr| \to 0 \text{ in probability}
\end{align*}
because the assumptions on the bandwidth imply $h_{ni}^2n_i\to \infty$.

\underline{(ii):} From \eqref{eqn:proof_kernel_perm_Ef} it is easy to see that $\E(\widehat f_{K,i}^\pi(x))$ coincides with $f_{K,i}(x)$. Consequently, we obtain immediately from \eqref{eqn:kernel_uniform} that $V_{n,2}$ converges to $0$.

\subsubsection{Proof of Lemma \ref{lem:emp_proc_F}}
To verify the statement, we use empirical theory. For a detailed introduction into this field, we refer the reader to \cite{vaartWellner1996}.

Let $\epsilon_x$ be the Dirac measure centred at $x$, i.e., $\epsilon_x(A)=\mathbf{1}\{x\in A\}$.  For every group $i$, we introduce the group-specific empirical process ${\mathbb{P}_{i}}=n_i^{-1}\sum_{j=1}^{n_i}\epsilon_{X_{ij}}$ as well as its permutation counterpart ${\mathbb{P}_{i}^\pi}=n_i^{-1}\sum_{j=1}^{n_i}\epsilon_{X_{ij}^\pi}$ and the pooled process ${\mathbb{P}}= n^{-1}\sum_{i=1}^k\sum_{j=1}^{n_i}\epsilon_{X_{ij}}$. Moreover, let $P_{i}$ be the distribution of $X_{i1}$. We index the (empirical) measures $\mathbb P_{i}$, ${\mathbb{P}_{i}^\pi}$, $\mathbb{P}_{n}$ $P_{i}$ by the function class $\mathcal{F}=\{\mathbf{1}_{(-\infty,t]}:t\in\R\}$, which is a (universal) Donsker as well as a (universal) Glivenko-Cantelli class \citep[Examples 2.4.2 and 2.5.4]{vaartWellner1996}. To be more specific, we identify, e.g., ${\mathbb{P}_{i}}$ by $\{\int f \,\mathrm{ d }{\mathbb{P}_{i}}: f \in  \mathcal F\}$. In this way, we treat all of them as random elements of $l^\infty(\mathcal F)= \{Q\in \mathcal M_1(\R): \sup_{f\in\mathcal F}\int f \,\mathrm{ d }Q< \infty\}$, where $\mathcal M_1(\R)$ denotes the measure space of all probability measures on $\R$. Instead of Lemma \ref{lem:emp_proc_F} itself, we now prove the following empirical process version of it.
\begin{lemma}\label{lem:emp_process_perm}
	Let $\mathbb{G}^\pi_{P}$ be a zero-mean Gaussian process on $(l^\infty(\mathcal F))^k$ with covariance function $\mathbf{\Sigma}^\pi_P: (l^\infty(\mathcal F))^k\times (l^\infty(\mathcal F))^k\to \R^{k\times k}$, where for $f=(f_1,\ldots,f_k),g=(g_1,\ldots,g_k)\in (l^\infty(\mathcal F))^k$
	\begin{align}\label{eqn:gammaij}
	(\mathbf{\Sigma}^\pi_P(f,g))_{ij}= \Bigl( \frac{1}{\kappa_i}\mathbf{1}\{i=j\} - 1  \Bigr) P (f_i-Pf_i)(g_j-Pg_j).
	\end{align}
	%	\begin{align*}
	%		\Sigma^\pi = \left( \begin{array}{cccc}
	%		( \frac{1}{\kappa_1}-1) & -1 & \cdots & -1 \\
	%		-1 & \ddots & -1 & \vdots \\
	%		\vdots & -1 & \ddots & -1 \\[0.3eM)
	%		-1 & \cdots & -1 & (\frac{1}{\kappa_k}-1)
	%		\end{array}\right).
	%	\end{align*}
	Then given the observations we have almost surely:
	\begin{align}\label{eqn:lem_NAE_perm_main_state}
	n^{1/2}(\mathbb{P}_{1}^\pi-\mathbb{P}, \ldots,\mathbb{P}_{k}^\pi-\mathbb{P}) \overset{\mathrm d}{\longrightarrow} \mathbb{G}^\pi_{P}\quad\text{on }(l^\infty(\mathcal F))^k.
	\end{align}	
\end{lemma}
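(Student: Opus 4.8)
The plan is to represent the permutation process through exchangeable, sum-to-zero weights and then invoke the multiplier / exchangeable-bootstrap machinery for the $P$-Donsker class $\mathcal F$. Relabel the pooled observations as $Z_1,\ldots,Z_n$ and let $\xi\in\{1,\ldots,k\}^n$ be the assignment induced by the (uniform) permutation, so that exactly $n_i$ indices satisfy $\xi_\ell=i$. For each group $i$ set $c_{n\ell}^{(i)}=(n/n_i)\mathbf 1\{\xi_\ell=i\}-1$; a direct computation then gives the representation
\[
n^{1/2}\bigl(\mathbb P_i^\pi-\mathbb P\bigr)=n^{-1/2}\sum_{\ell=1}^n c_{n\ell}^{(i)}\,\epsilon_{Z_\ell},
\qquad \sum_{\ell=1}^n c_{n\ell}^{(i)}=0.
\]
The weights are exchangeable in $\ell$, uniformly bounded (so $\max_\ell|c_{n\ell}^{(i)}|/\sqrt n\to 0$), and satisfy $n^{-1}\sum_\ell c_{n\ell}^{(i)}c_{n\ell}^{(j)}\to \kappa_i^{-1}\mathbf 1\{i=j\}-1$, which is precisely the scalar factor appearing in $\mathbf\Sigma^\pi_P$ in \eqref{eqn:gammaij}. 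Verifying these three identities is the elementary but essential first step, since they encode the entire limiting covariance, including the cross-group coupling.

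Next I would collapse the $k$-group statement to a single coordinate by the Cram\'er--Wold device. For $\lambda=(\lambda_1,\ldots,\lambda_k)\in\R^k$ the linear combination becomes $\sum_i\lambda_i\, n^{1/2}(\mathbb P_i^\pi-\mathbb P)=n^{-1/2}\sum_\ell d_{n\ell}\epsilon_{Z_\ell}$, where $d_{n\ell}=\sum_i\lambda_i c_{n\ell}^{(i)}$ takes the value $\lambda_i(n/n_i)-\sum_j\lambda_j$ whenever $\xi_\ell=i$. Thus $d_{n\ell}$ is again a single family of exchangeable, sum-to-zero, bounded weights, and the coupled problem reduces to a one-sample weighted empirical process. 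To this process I would apply the conditional exchangeable/multiplier central limit theorem for Donsker classes \citep[Section~3.6]{vaartWellner1996}, whose hypotheses are exactly the weight conditions verified above together with square-integrability of the envelope of $\mathcal F$ (here the constant $1$). Finite-dimensional convergence — the process at finitely many points $t_1,\ldots,t_L$ — is the conditional joint asymptotic normality of the linear rank statistics $n^{-1/2}\sum_\ell d_{n\ell}\mathbf 1\{Z_\ell\le t_l\}$, which follows from Hoeffding's combinatorial CLT for sampling without replacement; the empirical second moments $\mathbb P(f-\mathbb Pf)(g-\mathbb Pg)$ converge to $P(f-Pf)(g-Pg)$ by the strong law, and this is what makes the conditional convergence hold off a single null set, i.e.\ for almost every data sequence.

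Asymptotic equicontinuity is the part that carries the real weight. Because $\mathcal F=\{\mathbf 1_{(-\infty,t]}:t\in\R\}$ is a universal Donsker class, I would control the modulus of continuity of $n^{-1/2}\sum_\ell d_{n\ell}\epsilon_{Z_\ell}$ over $\rho_P$-small balls by symmetrization and the multiplier inequalities \citep[Lemmas~3.6.6--3.6.7]{vaartWellner1996}, comparing it to the modulus of a Rademacher-symmetrized empirical process whose equicontinuity is guaranteed by the Donsker property. Combining the conditional finite-dimensional convergence with this conditional asymptotic tightness yields $n^{-1/2}\sum_\ell d_{n\ell}\epsilon_{Z_\ell}\overset{\mathrm d}{\longrightarrow}$ a tight Gaussian limit in $l^\infty(\mathcal F)$, almost surely given the data, for every $\lambda$. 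Taking $\lambda=e_i$ shows each coordinate of the $k$-vector is asymptotically tight, hence the vector is asymptotically tight in $(l^\infty(\mathcal F))^k$; together with the Cram\'er--Wold finite-dimensional limits this establishes \eqref{eqn:lem_NAE_perm_main_state} with the stated covariance $\mathbf\Sigma^\pi_P$.

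The \textbf{main obstacle} is precisely the joint tightness/equicontinuity for general $k$: \citet{vaartWellner1996} (Theorems~3.7.1--3.7.2) treat only the two-sample case, and their Problem~3.7.2 leaves the extension open. The exchangeable-weight representation together with the Cram\'er--Wold reduction to a single weight sequence is what makes the covariance bookkeeping and the uniform multiplier conditions transparent for arbitrary $k$; the residual care is to confirm that the combined weights $d_{n\ell}$ meet the multiplier conditions uniformly and that the given-data statements hold off one null set, which is again handled by the strong law applied to the finitely many second-moment functionals that arise.
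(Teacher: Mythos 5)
Your overall architecture coincides with the paper's own proof: a Cram\'er--Wold reduction whose scalar statistics are handled by Hoeffding's combinatorial CLT \citep{hoeffding1951} (your weight-covariance computation $n^{-1}\sum_\ell c^{(i)}_{n\ell}c^{(j)}_{n\ell}\to\kappa_i^{-1}\mathbf 1\{i=j\}-1$ is exactly the paper's $\gamma_n(i,r)$ calculation), plus coordinate-wise conditional tightness upgraded to joint tightness \citep[Lemmas 1.3.8 and 1.4.3]{vaartWellner1996}. However, two steps are not supported as written. First, the equicontinuity argument for general $\lambda$: Lemmas 3.6.6--3.6.7 of \citet{vaartWellner1996} are multiplier inequalities for i.i.d.\ multipliers, while your $d_{n\ell}$ are exchangeable but strongly dependent (they sum to zero and are functions of a single random permutation), so those lemmas cannot be invoked verbatim. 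Nor does the exchangeable-bootstrap CLT \citep[Theorem 3.6.13]{vaartWellner1996} apply directly, since it requires nonnegative weights $W_{n\ell}=1+d_{n\ell}$; for $\lambda=e_i$ one indeed has $W_{n\ell}=(n/n_i)\mathbf 1\{\xi_\ell=i\}\geq 0$, but for general $\lambda$ the quantity $1+\lambda_i(n/n_i)-\sum_j\lambda_j$ can be negative. The gap is harmless only because the general-$\lambda$ tightness claim is also unnecessary: as you observe at the end, coordinate tightness suffices, and each coordinate is genuinely a two-sample-type permutation process (group $i$ versus the rest). This is precisely how the paper proceeds --- it imitates the proof of Theorem 3.7.2 of \citet{vaartWellner1996} separately for each $i$ --- so your argument should be restricted to $\lambda=e_i$ at the tightness stage rather than claiming equicontinuity for arbitrary linear combinations.

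Second, the ``off a single null set'' issue is handled too loosely. Almost-sure conditional weak convergence in $(l^\infty(\mathcal F))^k$ requires the finite-dimensional limits to hold, for one fixed null set, simultaneously for \emph{all} finite collections of functions from the uncountable class $\mathcal F$; a pointwise strong law produces a null set per pair $(f,g)$, and there are uncountably many such pairs, so your appeal to ``the strong law applied to the finitely many second-moment functionals'' does not close this. The paper resolves it by a uniformity argument: using preservation results it shows that the enlarged class
\begin{align*}
\mathcal G=\Bigl\{\lambda_1 f_1+\lambda_2 f_1 f_2:\ \lambda_1,\lambda_2\in[-1,1],\ f_1,f_2\in\mathcal F\Bigr\}
\end{align*}
is a (universal) Glivenko--Cantelli class, whence $\sup_{g\in\mathcal G}|\mathbb P g-Pg|\to 0$ almost surely; this single almost-sure event controls every covariance functional appearing in the Hoeffding conditions at once. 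You need this uniform convergence (or an equivalent device) to legitimately fix the data and run the Cram\'er--Wold argument for all fidis on one event of probability one.
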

\begin{remark}\label{rem:perm}
	The statement of Lemma \ref{lem:emp_process_perm} is not restricted to the specific scenario considered here but is valid as long as $\mathcal F$ has a square $P$-integrable envelope function, i.e. $|f| \leq \widetilde F$ for all $f\in \mathcal F$ and $\int \widetilde F^2 \,\mathrm{ d }P < \infty$.
\end{remark}
\begin{proof}[Proof]
	There are different preservation results for VC, Glivenko-Cantelli and Donsker classes. Combining some of them, e.g., Theorem 3 of \cite{vaartWellner2000} and Problem 2.4.3 of \cite{vaartWellner1996}, ensures that
	\begin{align}\label{eqn:def_G}
	\mathcal{G}= \Bigl\{  \lambda_{1} f_{1} + \lambda_{2}f_{1}f_{2} \ : \lambda_{1},\lambda_2\in [-1,1],\,f_{1},f_2\in\mathcal F \Bigr\}
	\end{align}
	is still a (universal) Glivenko-Cantelli class. In particular, $\sup\{ |\mathbb{P}_{i}g - P_ig|: g\in\mathcal G\} \to 0$ almost surely for all $i=1,\ldots,k$ and, hence,
	\begin{align}\label{eqn:perm_Gliv_Can}
	\sup\{ |\mathbb{P}g - Pg|: g\in\mathcal G\} \to 0 \quad \text{almost surely}.
	\end{align}
	Let $\widetilde F$ be the envelope function mentioned in Remark \ref{rem:perm} (in the present setting, we have $\widetilde F \equiv 1$). It is a straight-forward exercise to show 
	\begin{align}\label{eqn:perm_maxg}
	\frac{1}{n_i}\max\{  \widetilde F(X_{ij})^2 : j=1,\ldots,n_i\} \to 0 \quad \text{almost surely}
	\end{align}
	for all $i=1,\ldots,k$. One proof strategy for \eqref{eqn:perm_maxg} is to follow the three steps: (i) dividing $\widetilde F$ into $\widetilde F_{1,M} = \widetilde F\mathbf{1}\{|\widetilde F|\leq M\}$ and $\widetilde F_{2,M} = \widetilde F \mathbf{1}\{|\widetilde F| > M\}$ for $M\in\N$. (ii) using the inequalities $(a+b)^2 \leq 4a^2+4b^2$ and  $\max_j\widetilde F_{2,M}(X_{ij})^2\leq \sum_j \widetilde F_{2,M}(X_{ij})^2$. (iii) letting first $n\to\infty$ and finally $M\to \infty$.
	
	By imitating the proof of Theorem 3.7.2 from \cite{vaartWellner1996}, we obtain (conditional) distributional convergence of $n^{1/2}(\mathbb{P}_i^\pi-\mathbb{P})$ to  $(1/\kappa_i - 1)^{1/2} \mathbb{\widetilde G}$ on $l^\infty(\mathcal F)$ almost surely given the observations, where $\mathbb{\widetilde G}$ is a $P$-Brownian bridge. From now on, we fix the observations such that this distributional convergences as well as \eqref{eqn:perm_Gliv_Can} and \eqref{eqn:perm_maxg} for all $i=1,\ldots,k$ hold. We can deduce from Lemma 1.3.8 of \cite{vaartWellner1996} that the sequence $(n^{1/2}(\mathbb{P}_i^\pi-\mathbb{P}))_{n\in \N}$ is asymptotically tight for all $i=1,\ldots,k$. Thus, the vector sequence $((n^{1/2}(\mathbb{P}_i^\pi-\mathbb{P}))_{1\leq i \leq k})_{n\in\N}$ is so as well \cite[Lemma 1.4.3]{vaartWellner1996}. Hence, it remains for \eqref{eqn:lem_NAE_perm_main_state} to verify the corresponding marginal convergence \citep[Theorem 1.5.4]{vaartWellner1996}. By a Cr\'{a}mer-Wold argument we can verify this by proving
	\begin{align}\label{eqn:lem_perm_C+wold}
	S_n^\pi=n^{1/2}\sum_{i=1}^k (\mathbb{P}_i^\pi g_i - \mathbb{P}g_i) \overset{\mathrm d}{\longrightarrow} G \sim N\Bigl( 0, \sum_{i,r=1}^k (\mathbf{\Sigma}^\pi_P(g_i,g_r))_{ir}  \Bigr)
	\end{align}
	for every $g_1,\ldots,g_k$ of the shape $g_i = \lambda_{i}f_{i}$ with $\lambda_{i}\in[-1,1]$ and $f_{i}\in\mathcal F$. Let $\Pi = \Pi_n$ be the underlying permutation of the index set $I=\{(i,j):i\in\{1,\ldots,k\},\,1\leq j \leq n_i\}$. Then
	\begin{align*}
	S_n^{\pi} = \sum_{(i,j)\in I} c_n( (i,j), \Pi(i,j))\quad\text{ with } c_n( (i,j),(r,s))= \frac{n^{1/2}}{n_i}( g_i(X_{rs})-\mathbb{P}g_i).
	\end{align*}
	Note that for every fixed $(i,j)\in I$ we have $\sum_{(r,s)\in I} c_n( (i,j), (r,s))=0$. By combining this with Theorems 2 and 3 of \cite{hoeffding1951}  it is sufficient for \eqref{eqn:lem_perm_C+wold} to show
	\begin{align}
	&\max\{ d_n((i,j),(r,s))^2:(i,j),(r,s)\in I\} \to 0\quad  \text{and} \label{eqn:lem_perm_max_d} \\
	& D_n =\frac{1}{n}\sum_{(i,j)\in I}\sum_{(r,s)\in I} d_n((i,j),(r,s))^2 \to \sum_{i,r=1}^k (\mathbf{\Sigma}^\pi_P(g_i,g_r))_{ir}, \label{eqn:lem_perm_sum_d}     
	\end{align}
	where
	\begin{align*}
	&d_n((i,j),(r,s)) = n^{1/2}  \frac{1}{n_i} \Bigl( g_i(X_{rs}) - \mathbb{P}g_i \Bigr) - n^{-1/2} \sum_{t=1}^k\Bigl( g_t(X_{rs}) - \mathbb{P}g_t \Bigr). \nonumber
	\end{align*}
	Note that $g_i^2 \leq \widetilde F^2$. Thus, \eqref{eqn:perm_maxg} implies
	\begin{align*}
	\frac{1}{n_i}\max\{  g_r(X_{ij})^2 : j=1,\ldots,n_i\} \to 0 
	\end{align*}
	for all $i,r=1,\ldots,k$. Combining this, \eqref{eqn:perm_Gliv_Can} and the general inequality $(a+b)^2 \leq 4a^2 + 4b^2$ we can deduce \eqref{eqn:lem_perm_max_d}.	Moreover, we obtain from elementary calculations that
	\begin{align*}
	D_n&= \Bigl[ \sum_{i=1}^k \frac{n}{n_i}   \mathbb P(g_i -\mathbb Pg_i)^2 \Bigr] - \sum_{i=1}^k\sum_{r=1}^k \mathbb P(g_i -\mathbb Pg_i)(g_r -\mathbb Pg_r) \\
	&= \sum_{i=1}^k\sum_{r=1}^k \gamma_n(i,r)\mathbb P(g_i -\mathbb Pg_i)(g_r -\mathbb Pg_r)\text{ with } \gamma_n(i,r)=\frac{n}{n_i} \mathbf{1}\{i=r\} - 1.
	\end{align*}
	Since $g_i,g_ig_r\in\mathcal G$ for all $i,r\in\{1,\ldots,k\}$ we can deduce \eqref{eqn:lem_perm_sum_d} from \eqref{eqn:perm_Gliv_Can}.
\end{proof}

\subsubsection{Proof of Lemma \ref{lem:hada_diff}}

Let $\mathbb{D}$ be the set consisting of all distribution functions $G:\R\to [0,1]$. For every $p\in (0,1)$ we define the corresponding inverse mapping $\Phi_p:\mathbb{D}\to\R$ \citep[compare to Section 3.9.4.2]{vaartWellner1996} by
\begin{align*}
\Phi_p(G) = G^{-1}(p) = \inf\{ t\in\R : G(t)\geq p\}.
\end{align*}
\cite{vaartWellner1996} already proved that $\Phi_p$ is Hadamard differentiable under certain regularity conditions. For our purposes, we need to extend their proof to uniform Hadamard differentiability. 

To shorten the proof, set $h_n^{(1)}= h_n$ and $h_n^{(2)} \equiv 0$ as well as $q_n^{(j)}=\Phi_p(G_n + n^{-1/2}h_n^{(j)})$ $(j=1,2)$. Since $h$ is bounded, we have $|h_n^{(j)}|\leq M_1$ for some $M_1>0$ and all sufficiently large $n\in\N$. Having subsequences in mind, we can suppose without loss of generality that for some $L_j \in\R\cup \{-\infty,\infty\}$
\begin{align*}
n^{1/2}( q_n^{(j)} - q) \to L_j .
\end{align*}
Let $\delta>0$ be arbitrary but $\delta\notin\{ |L_1 - L_2|, |L_1|,|L_2|\}$. By the definition of the inverse functional we have
\begin{align}\label{eqn:lem:hada_G+h<p<}
(G_n +  n^{-1/2}h_n^{(j)})( q_n^{(j)} - n^{-1/2}\delta) \leq p \leq (G_n +  n^{-1/2}h_n^{(j)})( q_n^{(j)}).
\end{align}
Combining this and \eqref{eqn:lem:hada:cond_Gn-G} yields
\begin{align}
&G ( q_n^{(j)} - n^{-1/2}\delta) \leq p + n^{-1/2}(M+M_1),\label{eqn:lem:hada_G<}\\
&G ( q_n^{(j)} ) \geq p - n^{-1/2}(M+M_1). \label{eqn:lem:hada_G>}
\end{align}
The remaining proof is divided into three steps. We will show: 1. $q_n^{(j)}\to q$, 2. $L_j\in\R$, 3. \eqref{eqn:lem:hada_result} holds.\\

1. Since $G$ is strictly increasing in every small enough neighborhood $(q-\eta,q+\eta)$, $\eta>0$, around $q$ we can deduce from \eqref{eqn:lem:hada_G<} and \eqref{eqn:lem:hada_G>} that for every $\eta>0$
\begin{align*}
q_n^{(j)} - n^{-1/2}\delta  \leq q + \eta \quad\text{and}\quad q_n^{(j)}   \geq q - \eta
\end{align*}
for all sufficiently large $n\in\N$. Letting $\eta$ tend to $0$ proves $q_n^{(j)}\to q$.\\

2. Since $\delta\neq |L_j|$ we have $q_n^{(j)} - q \pm n^{-1/2}\delta\neq 0$ for sufficiently large $n\in\N$. Rewriting \eqref{eqn:lem:hada_G<} and \eqref{eqn:lem:hada_G>} gives us
\begin{align*}
&n^{1/2}( q_n^{(j)} - n^{-1/2}\delta - q)\frac{ G ( q_n^{(j)} - n^{-1/2}\delta) - G(q) }{ q_n^{(j)} - n^{-1/2}\delta - q }  \leq  (M+M_1)\\
\text{and}\quad
&n^{1/2}( q_n^{(j)}  - q)\frac{ G ( q_n^{(j)} ) - G(q) }{ q_n^{(j)} - q }  \geq - (M+M_1).
\end{align*}
Consequently, combining these inequalities with the differentiability of $G$ we obtain that
\begin{align*}
\frac{M+M_1}{g(q)} + \delta \geq \lim_{n\to\infty} n^{1/2}| q_n^{(j)} - q| = |L_j|.
\end{align*}

3. First, observe that $\delta\neq |L_1 - L_2|$ implies
\begin{align}
& \Bigl| q_n^{(1)} - q_n^{(2)} \pm n^{-1/2}\delta\Bigr| \to | L_1 - L_2 \pm \delta| \neq 0,  \label{eqn:q1-q2_pm_delta}
\\
&\lim_{n\to\infty}\frac{n^{1/2}(q_n^{(2)}-q)}{n^{1/2}( q_n^{(1)} - n^{-1/2}\delta - q)} = \frac{L_2 }{ L_{1} - \delta} \neq 1,  \label{eqn:lem:hada:delta1} \\
&\lim_{n\to\infty}\frac{n^{1/2}(q_n^{(2)} - n^{-1/2}\delta - q)}{n^{1/2}( q_n^{(1)} - q)} = \frac{L_2 - \delta}{L_1} \neq 1. \label{eqn:lem:hada:delta2}
\end{align}
Due to the result of the second step, we can make use of \eqref{eqn:lem:hada:cond_Gn-Gn-G+G} with $K>\max\{|L_1|,|L_2|,\delta\}$ for $x = q_n^{(j)}$ as well as for $x = q_n^{(j)}\pm n^{-1/2}\delta$. Combining this with \eqref{eqn:q1-q2_pm_delta} as well as  the first and the second inequality from \eqref{eqn:lem:hada_G+h<p<}  for $j=1$ and for $j=2$, respectively, we can deduce that
\begin{align}\label{eqn:lem:had_0>...}
-h(q) &= \sqrt{n}(p-p) - h(q)\nonumber \\
&\geq \sqrt{n}(G_n +  n^{-1/2}h_n)( q_n^{(1)} - n^{-1/2}\delta) -  \sqrt{n}G_n ( q_n^{(2)})\nonumber \\
& = \sqrt{n}\Bigl( q_n^{(1)} - \frac{\delta}{\sqrt{n}} - q_n^{(2)} \Bigr)\frac{G(q_n^{(1)} - \delta/\sqrt{n}) - G(q_n^{(2)}) + o(n^{-1/2})}{ q_n^{(1)} - n^{-1/2}\delta - q_n^{(2)} } + o(1) \nonumber \\
& = \sqrt{n}\Bigl( q_n^{(1)} - \frac{\delta}{\sqrt{n}} - q_n^{(2)} \Bigr)\Bigl( \frac{G(q_n^{(1)} - \delta/\sqrt{n}) - G(q_n^{(2)}) }{ q_n^{(1)} - n^{-1/2}\delta - q_n^{(2)} } + o(1) \Bigr) + o(1).
\end{align}
Applying now the first inequality from \eqref{eqn:lem:hada_G+h<p<} for $j=2$ and the second one for $j=1$  we obtain analogously
\begin{align}\label{eqn:lem:had_0<...}
-h(q) \leq  \sqrt{n}\Bigl( q_n^{(1)} +  \frac{\delta}{\sqrt{n}} - q_n^{(2)} \Bigr)\Bigl[ \frac{G(q_n^{(1)} ) - G(q_n^{(2)} - n^{-1/2}\delta) }{ q_n^{(1)} + n^{-1/2}\delta - q_n^{(2)} } + o(1) \Bigr] + o(1).
\end{align}
Due to \eqref{eqn:lem:hada:delta1} and \eqref{eqn:lem:hada:delta2} we can apply the upcoming Lemma \ref{lem:analytic_derivative} to verify that the fractions in \eqref{eqn:lem:had_0>...} and \eqref{eqn:lem:had_0<...}, respectively, converge to $g(q)$. Altogether, 
\begin{align*}
-\frac{h(q)}{g(q)} - \delta   \leq \liminf_{n\to \infty} \sqrt{n}\Bigl( q_n^{(1)} - q_n^{(2)} \Bigr) \leq \limsup_{n\to \infty} \sqrt{n}\Bigl( q_n^{(1)} - q_n^{(2)} \Bigr)  \leq -\frac{h(q)}{g(q)} + \delta.
\end{align*}
Finally, letting $\delta$ tend to $0$ completes the proof.

\begin{lemma}\label{lem:analytic_derivative}
	Let $G$ be differentiable at $u$ with derivative $g$. Let $(\delta_{n,1})_{n\in\N}$, $(\delta_{n,2})_{n\in\N}$ be sequences in $\R$ converging to $0$ with $\limsup_{n\to\N}(\delta_{n,1}/\delta_{n,2})< 1$ or $\liminf_{n\to\N}(\delta_{n,1}/\delta_{n,2}) > 1$, where the convention $x/0=\infty$ for $x> 0$ is used. Then
	\begin{align*}
	\frac{G( u + \delta_{n,2}) - G( u + \delta_{n,1})}{ \delta_{n,2} - \delta_{n,1} } \to g(u).
	\end{align*}
\end{lemma}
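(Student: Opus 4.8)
The plan is to linearise $G$ around $u$ and then to show that the ratio hypothesis is exactly what prevents a vanishing denominator from spoiling the difference quotient. Since $G$ is differentiable at $u$, I would write $G(u+h)=G(u)+g(u)\,h+R(h)$ and set $\epsilon(h)=R(h)/h$ for $h\neq0$ and $\epsilon(0)=0$, so that $\epsilon(h)\to0$ as $h\to0$. Substituting $h=\delta_{n,2}$ and $h=\delta_{n,1}$, the value $G(u)$ cancels and the linear parts produce precisely $g(u)$, leaving
\begin{align*}
\frac{G(u+\delta_{n,2})-G(u+\delta_{n,1})}{\delta_{n,2}-\delta_{n,1}}
= g(u)+\frac{\epsilon(\delta_{n,2})\,\delta_{n,2}-\epsilon(\delta_{n,1})\,\delta_{n,1}}{\delta_{n,2}-\delta_{n,1}}.
\end{align*}
It then suffices to show that the remainder term on the right tends to $0$.

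For this I would use the elementary bound
\begin{align*}
\Bigl|\frac{\epsilon(\delta_{n,2})\,\delta_{n,2}-\epsilon(\delta_{n,1})\,\delta_{n,1}}{\delta_{n,2}-\delta_{n,1}}\Bigr|
\leq \max\bigl(|\epsilon(\delta_{n,1})|,|\epsilon(\delta_{n,2})|\bigr)\,
\frac{|\delta_{n,1}|+|\delta_{n,2}|}{|\delta_{n,2}-\delta_{n,1}|}.
\end{align*}
Because $\delta_{n,1},\delta_{n,2}\to0$, the maximum tends to $0$, so the product vanishes provided the amplification factor $(|\delta_{n,1}|+|\delta_{n,2}|)/|\delta_{n,2}-\delta_{n,1}|$ remains bounded. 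This boundedness is the only genuinely delicate point and the crux of the lemma: differentiability controls each $R(\delta_{n,j})$ only relative to $\delta_{n,j}$ itself, never relative to the gap $\delta_{n,2}-\delta_{n,1}$, which may be far smaller; the hypothesis on the ratio $\delta_{n,1}/\delta_{n,2}$ is imposed precisely to rule this out.

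To establish the bound I would argue through $\rho_n=\delta_{n,1}/\delta_{n,2}$. For every index with $\delta_{n,2}\neq0$, dividing numerator and denominator by $|\delta_{n,2}|$ turns the amplification factor into $(1+|\rho_n|)/|1-\rho_n|$, and the function $\rho\mapsto(1+|\rho|)/|1-\rho|$ is continuous away from $\rho=1$ and tends to $1$ as $\rho\to\pm\infty$; hence it is bounded on each half-line $(-\infty,c]$ with $c<1$ and on each $[c,\infty)$ with $c>1$. If $\limsup_n\rho_n<1$ then $\rho_n$ eventually lies in such a left half-line, and if $\liminf_n\rho_n>1$ it eventually lies in such a right half-line, so in both cases the factor is eventually bounded. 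The remaining indices with $\delta_{n,2}=0$ (and necessarily $\delta_{n,1}\neq0$, so that the quotient is defined) are handled directly, since there the remainder term collapses to $\epsilon(\delta_{n,1})\to0$; the symmetric degeneracy $\delta_{n,1}=0$ is already covered by $\rho_n=0$. Combining the main term $g(u)$ with the vanishing remainder then yields the assertion.
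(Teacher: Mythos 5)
Your proof is correct and takes essentially the same route as the paper: both rest on the same decomposition (the difference quotient equals $g(u)$ plus a remainder built from the linearization errors at $\delta_{n,1},\delta_{n,2}$ weighted by $\delta_{n,j}/(\delta_{n,2}-\delta_{n,1})$), and both reduce the claim to boundedness of these weights, which is exactly what the ratio hypothesis supplies. The only difference is bookkeeping: the paper extracts subsequences along which $\delta_{n,1}/\delta_{n,2}\to M\in[-\infty,1)$ and computes the limits of the weights, whereas you bound $(1+|\rho|)/|1-\rho|$ uniformly on half-lines away from $\rho=1$, which treats the two hypothesis cases symmetrically, dispenses with subsequence extraction, and handles the degenerate indices $\delta_{n,2}=0$ explicitly.
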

\begin{proof}[Proof]
	By symmetry, we just need to consider $\limsup_{n\in\N}(\delta_{n,1}/\delta_{n,2})< 1$. Note that the statement follows obviously from the differentiability of $G$ if $\delta_{n,j}\equiv 0$ for $j=1$ or $j=2$. Having classical subsequence arguments in mind, we can assume that $\delta_{n,1},\delta_{n,2}\neq 0$, $\delta_{n,1}/\delta_{n,2} \to M\in[-\infty,1)$, where $M = -\infty$ is allowed. Observe that
	\begin{align}\label{eqn:lem:analy_der_proof}
	&\frac{G( u + \delta_{n,2}) - G( u + \delta_{n,1})}{ \delta_{n,2} - \delta_{n,1} } \nonumber \\
	&=
	\frac{G( u + \delta_{n,2}) - G( u )}{\delta_{n,2}} \frac{\delta_{n,2}}{ \delta_{n,2} - \delta_{n,1} } -
	\frac{G( u + \delta_{n,1}) - G( u )}{\delta_{n,1}} \frac{\delta_{n,1}}{ \delta_{n,2} - \delta_{n,1} }.
	\end{align}
	If $M= - \infty$ then $\delta_{n,2}/( \delta_{n,2} - \delta_{n,1} ) \to 0$ and $\delta_{n,1}/(\delta_{n,2} - \delta_{n,1}) \to 1$. Otherwise, i.e., if $M\in(-\infty,1)$, then $\delta_{n,2}/( \delta_{n,2} - \delta_{n,1} ) \to (1 - M )^{-1}$ and $\delta_{n,1}/(\delta_{n,2} - \delta_{n,1}) \to   (1 - M )^{-1} - 1$. Combining both cases with \eqref{eqn:lem:analy_der_proof} and the differentiability of $G$ proves the statement.
\end{proof}

\subsection{Proof of Theorem \ref{theo:local_alt}}
Here, we consider the triangular array $X_{nij}$ from Section \ref{sec:local_alt} fulling Assumption \ref{ass:local_alt}. The following two lemmas, which extend \eqref{eqn:process_conv_F_uncon} and Lemma \ref{lem:emp_proc_F}, are the key steps to derive the desired statement in Theorem \ref{theo:local_alt}. Their proofs can be found subsequently.
\begin{lemma}\label{lem:local_conv_q}
	We have
	\begin{align}
	n_i^{1/2}(\widehat F_{i} - F_{ni}) \overset{\mathrm d}{\longrightarrow} B\circ F_i \text{ on }D(\R),
	\end{align}
	where $B$ is a Brownian bridge on $[0,1]$.
\end{lemma}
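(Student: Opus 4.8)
The plan is to reduce the claim to the classical Donsker theorem for the uniform empirical process via the probability integral transform, and then transfer the limit through the composition with $F_{ni}$. Since each $F_{ni}$ is absolutely continuous, the variables $U_{nij}=F_{ni}(X_{nij})$ are i.i.d.\ uniform on $[0,1]$ within each row, and the standard identity $\mathbf{1}\{X_{nij}\le x\}=\mathbf{1}\{U_{nij}\le F_{ni}(x)\}$ holds almost surely for continuous $F_{ni}$. Writing $\widehat G_{n_i}$ for the empirical distribution function of $U_{ni1},\ldots,U_{nin_i}$ and $\alpha_{n_i}(t)=n_i^{1/2}(\widehat G_{n_i}(t)-t)$ for the associated uniform empirical process, this yields the pathwise representation
\begin{align*}
n_i^{1/2}\bigl(\widehat F_{i}(x)-F_{ni}(x)\bigr)=\alpha_{n_i}\bigl(F_{ni}(x)\bigr),\qquad x\in\R.
\end{align*}

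First I would invoke the classical Donsker theorem: since for every $n$ the $U_{nij}$ are i.i.d.\ uniform, the marginal law of $\alpha_{n_i}$ does not depend on the row, so $\alpha_{n_i}\overset{\mathrm d}{\longrightarrow}B$ on $D([0,1])$ equipped with the sup-norm, where $B$ is a standard Brownian bridge with a.s.\ continuous sample paths. The triangular-array structure is therefore harmless at this stage, as it enters only through the deterministic time change $F_{ni}$.

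Next I would transfer this to the composition $\alpha_{n_i}\circ F_{ni}$. Assumption \ref{ass:local_alt}(\ref{enu:ass:al_alt_F}) gives $\sup_{x}|F_{ni}(x)-F_i(x)|\le M n^{-1/2}\to 0$, hence $F_{ni}\to F_i$ uniformly. I would then consider the composition map $(\phi,\psi)\mapsto\phi\circ\psi$ and note it is continuous at every pair $(\phi,\psi)$ with $\phi$ continuous, via the split $\|\phi_n\circ\psi_n-\phi\circ\psi\|_\infty\le\|\phi_n-\phi\|_\infty+\|\phi\circ\psi_n-\phi\circ\psi\|_\infty$, where the second term vanishes because a continuous $\phi$ on the compact set $[0,1]$ is uniformly continuous. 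Since $F_{ni}$ is deterministic, Slutsky's lemma yields the joint convergence $(\alpha_{n_i},F_{ni})\overset{\mathrm d}{\longrightarrow}(B,F_i)$, and the continuous mapping theorem — applicable because $B$ has continuous paths a.s., so the limit lands in the continuity set of the composition map — then gives $\alpha_{n_i}\circ F_{ni}\overset{\mathrm d}{\longrightarrow}B\circ F_i$ on $D(\R)$, which is the assertion.

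The main obstacle I anticipate is precisely this last step: the process is centered at the row-specific $F_{ni}$ while the limit is expressed through the fixed $F_i$, so one must verify that the $O(n^{-1/2})$ discrepancy between $F_{ni}$ and $F_i$ does not perturb the weak limit. This is exactly what the continuity of the composition map at $(B,F_i)$ and the a.s.\ uniform continuity of the Brownian bridge deliver; alternatively one could pass to almost surely convergent versions by Skorokhod's representation theorem and bound $\|\alpha_{n_i}\circ F_{ni}-B\circ F_i\|_\infty$ directly by $\|\alpha_{n_i}-B\|_\infty+\sup_{|s-t|\le M n^{-1/2}}|B(s)-B(t)|$. Some care is also needed to justify the almost sure identity underlying the probability integral transform when $F_{ni}$ has flat stretches, but since the densities vanish there the sample points avoid them almost surely, so the reduction is valid.
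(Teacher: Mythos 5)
Your proof is correct, but it takes a genuinely different route than the paper. The paper stays inside the abstract empirical-process framework: it indexes the empirical measures by the class $\mathcal F=\{\mathbf{1}_{(-\infty,t]}:t\in\R\}$ and invokes the Donsker theorem for triangular arrays (Theorem 2.8.10 in \citealp{vaartWellner1996}, Section 2.8.3), verifying its conditions — (2.8.5) via the uniform convergence $F_{ni}\to F_i$ implied by Assumption~\ref{ass:local_alt}(i), (2.8.6) via the constant envelope $\widetilde F\equiv 1$, and the bracketing-entropy requirement via Examples 2.5.4 and 2.5.7 there. You instead exploit the one-dimensional structure: the probability integral transform turns the row-dependent observations into i.i.d.\ uniforms whose empirical process has a law depending on $n_i$ alone, so the \emph{classical} Donsker theorem applies, and the entire triangular-array effect is pushed into the deterministic time change $F_{ni}$, which you then remove using the $O(n^{-1/2})$ bound of Assumption~\ref{ass:local_alt}(i) together with the a.s.\ (uniform) continuity of the Brownian bridge paths — either via the extended continuous mapping theorem or via an almost-sure representation. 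Your argument is more elementary and self-contained for this lemma, avoiding Section 2.8 of \citet{vaartWellner1996} altogether; its price is that it is tied to the cdf-indexed, real-valued setting, whereas the paper's route generalizes to other function classes and feeds directly into the $l^\infty(\mathcal F)$ machinery reused later for the permutation processes (Lemmas~\ref{lem:emp_process_perm} and \ref{lem:emp_proc_perm_local_alt}). Two technical points you touch on are handled adequately: the a.s.\ validity of the identity $\mathbf{1}\{X_{nij}\le x\}=\mathbf{1}\{U_{nij}\le F_{ni}(x)\}$ simultaneously in $x$ (the exceptional set is the countable union of flat stretches of $F_{ni}$, each of $P_{ni}$-measure zero), and the measurability issues inherent to sup-norm convergence of empirical processes, which require the outer-probability framework (Dudley's representation theorem and the extended continuous mapping theorem in Chapter 1 of \citealp{vaartWellner1996}) but are standard and implicitly assumed by the paper as well.
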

\begin{lemma}\label{lem:local_emp_proc_F}
	Let $\mathbb{G}=(\mathbb{G}_1,\ldots,\mathbb{G}_k)$ be given as in \eqref{lem:emp_proc_F}.  Then
	\begin{align}\label{eqn:local_lem:emp_proc_F:result}
	n^{1/2} ( \widehat F^\pi_{i} - \widehat F )_{i=1,\ldots,k} \overset{\mathrm d}{\longrightarrow} \mathbb{G}\quad \text{on }D(\R)^k
	\end{align}
	given the observations in probability.
\end{lemma}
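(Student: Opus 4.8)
The plan is to reduce the statement to the empirical-process permutation central limit theorem of Lemma~\ref{lem:emp_process_perm}, exactly as in the proof of Lemma~\ref{lem:emp_proc_F} for the fixed-distribution case. The key structural observation is that, conditionally on the pooled sample, the permutation process $n^{1/2}(\widehat F^\pi_i-\widehat F)_{i=1,\ldots,k}$ is the \emph{same} functional of the unlabelled data as in the non-local setting: a permutation merely reshuffles group memberships among the pooled values, so its conditional law sees the triangular array only through the pooled empirical measure $\mathbb{P}_n=n^{-1}\sum_{i,j}\epsilon_{X_{nij}}$. It therefore suffices to re-verify, now for the array $X_{nij}\sim F_{ni}$, the three ingredients on which the proof of Lemma~\ref{lem:emp_process_perm} rests: the Glivenko--Cantelli convergence \eqref{eqn:perm_Gliv_Can} of $\mathbb{P}_n$ to the pooled limit $P$ corresponding to $F=\sum_{i=1}^k\kappa_iF_i$ over the class $\mathcal G$ of \eqref{eqn:def_G}, the maximal negligibility \eqref{eqn:perm_maxg}, and the resulting convergence of $D_n$ in \eqref{eqn:lem_perm_sum_d}.

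First I would establish the Glivenko--Cantelli step. Writing $\mathbb{P}_ng=\sum_{i=1}^k(n_i/n)\,\mathbb{P}_{ni}g$ with $\mathbb{P}_{ni}$ the group-$i$ empirical measure, the law of large numbers gives $\mathbb{P}_{ni}g\to\int g\,\mathrm{d}F_{ni}$, while Assumption~\ref{ass:local_alt}(\ref{enu:ass:al_alt_F}) forces $\int g\,\mathrm{d}F_{ni}\to\int g\,\mathrm{d}F_i$; with $n_i/n\to\kappa_i$ this yields $\mathbb{P}_ng\to Pg$ for each $g\in\mathcal G$. Uniformity over $\mathcal G$ follows because its members are bounded linear combinations of indicators (a product of two indicators being again an indicator), so that $\sup_{g\in\mathcal G}|\mathbb{P}_ng-Pg|\le 2\sup_{t\in\R}|\widehat F(t)-F(t)|$; the latter tends to $0$ in probability by the Dvoretzky--Kiefer--Wolfowitz inequality applied within each group together with Assumption~\ref{ass:local_alt}(\ref{enu:ass:al_alt_F}). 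Since the envelope is $\widetilde F\equiv1$, the maximal condition \eqref{eqn:perm_maxg} is immediate, and the explicit computation of $D_n$ in the proof of Lemma~\ref{lem:emp_process_perm} uses only this Glivenko--Cantelli convergence, so it again produces the target covariance with $P$ the pooled limit $F$.

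The main obstacle --- and the only genuine departure from the proof of Lemma~\ref{lem:emp_proc_F} --- is that the triangular array forbids fixing a single underlying probability space, so the almost-sure conditional convergence of Lemma~\ref{lem:emp_process_perm} degrades to convergence in probability given the observations. I would handle this exactly as in the proof of Lemma~\ref{lem:perm}: by the subsequence criterion for convergence in probability, it suffices that every subsequence admit a further subsequence along which \eqref{eqn:perm_Gliv_Can} and \eqref{eqn:perm_maxg} hold almost surely. Along such a subsequence the hypotheses of Lemma~\ref{lem:emp_process_perm} are met verbatim, so its conclusion \eqref{eqn:lem_NAE_perm_main_state} holds, and identifying $\mathcal F$ with the indicators (as in the proof of Lemma~\ref{lem:emp_proc_F}) transfers this to $n^{1/2}(\widehat F^\pi_i-\widehat F)_{i}$ on $D(\R)^k$. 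The one point demanding care is that the Hoeffding conditions \eqref{eqn:lem_perm_max_d}--\eqref{eqn:lem_perm_sum_d} must be checked against the pooled limit $F$ rather than the group-specific $F_{ni}$; this is automatic, since $\mathbf{\Sigma}^\pi_P$ is built from $P$-moments and the Glivenko--Cantelli convergence above is precisely to $P$.
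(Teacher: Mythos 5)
There is a genuine gap: you never establish asymptotic tightness (conditional equicontinuity) of the permutation processes, and your reduction hides exactly this point. The proof of Lemma~\ref{lem:emp_process_perm} does \emph{not} rest only on the three ingredients you list. The Glivenko--Cantelli convergence \eqref{eqn:perm_Gliv_Can}, the negligibility \eqref{eqn:perm_maxg} and the limit \eqref{eqn:lem_perm_sum_d} of $D_n$ enter only the finite-dimensional (marginal) convergence, via Hoeffding's combinatorial central limit theorem. Tightness of $(n^{1/2}(\mathbb{P}_i^\pi-\mathbb{P}))_{n}$ is obtained in that proof by a separate, \emph{unconditional} argument: ``imitating the proof of Theorem 3.7.2 of \citet{vaartWellner1996}'', whose engine is the multiplier central limit theorem (their Theorem 2.9.2) applied to the data-generating sequence. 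That theorem is valid for i.i.d.\ observations from fixed distributions --- the setting of Model \eqref{eq:model} --- and precisely \emph{not} for triangular arrays $X_{nij}\sim F_{ni}$. Hence your claim that along a good subsequence ``the hypotheses of Lemma~\ref{lem:emp_process_perm} are met verbatim'' conflates the lemma's hypotheses (which include the data model) with two intermediate facts established inside its proof. Your structural observation that the conditional permutation law sees the data only through the pooled empirical measure is true at each fixed $n$, but it does not turn conditional tightness into a consequence of Glivenko--Cantelli convergence: the statements ``almost surely given the data'' and ``in probability given the data'' quantify over the random data sequence, and the control of the conditional modulus of continuity is exactly the part of the argument that is carried out unconditionally and therefore feels the triangular-array structure.

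This missing step is where the paper's proof (Lemma~\ref{lem:emp_proc_perm_local_alt}) spends its effort. The marginal part is handled essentially as you do --- your Glivenko--Cantelli argument via the row-wise DKW inequality plus Assumption~\ref{ass:local_alt}\eqref{enu:ass:al_alt_F} is a legitimate (and slightly more elementary) route to the paper's \eqref{eqn:perm_Gliv_Can_triangular_arrays}, and the subsequence device and the $D_n$ computation carry over. But for tightness the paper goes back into the proofs of Theorems 2.9.2 and 3.7.1 of \citet{vaartWellner1996}, checks that the inequalities used there (Hoeffding's inequality, Proposition A.1.9, and Lemmas 3.6.6, 2.9.2, 2.3.6) remain valid for triangular arrays, and then deduces the required conditional equicontinuity from the unconditional equicontinuity of the group-specific processes $(\sqrt{n_i}(\mathbb{P}_{ni}-P_{ni}))_{n}$, which in turn is supplied by the triangular-array Donsker theorem (Theorem 2.8.10 of \citet{vaartWellner1996}) already invoked for \eqref{eqn:emp_proc_conv_Pni-Pni} in the proof of Lemma~\ref{lem:local_conv_q}. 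To repair your proof you must add such an equicontinuity argument (or an alternative one, e.g.\ exploiting that on the indicator class the permutation process is a time-change by $\widehat F$ of a data-free sampling-without-replacement process); as written, the conclusion \eqref{eqn:lem_NAE_perm_main_state} is simply not available for triangular arrays.
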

Since $f_{ni}$ converges uniformly to $f_i$ in a neighborhood of $q_{ir}$, a continuity point of $f_i$, we can deduce from the mean value theory that
\eqref{eqn:lem:hada:cond_Gn-Gn-G+G} holds for $q= q_{ir}$, $G_n = F_{ni}$ and $G=F_i$. Due to this, Assumption \ref{ass:local_alt}\eqref{enu:ass:al_alt_F} and Lemma \ref{lem:hada_diff} we can follow the proof argumentation for Proposition \ref{prop:uncond}, while applying this time the uniform functional $\delta$-method \citep[Theorem 3.9.5]{vaartWellner1996}, to obtain:
\begin{align*}
\sqrt{n} \mathbf{T} \mathbf{\widehat q} = \sqrt{n} \mathbf{T} (\mathbf{\widehat q} - \mathbf{q}_n) + \sqrt{n} \mathbf{T}  \mathbf{q}_n \overset{\mathrm d}{\longrightarrow} \mathbf{Y} + \boldsymbol{\theta} \sim N(\boldsymbol{\theta}, \boldsymbol{T}\boldsymbol{\Sigma}\mathbf{T}'),
\end{align*}
where $\mathbf{Y}$ and $\boldsymbol{\Sigma}$ are defined as in the paragraph below Proposition \ref{prop:uncond}. The extension of the covariance matrix estimators' consistency, i.e., Lemmas \ref{lem:kern_density_estimator}--\ref{lem:PB_estimator}, to the present local alternatives is straightforward and thus left to the reader. Finally, $S_n(\mathbf{T})$ converges in distribution to a non-central $\chi^2_{\text{rank}(\mathbf{T})}(\delta)$ with non-centrality parameter $\delta = \boldsymbol{\theta}'( \mathbf{T} \mathbf{\Sigma} \mathbf{T})^+ \boldsymbol{\theta}$; that proves the statement about the asymptotic test's power

As stated in Lemma \ref{lem:local_emp_proc_F}, considering the triangular array $X_{nij}$ instead of $X_{ij}$ does not affect the (conditional) convergence of the empirical distribution functions. Hence, it is not surprising that the same is true for the empirical quantiles. To prove the latter, we can follow the argumentation for Theorem \ref{theo:stat_perm}, the only detail which need more clarification is \eqref{eqn:sup_Fni(q+x)}. For this purpose, we want to remind that Assumption \ref{ass:local_alt}\eqref{enu:ass:local_alt_fi_perm} and the mean value theorem implies for every $K>0$
\begin{align*}
\sqrt{n}\sup_{|x|\leq K/\sqrt{n}} \Bigl |  F_{ni}(q+x) -  F_{ni}(q) - F_i(q+x) + F_i(q)\Bigr| \to 0.
\end{align*}
Combining this with the arguments of \cite{bahadur1966} for his Lemma 1 we can deduce \eqref{eqn:sup_Fni(q+x)}. Consequently, we obtain (conditional) convergence \eqref{enu_quantil_conv_perm} of the permutation quantiles given the observations in probability. We want to remind the reader that we can always turn to subsequences to get almost sure convergence instead of convergence in probability. While almost sure convergence is nice to have for the proofs, convergence in probability is usually enough for statistical purposes, as it is in the present situation. Due to Lemma \ref{lem:local_emp_proc_F}, all arguments in the proofs for Lemmas \ref{lem:est_f_perm_consis} and \ref{lem:est_f_kern_perm_consis} are still valid for the underlying local alternatives. In particular, the permutation covariance matrix estimators converge, given the data in probability, to the correct limit. Consequently, \eqref{eqn:theo:stat:perm} holds also for the present local alternatives, given the observations in probability, completing the proof. 

\subsubsection{Proof of Lemma \ref{lem:local_conv_q}}

We again use empirical theory, as already done for the proof of Lemma \ref{lem:emp_proc_F}. Since we discuss here the triangular arrays, we add an index to all introduced empirical measures: ${\mathbb{P}_{ni}}=n_i^{-1}\sum_{j=1}^{n_i}\epsilon_{X_{nij}}$, ${\mathbb{P}_{ni}^\pi}=n_i^{-1}\sum_{j=1}^{n_i}\epsilon_{X_{nij}^\pi}$ and ${\mathbb{P}_{n}}= n^{-1}\sum_{i=1}^k\sum_{j=1}^{n_i}\epsilon_{X_{nij}}$. Moreover, we denote by $P_{ni}$ the distribution of $X_{ni1}$ and by $P_i$ the distribution corresponding to $F_i$. Again, we index all these (empirical) measures by the function class $\mathcal F$.

In the classical sequence situation $X_{nij}=X_{ij}$, we can deduce from $\mathcal F$ being a Donsker class that
\begin{align}\label{eqn:emp_proc_conv_Pni-Pi}
\sqrt{n_i} (\mathbb{P}_{ni} - P_i) \overset{\mathrm d}{\longrightarrow} \mathbb{Z}_i \quad \text{on }l^\infty(\mathcal F),
\end{align}
where $\mathbb{Z}_i$ is a $P_i$-Brownian bridge. Note that $\widehat F_{ni}(t) = \int\mathbf{1}_{(-\infty,t]}\,\mathrm{ d }\mathbb{P}_{ni}$ and, thus,  \eqref{eqn:emp_proc_conv_Pni-Pi} implies distributional convergence of the empirical distribution function. In their Section 2.8.3, \cite{vaartWellner1996} discussed conditions, under which the aforementioned empirical process convergence hold even for triangular arrays. To explicitly state these conditions here, it would require to introduce too much notation. That is why we just explain how the conditions can be justified. From Assumption \ref{ass:local_alt}\label{enu:ass:local_alt_F} and the continuity of $F_i$ we can deduce that $F_{ni}$ converges uniformly to $F_i$ and, thus, (2.8.5) of \cite{vaartWellner1996} holds. The underlying function class $\mathcal F$ has the constant envelope function $G\equiv 1$, i.e. $|f(x)|\leq 1= G(x)$ for all $f\in\mathcal F$. This implies (2.8.6) of \cite{vaartWellner1996}. Consequently, we can apply Theorem 2.8.10 of \cite{vaartWellner1996}; note that the condition therein about the bracketing number follows directly from their Examples 2.5.4 and 2.5.7. Finally,
\begin{align}\label{eqn:emp_proc_conv_Pni-Pni}
\sqrt{n_i} (\mathbb{P}_{ni} - P_{ni}) \overset{\mathrm d}{\longrightarrow} \mathbb{Z}_i \quad \text{on }l^\infty(\mathcal F),
\end{align}
which, in particular, proves  Lemma \ref{lem:local_conv_q}.

\subsubsection{Proof of Lemma \ref{lem:local_emp_proc_F}}
We adapt the notation from the previous proof. Instead of Lemma \ref{lem:local_emp_proc_F}, we prove the empirical process version of it.
\begin{lemma}\label{lem:emp_proc_perm_local_alt}
	Let $\mathbb{G}^\pi_{P}$ be the zero-mean Gaussian process introduced in Lemma \ref{lem:emp_process_perm}. Then given the observations we have in probability:
	\begin{align*}
	n^{1/2}(\mathbb{P}_{n1}^\pi-\mathbb{P}_n, \ldots,\mathbb{P}_{nk}^\pi-\mathbb{P}_n) \overset{\mathrm d}{\longrightarrow} \mathbb{G}^\pi_{P}\quad\text{on }(l^\infty(\mathcal F))^k.
	\end{align*}	
\end{lemma}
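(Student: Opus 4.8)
The plan is to re-run the proof of Lemma~\ref{lem:emp_process_perm} essentially verbatim, the only genuine change being that the underlying data now form the triangular array $X_{nij}\sim F_{ni}$ rather than an i.i.d.\ sequence. The crucial structural observation is that the limiting process $\mathbb{G}^\pi_P$ is unchanged: its covariance \eqref{eqn:gammaij} is governed by the pooled law $P$ attached to $F=\sum_{i=1}^k\kappa_iF_i$, and since Assumption~\ref{ass:local_alt}\eqref{enu:ass:al_alt_F} forces $\sqrt{n}\sup_x|F_{ni}(x)-F_i(x)|\le M$, the $O(n^{-1/2})$ local perturbations wash out in the limit. Hence it suffices to check that every law-of-large-numbers ingredient used in the i.i.d.\ proof still holds for the array, and then to invoke the same tightness and Hoeffding combinatorial CLT arguments.

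First I would re-establish the Glivenko--Cantelli statement \eqref{eqn:perm_Gliv_Can} for the enlarged class $\mathcal G$ of \eqref{eqn:def_G}, now in the form $\sup_{g\in\mathcal G}|\mathbb{P}_ng-Pg|\overset{p}{\rightarrow}0$. Since $\mathcal G$ is a universal Donsker (hence Glivenko--Cantelli) class with bounded envelope, the triangular-array conditions (2.8.5)--(2.8.6) of \cite{vaartWellner1996} hold exactly as verified in the proof of Lemma~\ref{lem:local_conv_q}; Theorem~2.8.10 there then yields $\sup_{g\in\mathcal G}|\mathbb{P}_{ni}g-P_{ni}g|=O_p(n^{-1/2})$, and combining this with $P_{ni}g\to P_ig$ (from the uniform convergence of $F_{ni}$ to $F_i$) and $\mathbb{P}_n=\sum_i(n_i/n)\mathbb{P}_{ni}$ gives the claim. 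The envelope-maximum condition \eqref{eqn:perm_maxg} is trivial here, since the envelope $\widetilde F\equiv1$ of $\mathcal F$ makes $\tfrac1{n_i}\max_j\widetilde F(X_{nij})^2=1/n_i\to0$.

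With these two ingredients in place, the remainder transfers without modification. Each marginal $n^{1/2}(\mathbb{P}_{ni}^\pi-\mathbb{P}_n)$ is asymptotically tight by imitating Theorem~3.7.2 of \cite{vaartWellner1996}, hence so is the vector \citep[Lemma~1.4.3]{vaartWellner1996}, reducing matters to finite-dimensional convergence. A Cram\'er--Wold reduction then brings us to Hoeffding's combinatorial CLT \citep[Theorems~2,~3]{hoeffding1951} for the permutation sum $S_n^\pi$; its two hypotheses \eqref{eqn:lem_perm_max_d} and \eqref{eqn:lem_perm_sum_d} follow from the Glivenko--Cantelli statement above together with $n_i/n\to\kappa_i$, exactly as in the i.i.d.\ case, with $D_n\to\sum_{i,r=1}^k\gamma(i,r)\,P(g_i-Pg_i)(g_r-Pg_r)$ and $\gamma(i,r)=\kappa_i^{-1}\mathbf{1}\{i=r\}-1$.

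I expect the main obstacle to be two-fold, both located in the first step. The substantive point is the triangular-array Glivenko--Cantelli/Donsker passage, which is precisely where Assumption~\ref{ass:local_alt}\eqref{enu:ass:al_alt_F} is needed and where one must cite the array machinery of Section~2.8.3 of \cite{vaartWellner1996} rather than the plain i.i.d.\ results. The technical point is that in the array setting the conditional statements hold only \emph{in probability} given the observations, whereas the i.i.d.\ proof of Lemma~\ref{lem:emp_process_perm} worked \emph{almost surely}; this is handled by the standard subsequence principle---every subsequence admits a further subsequence along which the relevant convergences hold almost surely---so that fixing the observations and running the deterministic tightness-plus-CLT argument along such a subsequence yields the in-probability conditional convergence asserted in Lemma~\ref{lem:emp_proc_perm_local_alt}.
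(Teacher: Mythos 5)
Your overall architecture matches the paper's: a triangular-array Glivenko--Cantelli statement for the class $\mathcal G$ (obtained through Section~2.8.3 / Theorem~2.8.10 of \citet{vaartWellner1996} together with Assumption~\ref{ass:local_alt}\eqref{enu:ass:al_alt_F}), then Cram\'er--Wold plus Hoeffding's combinatorial CLT \citep{hoeffding1951} for the finite-dimensional distributions, and finally the subsequence principle to convert almost-sure conditional statements into the asserted in-probability ones. Those parts of your proposal are sound and coincide with what the paper does; in particular your verification of \eqref{eqn:lem_perm_max_d} and \eqref{eqn:lem_perm_sum_d} from the array Glivenko--Cantelli statement and the trivial envelope bound is correct.

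The gap is in the tightness step. You assert that each marginal $n^{1/2}(\mathbb{P}_{ni}^\pi-\mathbb{P}_n)$ is asymptotically tight ``by imitating Theorem~3.7.2 of \citet{vaartWellner1996}'' and that ``the remainder transfers without modification.'' It does not: the proofs of Theorems~3.7.1 and 3.7.2 there hinge on the unconditional multiplier central limit theorem (their Theorem~2.9.2), which is stated and proved only for an i.i.d.\ sequence $X_{ij}$, not for a triangular array $X_{nij}\sim F_{ni}$. So the imitation that was legitimate in the proof of Lemma~\ref{lem:emp_process_perm} is precisely what breaks here, and this---not the Glivenko--Cantelli passage you single out as the main obstacle---is where the array structure bites hardest. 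The paper repairs it as follows: only the uniform-equicontinuity portion of the proof of Theorem~2.9.2 is actually needed; the inequalities invoked there and in the proof of Theorem~3.7.1, namely Proposition~A.1.9 (Hoeffding's inequality) and Lemmas~3.6.6, 2.9.2 and 2.3.6 of \citet{vaartWellner1996}, remain valid verbatim for triangular arrays; and the required equicontinuity of the permutation processes is then deduced from the equicontinuity of the group-specific processes $\sqrt{n_i}(\mathbb{P}_{ni}-P_{ni})$, which is available exactly because of the triangular-array Donsker theorem already used for Lemma~\ref{lem:local_conv_q}, i.e.\ from \eqref{eqn:emp_proc_conv_Pni-Pni}. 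Without an argument of this kind your tightness claim is unsupported, since the theorem you propose to imitate does not apply in the setting where you need it.
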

\begin{proof}[Proof]
	By \eqref{eqn:emp_proc_conv_Pni-Pni}, Assumption \ref{ass:local_alt}\eqref{enu:ass:al_alt_F} and the continuity of $F_i$, we obtain
	\begin{align*}
	\sup\{ |\mathbb{P}_{ni}f - P_if|: f\in\mathcal F\} = \sup\{ |\widehat F_{ni}(t) - F_i(t)|: t\in\R\} \overset{p}{\rightarrow} 0.
	\end{align*}
	Since $|f|\leq 1$ for all $f\in \mathcal F$ it is easy to see that the aforementioned convergence is still true for $\mathcal F$ replaced by $\mathcal G$ from \eqref{eqn:def_G}. Consequently, we can deduce that in probability
	\begin{align}\label{eqn:perm_Gliv_Can_triangular_arrays}
	\sup\{ |\mathbb{P}_{n} g - Pg|: g\in\mathcal G\} \to 0 .
	\end{align}
	Turning to subsequences, we can assume that \eqref{eqn:perm_Gliv_Can_triangular_arrays} even holds with probability one. Hence, the marginal convergence, given the data, follows as in the proof of Lemma \ref{lem:emp_process_perm}. Consequently, it remains to prove the asymptotic tightness of $(n^{1/2}(\mathbb{P}_i^\pi-\mathbb{P}_n))_{n\in\N}$ given the data, or equivalently uniform equicontinuity \citep[Theorem 1.5.7]{vaartWellner1996}. In the situation of the previous proof, \cite{vaartWellner1996} verified the uniform equicontinuity by combining several inequalities and the unconditional multiplier Theorem 2.9.2, see their proof of Theorem 3.7.1. Note that Theorem 2.9.2 is, in its current version, just valid for the usual setting $X_{nij}=X_{ij}$ and not for general triangular arrays as needed here. But we just need the uniform equicontinuity result from the proof of Theorem 2.9.2, for which again different inequalities were combined. All inequalities from the proofs of Theorem 2.9.2 and 3.7.1, namely Proposition A.1.9 (Hoeffding inequality), Lemmas 3.6.6, 2.9.2 and 2.3.6 (we ordered these inequalities in the order they are needed for the proof) can be directly applied in our more general situation. Finally, the desired equicontinuity can be deduced from the equicontinuity of the processes $(\sqrt{n_i}(\mathbb{P}_{ni} - P_{ni}))_{n\in\N}$, where the latter is an immediate consequence of the process convergence \eqref{eqn:emp_proc_conv_Pni-Pni}.
\end{proof}

\section*{Acknowledgement}
The authors thank the COHORT investigators \citep{richter:ETAL:2011} for providing us their data, which were collected in 5 different studies \citep{study_pelotas,study_adair,study_southafrica,study_INCAP,study_INDIA}. Here, we are especially grateful to Linda Richter for helping us with the communication between all sites. The work of Marc Ditzhaus and Markus Pauly was funded by the \textit{Deutsche Forschungsgemeinschaft} (grant no.  PA-2409 5-1).

\bibliographystyle{plainnat}
\bibliography{sample}

\end{document}